\numberwithin{equation}{section}
\theoremstyle{plain}
\newtheorem{thm}[equation]{Theorem}
\newtheorem*{thm*}{Theorem}
\newtheorem{lem}[equation]{Lemma}
\newtheorem*{lem*}{Lemma}
\newtheorem{prop}[equation]{Proposition}
\newtheorem*{prop*}{Proposition}
\newtheorem{cor}[equation]{Corollary}
\newtheorem*{cor*}{Corollary}
\newtheorem*{claim*}{Claim}
\newtheorem*{conj*}{Conjecture}
\theoremstyle{definition}
\newtheorem{defn}[equation]{Definition}
\newtheorem*{defn*}{Definition}
\newtheorem{notn}[equation]{Notation}
\newtheorem{notconv}[equation]{Notation and Conventions}
\newtheorem{ass}[equation]{Assumption}
\newtheorem*{ass*}{Assumption}
\newtheorem{ques}[equation]{Question}
\theoremstyle{remark}
\newtheorem{rmk}[equation]{Remark}
\newtheorem*{rmk*}{Remark}
\newtheorem{exa}[equation]{Example}
\newtheorem*{exa*}{Example}
\numberwithin{figure}{equation}
\numberwithin{table}{equation}
\def \AA {\mathbb{A}}
\def \CC {\mathbb{C}}
\def \FF {\mathbb{F}}
\def \GG {\mathbb{G}}
\def \QQ {\mathbb{Q}}
\def \RR {\mathbb{R}}
\def \ZZ {\mathbb{Z}}
\def \Acal {\mathcal{A}}
\def \Bcal {\mathcal{B}}
\def \Ecal {\mathcal{E}}
\def \Fcal {\mathcal{F}}
\def \Lcal {\mathcal{L}}
\def \Ocal {\mathcal{O}}
\def \Afr {\mathfrak{A}}
\def \afr {\mathfrak{a}}
\def \mfr {\mathfrak{m}}
\def \pfr {\mathfrak{p}}
\def \Lscr {\mathscr{L}}
\def \Rscr {\mathscr{R}}
\def \hbar {\bar{h}}
\def \Gbf {\mathbf{G}}
\newcommand{\Lrm}{\mathrm{L}}
\DeclareMathOperator{\Hrm}{H}
\DeclareMathOperator{\Krm}{K}
\newcommand{\Frm}{\mathrm{F}}
\newcommand{\RGamma}{\mathrm{R}\Gamma}
\DeclareMathOperator{\Drm}{D}
\DeclareMathOperator{\perf}{perf}
\newcommand{\Dperf}{\Drm^{\perf}}
\DeclareMathOperator{\mrm}{m}
\DeclareMathOperator{\ra}{ram}
\DeclareMathOperator{\lth}{length}
\DeclareMathOperator{\vol}{vol}
\def \riso  {\stackrel{\sim}{\rightarrow}}
\DeclareMathOperator{\Cl}{Cl}
\DeclareMathOperator{\Ig}{Ig}
\DeclareMathOperator{\coker}{coker}
\DeclareMathOperator{\Reg}{Reg}
\DeclareMathOperator{\Res}{Res}
\DeclareMathOperator{\Ind}{Ind}
\DeclareMathOperator{\gen}{gen}
\DeclareMathOperator{\Bch}{Bch}
\DeclareMathOperator{\ch}{ch}
\DeclareMathOperator{\dreg}{-reg}
\newcommand{\preg}{p\dreg}
\DeclareMathOperator{\rk}{rk}
\DeclareMathOperator{\Sel}{Sel}
\newcommand{\BSD}{\mathrm{BSD}}
\DeclareMathOperator{\id}{id}
\newcommand{\coh}{\mathrm{coh}}
\newcommand{\sgn}{\mathrm{sgn}}
\DeclareMathOperator{\Char}{Char}
\DeclareSymbolFont{cyrletters}{OT2}{wncyr}{m}{n}
\DeclareMathSymbol{\Sha}{\mathalpha}{cyrletters}{"58}
\newcommand{\et}{\mathrm{\acute{e}t}}
\newcommand{\an}{\mathrm{an}}
\newcommand{\triv}{\mathbf{1}}
\DeclareMathOperator{\Spec}{Spec}
\newcommand{\alg}{\mathrm{alg}}
\DeclareMathOperator{\Nrd}{Nrd}
\newcommand{\ram}{\mathrm{ram}}
\newcommand{\wild}{\mathrm{wild}}
\newcommand{\bmu}{\boldsymbol{\mu}}
\newcommand{\tors}{\mathrm{tors}}
\DeclareMathOperator{\Frac}{Frac}
\newcommand{\QQp}{\QQ_p}
\newcommand{\ZZp}{\ZZ_p}
\newcommand{\Fp}{\FF_p}
\DeclareMathOperator{\lo}{loc}
\DeclareMathOperator{\Frob}{Frob}
\DeclareMathOperator{\Fr}{Fr}
\newcommand{\ord}{\mathrm{ord}}
\newcommand{\ssim}{\mathrm{ss}}
\DeclareMathOperator{\Gal}{Gal}
\newcommand{\fl}{\mathrm{fl}}
\newcommand{\ari}{\mathrm{ar}}
\DeclareMathOperator{\SC}{SC}
\newcommand{\whSC}{\widehat{\SC}}
\newcommand{\NT}{\mathrm{NT}}
\DeclareMathOperator{\Hom}{Hom}
\DeclareMathOperator{\End}{End}
\DeclareMathOperator{\Lie}{Lie}
\newcommand{\rad}{\mathbf{rad}}
\newcommand{\tf}{\mathrm{tf}}
\DeclareMathOperator{\Ir}{Ir}
\author[W.~Kim]{Wansu Kim}
\address{Department of Mathematical Sciences, KAIST
291 Daehak-ro, Yuseong-gu, Daejeon, 34141, Republic of Korea}
\email{wansu.math@kaist.ac.kr}
\author[K.-S.~Tan]{Ki-Seng Tan}
\email{kisengtan@gmail.com}
\author[F.~Trihan]{Fabien Trihan}
\address{Sophia University,
Department of Information and Communication Sciences
7-1 Kioicho, Chiyoda-ku, Tokyo 102-8554, JAPAN}
\email{f-trihan-52m@sophia.ac.jp}
\author[K.-W. Tsoi]{Kwok-Wing Tsoi}
\address{Department of Mathematics, National Taiwan University, Taipei 10764, Taiwan}
\email{kwokwingtsoi@ntu.edu.tw}
\title[Refined BSD conjecture]{On a Birch and Swinnerton-Dyer type conjecture for the Hasse-Weil-Artin $L$-functions in characteristic $p>0$}
\begin{document}
\keywords{Conjecture of Birch and Swinnerton-Dyer over global function fields, Hasse--Weil--Artin $L$-functions, Geometric equivariant Tamagawa number conjecture, equivariant Riemann--Roch theorem}

\subjclass[2020]{11G40, 14G10, 11J95, 14C40.}
\begin{abstract}
Given an abelian variety $A$ over a global function field $K$ of characteristic $p>0$ and an irreducible complex continuous representation $\psi$ of the absolute Galois group of $K$, we obtain a BSD-type formula for the leading term of Hasse--Weil--Artin $L$-function for $(A,\psi)$ at $s=1$ under certain technical hypotheses. The formula we obtain can be applied quite generally; for example, it can be applied to the $p$-part of the leading term even when $\psi$ is weakly wildly ramified at some place under additional hypotheses. 

Our result is the function field analogue of the work of D.~Burns and D.~Macias~Castillo \cite{BurnsMaciasCastillo:EquivBSD}, built upon the work on the equivariant refinement of the BSD conjecture by D.Burns, M.~Kakde and the first-named author \cite{BurnsKakdeKim:EquivBSDTame}.  To handle the $p$-part of the leading term, we need the Riemann--Roch theorem for equivariant vector bundles on a curve over a finite field generalising the work of S.~Nakajima \cite{Nakajima:GalMod-tame}, B.~K\"ock \cite{Koeck:EquivRR}, and H.~Fischbacher-Weitz and B.~K\"ock \cite{FischbacherWeitz-Koeck:EquivRR}, which is of independent interest. 
\end{abstract}

\maketitle

\tableofcontents

\section{Introduction}
In \cite{Tate:BourbakiBSD}, Tate gave a uniform formulation of the conjecture of Birch and Swinnerton-Dyer (or the \emph{BSD conjecture}) for abelian varieties over global fields of any characteristic. Furthermore, for a Jacobian of a curve over a global function field of characteristic $p>0$, the ``prime-to-$p$ part'' of the full conjecture (including the leading term formula up to $p$-power ambiguity) was obtained assuming finiteness of a certain object closely related to the Tate--Shafarevich group; see \cite[Theorem~5.2]{Tate:BourbakiBSD} for further details. It is now known that for an abelian variety $A$ over a global function field $K$ the full BSD conjecture follows from the finiteness of $\Sha(A/K)\{\ell\}$, the $\ell$-primary part of the Tate--Shafarevich group for some prime $\ell$; see Kato--Trihan \cite{KatoTrihan:BSD} for the precise result, and its introduction for the history. 
We note that the \emph{$p$-part} of the argument in \cite{KatoTrihan:BSD} heavily relies on the theory of $p$-adic cohomology, as anticipated by Tate \cite[p.~438]{Tate:BourbakiBSD}. 

In the work of D.~Burns, M.~Kakde and the first-named author \cite{BurnsKakdeKim:EquivBSDTame}, we formulated an \emph{equivariant refinement} of the BSD conjecture for abelian varieties over a global function field of characteristic $p>0$ in the spirit of the Equivariant Tamagawa Number Conjecture for motives over a number field. Given an abelian variety $A$ over a global function field $K$ and a finite Galois extension $L/K$, the equivariant BSD conjecture \cite[Conjecture~4.3]{BurnsKakdeKim:EquivBSDTame} refines the BSD conjecture for $A/L$ by predicting that the ``\emph{derived} Galois module structure'' of arithmetic invariants of $A/L$ should encode the (suitably normalised) leading terms of Hasse--Weil--Artin $L$-functions at $s=1$ attached to $A$ and complex irreducible characters of $G\coloneqq\Gal(L/K)$, as well as certain ``algebraic relations'' thereof.\footnote{See \cite[Proposition~4.8]{BurnsKakdeKim:EquivBSDTame} for an example of algebraic relations implied by the equivariant BSD conjecture when $L/K$ is a $p$-extension.} (To forumlate the conjecture we need certain \emph{perfect complexes} of integral Galois modules, which we refer to as the ``\emph{derived} Galois module structure''.) 

In this paper, we consider the following natural question. 
\begin{ques}\label{q:main}
    Let $A$ be an abelian variety over a global function field $K$, and we choose a complex irreducible character $\psi$ of the absolute Galois group of $K$. Assuming finiteness of the relevant Tate--Shafarevich group, can we get an \emph{explicit} formula of the (suitably normalised) the leading term of the Hasse--Weil--Artin $L$-function attached to $(A,\psi)$ at $s=1$ (possibly imposing additional hypotheses that are not too restrictive)?
\end{ques}
Note that the equivariant refinement of the BSD conjecture is known up to torsion in relative $\Krm_0$ assuming a suitable finiteness condition on the Tate--Shafarevich group; \emph{cf} \cite[Theorem~4.10]{BurnsKakdeKim:EquivBSDTame}. In fact, ignoring such torsion ambiguity does \emph{not} affect individual leading terms, though we may lose algebraic relations among them. However, \emph{loc.~cit.} does not completely resolve the question; in fact, the resulting formula depends on the ``derived Galois-module structure'' of certain perfect complexes, so it is not explicit enough. (See Corollary~\ref{cor:BKK} for the precise statement.) Nonetheless, one take \cite[Theorem~4.10]{BurnsKakdeKim:EquivBSDTame} and Corollary~\ref{cor:BKK} as a starting point, and manage to extract some non-trivial and explicit formula on the leading term at $s=1$ of $L$-function attached to $(A,\psi)$ under some additional technical hypotheses; \emph{cf.} Assumption~\ref{ass:BC}.

Let us briefly indicate the nature of Assumption~\ref{ass:BC}. Recall that the equivariant BSD conjecture \cite[Conjecture~4.3]{BurnsKakdeKim:EquivBSDTame} involves two perfect complexes: a kind of Selmer complex for $A/L$, and a coherent cohomology of a certain equivariant vector bundle. Our additional hypotheses are mainly to simplify the ``Selmer complex term'', clearly inspired by the number field analogue of our result obtained by D.~Burns and D.~Macias~Castillo (\emph{cf.}~\cite{BurnsMaciasCastillo:EquivBSD}, especially the set of hypotheses at the beginning of \S6). Our main work is to control the ``coherent cohomology term'' (or rather, the \emph{ramification correction} to the local volumes, so to speak) under a \emph{mild} hypothesis -- namely, Assumption~\ref{ass:BC}(\ref{ass:BC:p}) -- and thereby obtain a formula for the \emph{$p$-part} of the leading term in a satisfying generality. If $A$ has semistable reduction at all places of $K$, then our main result can be applied if $\psi$ has tame ramification at worst (or even, we allow ``shallow wild ramification'') assuming finiteness of a suitable Tate--Shafarevich group.

Let us set up the notation for more detailed introduction. In the setting of Question~\ref{q:main}, let $L/K$ be a finite Galois extension with $G\coloneqq\Gal(L/K)$ such that $\psi$ factors through $G$. Suppose that $\psi$ can be defined over a number field $E\subset \CC$, and we fix the underlying $E$-vector space $V_\psi$ for the representation $\psi$. Let $Z$ be the set of places of $K$ consisting exactly of the places ramified in $L/K$ and the bad reduction places for $A$.

We consider the Hasse--Weil--Artin $L$-function $L_U(A,\psi,s)$ without Euler factors at $Z$ as in \eqref{eq:L-ftn}, where $U$ denotes the set of places of $K$ away from $Z$. We normalise its leading term $\Lscr_U(A,\psi)$ at $s=1$ as \eqref{eq:rk-LT} so that we have $\Lscr_U(A,\psi)\in E^\times$. In particular, for any place $\lambda$ of $E$ it makes sense to consider the $\lambda$-adic valuation $v_\lambda\big(\Lscr_U(A,\psi)\big)$.

If $\lambda$ is a place over $\ell$ coprime to $|G|$ (which applies to all but finitely many places of $E$), then $v_\lambda\big(\Lscr_U(A,\psi)\big)$ is quite easy to describe since the group ring $\ZZ_\ell G$ is rather simple in terms of homological algebra; \emph{cf.} Proposition~\ref{prop:main-c-t}. For a place $\lambda$ of $E$ over a prime $\ell$ dividing $|G|$ however, one cannot really expect an explicit description of $v_\lambda\big(\Lscr_U(A,\psi)\big)$ without imposing an additional assumption for $(A,\psi)$ to simplify the homological algebra involved. And unsurprisingly, it requires much harder extra work to handle places $\lambda$ over $p$, the characteristic of $K$, when $p$ divides $|G|$. 
\begin{thm}[{\emph{Cf.}~Theorem~\ref{th:main}}]\label{th:intro}
    In addition to the above setting, suppose that $\Sha(A/L)$ is finite. Choose a prime $\ell$ that does not divide any of $|A(L)_{\tors}|$, $|A^t(L)_{\tors}|$ and $|\Acal_L(k_w)|$, where $\Acal_L$ is the N\'eron model of $A/L$ and $k_w$ is the residue field at a place $w$ of $L$ above some $v\in Z$. If $\ell = p$ then we assume that $A$ has semistable reduction at all places of $K$ and the extension $L/K$ is at worst weakly ramified at each place in the sense of Definition~\ref{def:weak-ram}. Then for any place $\lambda$ of $E$ above $\ell$ we have 
        \[
        \Lscr_U(A,\psi)\Ocal_{E,\lambda} = \vol_Z(A/K)^{\deg\psi}\cdot \lo_{Z_L}(A,\psi) \cdot \frac{\Reg^\psi_\lambda}{|G|^{r_{\alg}(\psi)}}\cdot\Char_{\psi}\big(\Sha^\vee_{\psi,\lambda}(A/L)\big),
        \]
        where $\Ocal_{E,\lambda}$ is the $\lambda$-adic completion of $\Ocal_E$, $\Reg^\psi_\lambda$ is the $\psi$-twisted regulator for $A$ (\emph{cf.} Definition~\ref{def:psi-ht}) and $r_{\alg}(\psi)$ is the rank of the ``$\psi$-part of $A(L)$'' \eqref{eq:psi-MW-rk}. Lastly, $\Char\big(\Sha^{\vee}_{\psi,\lambda}(A/L)\big)$ is the characteristic ideal \eqref{eq:char-ideal} of $\Sha^\vee_{\psi,\lambda}(A/L)$ \eqref{eq:Sha-psi}, and $\vol_Z(A/K)$ and $\lo_{Z_L}(A,\psi)$ are $p$-power integers defined in Theorem~\ref{th:main}.
\end{thm}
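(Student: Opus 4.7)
The plan is to deduce the formula from the ``up to $\Krm_0$-torsion'' version of the equivariant BSD conjecture recorded in Corollary~\ref{cor:BKK}, which identifies the class of $\Lscr_U(A,\psi)$ with an equivariant Euler characteristic built out of two perfect complexes of $\ZZ_\ell G$-modules: a Selmer-type complex encoding the Mordell--Weil groups $A(L)$, $A^t(L)$ and the $\ell$-part of $\Sha(A/L)$; and the coherent cohomology complex of a certain equivariant vector bundle on the smooth projective model of $L$ arising from the N\'eron model $\Acal_L$. After completing at $\lambda$ and passing to the $\psi$-isotypical component, the right-hand side splits as the product of a ``Selmer contribution'' and a ``coherent cohomology contribution'', and the task is to identify each with the corresponding factor in the statement.

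First I would treat the Selmer contribution. Using finiteness of $\Sha(A/L)$ together with the coprimality hypotheses on $\ell$ (namely that $\ell$ divides none of $|A(L)_{\tors}|$, $|A^t(L)_{\tors}|$, $|\Acal_L(k_w)|$), the Selmer complex becomes quasi-isomorphic, after $\lambda$-adic completion, to a two-term complex in degrees $1$ and $2$ whose cohomology realises the free parts of the Mordell--Weil groups and the $\ell$-primary Tate--Shafarevich group. Taking the $\psi$-component, the differential (given by the N\'eron--Tate height pairing, suitably twisted) contributes exactly $\Reg^\psi_\lambda/|G|^{r_{\alg}(\psi)}$ to the equivariant determinant, while the cokernel contributes $\Char_{\psi}\big(\Sha^\vee_{\psi,\lambda}(A/L)\big)$. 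This step mirrors the number field argument of~\cite{BurnsMaciasCastillo:EquivBSD}; the denominator $|G|^{r_{\alg}(\psi)}$ comes from the mismatch between the natural integral lattice on the $\psi$-isotypical component and the lattice on which the height pairing is classically normalised.

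The main obstacle lies in the coherent cohomology contribution. When $\ell \nmid |G|$ one proceeds via Proposition~\ref{prop:main-c-t}: projectivity of $\Ocal_{E,\lambda}G$-modules trivialises the equivariant Euler characteristic down to the classical volume $\vol_Z(A/K)^{\deg\psi}$, and the $p$-power $\lo_{Z_L}(A,\psi)$ contributes trivially at such $\lambda$. When $\ell \mid |G|$ but $\ell \ne p$, a similar homological simplification applies after using $\ell \nmid |\Acal_L(k_w)|$ to handle the bad-reduction Euler factors. The genuinely hard case is $\lambda \mid p$, where one must compute the $\psi$-component of an equivariant Euler characteristic of a coherent sheaf on the smooth projective curve attached to $L$ with its $G$-action. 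This is precisely what the equivariant Riemann--Roch theorem developed earlier in the paper -- generalising Nakajima, K\"ock, and Fischbacher-Weitz--K\"ock to the weakly (wildly) ramified setting -- is designed to handle: under the semistable reduction and weak ramification hypotheses, the local ramification correction at each place in $Z$ can be made fully explicit. Identifying this correction with $\lo_{Z_L}(A,\psi)$ while matching the remaining part with $\vol_Z(A/K)^{\deg\psi}$ then finishes the proof. The most delicate matching step is ensuring that the two ``correction factors'' produced by Riemann--Roch agree with the naturally defined $p$-power integers $\vol_Z(A/K)$ and $\lo_{Z_L}(A,\psi)$ in the statement; this is where the weak ramification hypothesis (rather than full tameness) is essential, since it is precisely the condition that keeps the relevant local cotangent modules cohomologically manageable for the equivariant Riemann--Roch machinery.
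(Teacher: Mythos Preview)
Your overall strategy matches the paper's: invoke Corollary~\ref{cor:BKK} to write $v_\lambda(\Lscr_U(A,\psi)) = \chi^{\BSD}_{Z_L,\lambda}(A,\psi) - \chi^{\coh}_{Z_L,\lambda}(A,\psi)$, compute the Selmer term via the two-term reduction (Lemma~\ref{lem:SC-two-term} and Proposition~\ref{prop:BC}), and compute the coherent term via the equivariant Riemann--Roch package (Corollary~\ref{cor:psi-part} together with Corollary~\ref{cor:descent-Neron}); the semistable-everywhere hypothesis forces $Z_2=\emptyset$, which collapses the volume factor to $\vol_Z(A/K)^{\deg\psi}$.

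However, your third paragraph is organised around the wrong dichotomy and misattributes the content of Proposition~\ref{prop:main-c-t}. The coherent term $\chi^{\coh}_{Z_L,\lambda}(A,\psi)$ is \emph{by definition} zero for every $\ell\ne p$ (see the convention just before Corollary~\ref{cor:BKK}); there is nothing to ``trivialise'' and no appeal to Proposition~\ref{prop:main-c-t} or to $\ell\nmid|\Acal_L(k_w)|$ is needed there. Proposition~\ref{prop:main-c-t} is a statement about the \emph{Selmer} term in the special case $\ell\nmid|G|$, and under the hypotheses of Theorem~\ref{th:intro} it is superseded by Proposition~\ref{prop:BC}, which handles the Selmer term uniformly for all admissible $\ell$ (including $\ell\mid|G|$). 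The only genuine case distinction is $\ell=p$ versus $\ell\ne p$, and it occurs solely in the coherent term. Also a minor slip: the Selmer complex sits in degrees $[0,1]$, not $[1,2]$.
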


    We actually obtain a result in a more general setting where $A$ has semistable reduction at all places of $L$ (instead of $K$),  $L/K$ is at worst weakly ramified at each place, and $L/K$ is tamely ramified at all places of $K$ where $A$ has non-semistable reduction. The formula becomes  more complicated in this generality, and we refer to the main body of the text.

    If we choose $L=K$ (so $\psi$ is the trivial character and $E=\QQ$), then Theorem~\ref{th:intro} is compatible with the $\ell$-part of the classical BSD formula \cite[(1.8.1)]{KatoTrihan:BSD}; indeed, in the setting of of Theorem~\ref{th:main}  we have $\lo_{Z_L}(A,\psi) = 1$ and  $\vol_{Z}(A/K)$ is the $p$-part of $\vol\big(\prod_{v\in Z}A(K_v)\big)$, using the notation of \emph{loc.~cit.} In general, the $p$-power integer $\lo_{Z_L}(A,\psi)$, given by an explicit local formula, can be thought of as the ``ramification correction'' to the volume term. In fact, we have $\lo_{Z_L}(A,\psi)=1$ if the ramification index of $L/K$ at each place is a power of $p$ (including the case where $L/K$ is unramified everywhere). If the ramification index of $L/K$ at each place divides $p-1$ and $\deg\psi = 1$ then we have 
    \[
    \log_p\big(\lo_{Z_L}(A,\psi)\big)= \frac{\dim A}{|G|}\sum_w j_{w,\psi}[k_w:\FF_p],
    \]
    where $w$ runs through all places of $L$ ramified over $K$, and $j_w$ is determined so that the inertia subgroup at $w$ acts on $\mfr_w^{j_w}/\mfr_w^{j_w+1}$ via  the restriction of $\psi$,  where $\mfr_w$ is the maximal ideal of $\Ocal_{X_L}$ corresponding to $w$. (See Remarks~\ref{rmk:main-thm-simplified} and \ref{rmk:ra-psi} for further details.) We believe that the explicit formula for $\lo_{Z_L}(A,\psi)$ is new even when $L/K$ is cyclic and tame.
    
    We note that for abelian varieties defined over a number field, an analogous result was obtained by D.~Burns and D.~Macias~Castillo (\emph{cf.} \cite[Proposition~7.3]{BurnsMaciasCastillo:EquivBSD}), which clearly inspired our result.

    Let us now list some of the main ingredients of the proof. Let $\pi\colon X_L\to X$ denote the covering of smooth projective curves corresponding to $L/K$, and let $Z_L\subset X_L$ be the closed subset consisting of places over $Z$. Let $\Acal_L$ be the N\'eron model over $X_L$ of $A/L$. 
    

    Let us specialise to the case where $\ell = p$, which is the main case of interest. To apply \cite[Theorem~4.10]{BurnsKakdeKim:EquivBSDTame} one needs to choose a suitable $G$-stable subbundle $\Lcal\subseteq\Lie(\Acal_L)(-Z_L)$ such that $\RGamma(X_L,\Lcal)$ is a perfect $\FF_p G$-complex. 
    The choice of such $\Lcal$ could a priori be very inexplicit, but we show that we can take $\Lcal = \Lie(\Acal_L)(-Z_L)$ provided that the following conditions are satisfied; \emph{cf.} Assumption~\ref{ass:BC}(\ref{ass:BC:p}).
    \begin{ass}\label{ass:intro}
    \begin{itemize}
        \item $L/K$ is weakly ramified at all places (\emph{cf.} Definition~\ref{def:weak-ram}), and
        \item if $L/K$ is wildly ramified at a place $v$ of $K$ then $A$ has semistable reduction at $v$.
    \end{itemize}    
    \end{ass}
    See Corollary~\ref{cor:descent-Neron}(\ref{cor:descent-Neron:perf}) and Proposition~\ref{prop:c-t-SC}(\ref{prop:c-t-SC:p}) for the precise statement. Note that the main ingredient of the proof is K\"ock's local integral normal basis theorem for weakly ramified extensions \cite[Theorem~1.1]{Koeck:EquivRR}, which we recall in Theorem~\ref{th:int-normal-basis}.

    Suppose that Assumption~\ref{ass:intro} is valid, and set $\Lcal = \Lie(\Acal_L)(-Z_L)$. Then we get a $p$-adic Selmer complex with support condition  $\SC_{Z_L,p}(A,L/K)\in \Dperf(\ZZ_p G)$ following the construction of \cite[Proposition~3.7(i)]{BurnsKakdeKim:EquivBSDTame}, and under the assumption of Theorem~\ref{th:intro} one can compute its ``$\psi$-isotypic parts'' by the same argument as \cite[Proposition~7.3(ii)]{BurnsMaciasCastillo:EquivBSD}; \emph{cf.} Proposition~\ref{prop:BC}.

     By the equivariant BSD conjecture modulo torsion, it remains to compute the $\psi$-isotypic part of $\RGamma(X_L,\Lie(\Acal_L)(-Z_L))^\vee$. This can be achieved using the following result.
    \begin{thm}[\emph{Cf.} {Theorem~\ref{th:EquivRR}, Corollary~\ref{cor:descent-Neron}}]\label{th:Intro-RR}
    Set $\Lcal\coloneqq \Lie(\Acal_L)(-Z_L)$ and view it as a $G$-equivariant vector bundle on $X_L$.
    If Assumption~\ref{ass:intro} is satisfied,  then $\RGamma(X_L,\Lcal)$ can be represented by a two-term complex $[C^0\to C^1]$ for some projective $\FF_pG$-modules $C^0,C^1$. Furthermore, we have an explicit formula for 
        \[[C^0]-[C^1] - \chi(\Lcal^G)[\FF_pG] \in \Krm_0(\FF_p G)\]
    in terms of the inertia action on the completed stalk $\widehat\Lcal_w$ at each $w\in Z_L$. (Here, $\chi(\Lcal^G) = \log_p(|\Hrm^0(X,\Lcal^G)| / |\Hrm^1(X,\Lcal^G)|)$.)
    \end{thm}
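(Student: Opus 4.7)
The plan is to combine Köck's local integral normal basis theorem for weakly ramified extensions (Theorem~\ref{th:int-normal-basis}) with an equivariant Riemann--Roch argument in the spirit of Nakajima, Köck and Fischbacher-Weitz--Köck, using the semistability hypothesis at wildly ramified places to control the local structure of $\Lie(\Acal_L)$.

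First I would verify that $\Lcal \coloneqq \Lie(\Acal_L)(-Z_L)$ is genuinely a $G$-equivariant vector bundle on $X_L$: the $G$-action is inherited from the Galois action on $A/L$ via the universal property of the Néron model, and $Z_L$ is $G$-stable. The key local step is to show that the completed stalk $\widehat\Lcal_w$ at each $w\in Z_L$ is a free module over $\widehat\Ocal_{X,\pi(w)}[I_w]$, where $I_w\subseteq G_w$ is the inertia subgroup. Under Assumption~\ref{ass:intro}, $L/K$ is weakly ramified at $w$, so Theorem~\ref{th:int-normal-basis} exhibits the maximal ideal $\mfr_w$ as $\widehat\Ocal_{X,\pi(w)}[I_w]$-free. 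Since wild ramification is only allowed at places where $A$ has semistable reduction, at such $w$ the sheaf $\Lie(\Acal_L)$ locally arises from the tangent space of a semiabelian variety, which admits an $I_w$-stable filtration with trivial graded pieces; this allows one to propagate Köck's freeness from the line bundle $\mfr_w$ to the rank-$\dim A$ bundle $\widehat\Lcal_w$.

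Granted local $G$-cohomological triviality at the ramified points, $\pi_*\Lcal$ is locally $\Ocal_X[G]$-free near $\pi(Z_L)$; away from $Z_L$ the cover $\pi$ is étale so $\pi_*\Lcal$ is again automatically locally $\Ocal_X[G]$-free. Hence $\RGamma(X_L,\Lcal)\cong\RGamma(X,\pi_*\Lcal)$ is a perfect $\FF_pG$-complex, and since $\dim X_L=1$ its cohomology is concentrated in degrees $0,1$, so a standard truncation represents it by a two-term complex $[C^0\to C^1]$ of finitely generated projective $\FF_pG$-modules.

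For the explicit formula I would apply an equivariant Riemann--Roch-style decomposition of $[\pi_*\Lcal]$ in the equivariant $\Krm_0$, splitting it into a ``global'' term given by $\chi(\Lcal^G)[\FF_pG]$ and a sum of local correction terms indexed by $w\in Z_L$. The correction at $w$ is determined by the $I_w$-representations on the graded pieces of $\widehat\Lcal_w$ for the $\mfr_w$-adic filtration, which gives the claimed formula in terms of the inertia action on the completed stalks. The main obstacle is the first step: lifting Köck's line-bundle freeness to the higher-rank bundle $\Lie(\Acal_L)(-Z_L)$ in the wildly ramified case. The semistability hypothesis is exactly what is needed to reduce this higher-rank statement to Köck's line-bundle theorem; without it the wild inertia action on $\Lie(\Acal_L)_w$ could be too intricate for Köck's argument to extend in the required form.
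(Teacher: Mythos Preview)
Your overall architecture is right, and the perfectness argument (local freeness of $\pi_*\Lcal$ implies $\RGamma(X_L,\Lcal)\in\Dperf(\FF_pG)$, then truncate to a two-term complex) matches the paper. But the key local step has a gap.

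You argue that at a wildly ramified place $w$ where $A$ has semistable reduction, ``$\Lie(\Acal_L)$ locally arises from the tangent space of a semiabelian variety, which admits an $I_w$-stable filtration with trivial graded pieces''. This is not how the argument goes, and it is not clear that such a filtration exists or suffices. The semiabelian structure of the special fibre gives you a filtration on $\Lie(\Acal_{L,k_w})$, but it tells you nothing a priori about how inertia acts on the pieces, and it does not obviously lift to a filtration of $\widehat{\Lie(\Acal_L)}_w$ by copies of $\Ocal_w$. The actual point is more direct and more specific to N\'eron models: when $A$ has semistable reduction at $v=\pi(w)$, the connected N\'eron model commutes with the base change $\Ocal_v\to\Ocal_w$ (this is \cite[\S7.4, Corollary~4]{BoschLuetkebohmertRaynaud:NeronModels}, used in Proposition~\ref{prop:descent-conn-Neron-models}(\ref{prop:descent-conn-Neron-models:sst})). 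Hence $\Lie(\Acal_L)(\Ocal_w)\cong\Lie(\Acal)(\Ocal_v)\otimes_{\Ocal_v}\Ocal_w$, and the $I_w$-action is entirely through the second factor. Twisting by $-Z_L$ gives $\widehat\Lcal_w\cong\mfr_w^{\oplus\dim A}$ as a semilinear $I_w$-representation, and now K\"ock's theorem applies termwise. No filtration is needed; the higher-rank case is a direct sum of the rank-one case precisely because of this base-change property.

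Your sketch for the explicit formula is too vague to be a proof. Saying that $[\pi_*\Lcal]$ ``splits into a global term and local corrections'' is the \emph{shape} of the answer, not a method for obtaining it. The paper's proof (Theorem~\ref{th:EquivRR}(\ref{th:EquivRR:formula})) proceeds by Brauer characters: reduce to $k=\bar k$, then for each $g\in G^{\preg}\setminus\{1\}$ restrict to the cyclic tame subgroup $\langle g\rangle$ and invoke Nakajima's tame formula (Lemma~\ref{lem:Nakajima}); this pins down $\chi_{kG}(\Ecal)$ modulo $\ZZ\cdot[kG]$ via Lemma~\ref{lem:st-eq}, and the coefficient of $[kG]$ is then determined by taking $G$-invariants. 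You should at least indicate what replaces character theory when $p\mid|G|$.
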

    The precise formula is quite complicated, and we refer to the main body of the text.

    The statement can be divided into two steps. 
    By analysing the N\'eron models over $X_L$ and $X$, we deduce a certain local property of $\Lcal$ in terms of ramification at each $w\in Z_L$ (\emph{cf.} Corollary~\ref{cor:descent-Neron}). And for $G$-equivariant vector bundles on $X_L$ satisfying the same local property satisfied by $\Lcal$, we prove a kind of ``equivariant Riemann--Roch theorem''; \emph{cf.} Theorem~\ref{th:EquivRR}.
    When $X_L\to X$ is a \emph{tame} $G$-cover of curves over an algebraically closed field $k$, then the Euler characteristic of a $G$-equivariant vector bundle $\Lcal$ in $\Krm_0(kG)$ was computed modulo $[kG]$  by S.~Nakajima \cite[Theorem~2]{Nakajima:GalMod-tame}.
    The rank-$1$ case of the equivariant Riemann--Roch theorem was obtained by Fischbacher-Weitz and K\"ock \cite[\S3, Theorem~12]{FischbacherWeitz-Koeck:EquivRR} (built upon the case of curves over algebraically closed field \cite[Theorem~4.5]{Koeck:EquivRR}). We give a common generalisation of these arguments to obtain the equivariant Riemann--Roch theorem sufficient for the proof of our main result, Theorem~\ref{th:intro}.

    By Theorem~\ref{th:Intro-RR} we can compute the $\psi$-isotypic part of $\RGamma(X_L,\Lcal)^\vee$, and compare it with the volume term and $\lo_{Z_L}(A,\psi)$. In case where $A$ admits non-semistable reduction at some place of $K$, the volume term needs to be corrected by analysing the behaviour of N\'eron models over tame extensions; \emph{cf.} Proposition~\ref{prop:descent-Neron} and Theorem~\ref{th:main}.

    Let us outline the contents of the paper. In \S\ref{sec:local} we collect various results for semilinear representations of decomposition groups, including K\"ock's local integral normal basis theorem. In \S\ref{sec:EquivRR} we formulate and prove the ``Riemann--Roch theorem for equivariant vector bundles'' (\emph{cf.} Theorem~\ref{th:EquivRR}). In \S\ref{sec:K-thy} we review the relative $K_0$-groups and re-interpret Theorem~\ref{th:EquivRR} using relative $K_0$-group. In \S\ref{sec:Neron} we collect various results on N\'eron models (including the behaviour under tame ramification) and show that the equivariant Riemann--Roch theorem can be applied to $\Lie(\Acal_L)(-Z_L)$ under Assumption~\ref{ass:intro}. In \S\ref{sec:BKK} we review the equivariant refinement of the BSD conjecture in \cite{BurnsKakdeKim:EquivBSDTame}, and in \S\ref{sec:main} we give a proof of the main theorem (\emph{cf.} Theorem~\ref{th:main}). In \S\ref{sec:exa} we give some examples in which our main theorem can be applied unconditionally.

\begin{notconv}For any commutative ring $R$ (necessarily with $1$) and for any group $G$, we let $RG$ denote the group ring of $G$ over $R$. We may write $R[G]$ for $RG$ if there is any risk of confusion.

By a \emph{$G$-representation} $\psi$, we mean a finite-dimensional $\CC$-linear $G$-representation. Let $V_\psi$ denote the (left) $\CC G$-module underlying $\psi$. As a standard fact, there exists an $EG$-module $V_{\psi,E}$ for some number field $E\subset\CC$ such that we have a $\CC G$-isomorphism $\CC\otimes_E V_{\psi,E}\cong V_\psi$. We will also use $V_\psi$ to refer to $V_{\psi,E}$ if there is no risk of confusion. 

By $T_\psi = T_{\psi,\Ocal_E}$, we denote a (chosen) $G$-stable $\Ocal_E$-lattice in $V_\psi$. By abuse of notation, we also let $\psi$ denote the ring homomorphisms $\ZZ G\to \End_{\Ocal_E}(T_\psi)$ and $\QQ G\to \End_E(V_\psi)$ defined by the $G$-action on $T_\psi$ and $V_\psi$.

We write $(-)^\ast$ for the linear dual, and $(-)^\vee$ for the Pontryagin dual. For $T_\psi$ as above, we regard $T_\psi^\ast$ and $T_\psi^\vee$ as \emph{right} $\Ocal_EG$-modules. We write $\check\psi$ for the contragredient of $\psi$.

For any ring $A$ (necessarily with $1$ but not necessarily commutative), we let $\Drm(A)$ denote the derived category of complexes of $A$-modules, and $\Dperf(A)$ for the triangulated full subcategory of perfect complexes of $A$-modules. 
For any $C^\bullet \in \Dperf(A)$, we define its \emph{Euler characteristic} as follows:
\begin{equation}\label{eq:Euler-K0}
    \chi_A(C^\bullet)\coloneqq \sum_i(-1)^i[C^i] \in \Krm_0(A),
\end{equation}
where $\Krm_0(A)$ is the Grothendieck group of the category of finitely generated projective $A$-modules and $[C^i]\in \Krm_0(A)$ denote the class of $C^i$.
\end{notconv}

\section{Review of local integral normal basis theorems}\label{sec:local}
In this section, we collect various standard results on lattices in semilinear Galois modules for finite extensions of local fields, following Chinburg \cite{Chinburg:dR-tame} and K\"ock \cite{Koeck:EquivRR}.

Let $K_v$ be a complete discrete valuation field with perfect residue field $k_v$ of characteristic $p>0$. Let $\Ocal_v$ and $\mfr_v$ respectively denote the valuation ring and its maximal ideal.
We fix a finite Galois extension $L_w/K_v$ with valuation ring $\Ocal_w$, maximal ideal $\mfr_w$, and residue field $k_w$. Set $G_w\coloneqq\Gal(L_w/K_v)$, and write $I_w$ and $P_w$ for the inertia and wild inertia subgroups, respectively. (Although the results in this section are purely local, we will later apply them in the setting where $L_w/K_v$ arises from some global extension $L/K$ via completing at $w \mid v$.)

By \emph{semilinear $G_w$-representation over $\Ocal_w$}, we mean a finite free $\Ocal_w$-module $W_w$ equipped with \emph{semilinear} $G_w$-action.
\begin{lem}\label{lem:loc-freeness}
    For a semilinear $G_w$-representation $W_w$ over $\Ocal_w$ the following are equivalent.
    \begin{enumerate}
        \item\label{lem:loc-freeness:free} $W_w$ is free as an $\Ocal_v[G_w]$-module;
        \item\label{lem:loc-freeness:proj} $W_w$ is projective as an $\Ocal_v[G_w]$-module;
        \item\label{lem:loc-freeness:ct} $W_w$ is cohomologically trivial for $G_w$ (i.e., the Tate cohomology $\widehat\Hrm^i(G,W_w)$ is trivial for each degree $i$).
    \end{enumerate}
\end{lem}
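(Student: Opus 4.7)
The plan is to prove the cycle $(\ref{lem:loc-freeness:free})\Rightarrow(\ref{lem:loc-freeness:proj})\Rightarrow(\ref{lem:loc-freeness:ct})\Rightarrow(\ref{lem:loc-freeness:free})$. The first implication is tautological. For the second, I would observe that $\Ocal_v[G_w]\cong\Ind_{\{1\}}^{G_w}\Ocal_v$ is induced from the trivial subgroup, hence its Tate cohomology vanishes in every degree; this vanishing is inherited by direct summands, so every projective $\Ocal_v[G_w]$-module is cohomologically trivial.

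The substantive direction is $(\ref{lem:loc-freeness:ct})\Rightarrow(\ref{lem:loc-freeness:free})$, which I would split into a projectivity step and a rigidity step. For the projectivity step, since $\Ocal_w$ is $\Ocal_v$-free, the module $W_w$ is $\Ocal_v$-free; combining this with the assumed vanishing of $\hat H^{\ast}(G_w,W_w)$, the classical Rim--Nakayama criterion (cf.~Serre's \emph{Local Fields}, Ch.~IX) for projectivity of $\Ocal_v$-free $G_w$-modules gives that $W_w$ is $\Ocal_v[G_w]$-projective. A technical caveat is that this criterion usually demands cohomological triviality on each Sylow $p$-subgroup $P$ of $G_w$; one propagates the hypothesis from $G_w$ to $P$ using the transfer identity $\mathrm{cor}\circ\mathrm{res}=[G_w:P]$ on Tate cohomology together with the coprimality of $[G_w:P]$ and the residue characteristic, which pin down the $p$-primary part of $\hat H^\ast(P,W_w)$ from that of $\hat H^\ast(G_w,W_w)$.

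For the rigidity step, I would exploit the semilinear structure. The normal basis theorem for $L_w/K_v$ yields $L_w\cong K_v[G_w]$ as $K_v[G_w]$-modules, whence $K_v\otimes_{\Ocal_v}W_w\cong K_v[G_w]^n$ with $n=\rk_{\Ocal_w}W_w$. Since $\Ocal_v$ is a complete DVR, $\Ocal_v[G_w]$ is semilocal, finitely generated $\Ocal_v[G_w]$-modules satisfy Krull--Schmidt, and Swan's theorem ensures that the natural map $\Krm_0(\Ocal_v[G_w])\to\Krm_0(K_v[G_w])$ is injective. These facts together pin down the multiplicities of the indecomposable projective summands of $W_w$ by comparison with those of $\Ocal_v[G_w]^n$, forcing $W_w\cong\Ocal_v[G_w]^n$.

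The hardest point will be the Sylow reduction inside the projectivity step: one must justify carefully that $G_w$-cohomological triviality propagates to each Sylow subgroup, which is where the complete-DVR structure of $\Ocal_v$ and the invertibility of the Sylow indices in the relevant torsion of Tate cohomology enter in an essential way. Once projectivity is secured, the passage to freeness via Krull--Schmidt and Swan-type rigidity is routine.
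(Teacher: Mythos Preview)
Your overall architecture matches the paper's: the paper proves (\ref{lem:loc-freeness:free})$\Leftrightarrow$(\ref{lem:loc-freeness:proj}) by noting that $W_w\otimes_{\Ocal_w}L_w$ is $K_v[G_w]$-free (Galois descent/normal basis) and then invoking Swan's rigidity result, and treats (\ref{lem:loc-freeness:proj})$\Leftrightarrow$(\ref{lem:loc-freeness:ct}) as standard via the projectivity criterion of Serre, \emph{Local Fields}, Ch.~IX, \S5, Theorem~7. Your cycle uses exactly these ingredients.

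There is, however, a genuine problem with the step you flagged as hardest. The parenthetical in the statement is imprecise: the projectivity criterion you cite requires cohomological triviality in the standard sense, namely $\widehat\Hrm^i(H,W_w)=0$ for \emph{every} subgroup $H\leqslant G_w$, and the paper's footnote citing Serre confirms this is the intended meaning. With that reading there is nothing to propagate. If instead you take the parenthetical literally, your proposed propagation via $\mathrm{cor}\circ\mathrm{res}=[G_w:P]$ does not do what you claim: this identity is an endomorphism of $\widehat\Hrm^\ast(G_w,W_w)$, so from $\widehat\Hrm^\ast(G_w,W_w)=0$ it yields no information about $\widehat\Hrm^\ast(P,W_w)$. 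What one \emph{can} say is that for a Sylow $\ell$-subgroup $P$ with $\ell\ne p$ the order $|P|$ is a unit in $\Ocal_v$, whence $\widehat\Hrm^\ast(P,W_w)=0$ automatically; but for the Sylow $p$-subgroup there is no such shortcut, and one really needs the subgroup hypothesis. So drop the propagation argument and read ``cohomologically trivial'' in the standard way.
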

\begin{proof}
    Note that $W_w\otimes_{\Ocal_w}L_w$ is free as an $K_v[G_w]$-module by standard Galois descent, so the equivalence of (\ref{lem:loc-freeness:free}) and (\ref{lem:loc-freeness:proj}) follows from \cite[Corollary~6.4]{Swan:Induced}. The equivalence between (\ref{lem:loc-freeness:proj}) and (\ref{lem:loc-freeness:ct}) is standard as $W_w$ is projective as $\Ocal_w$-module.\footnote{The proof of the $\ZZ G$-projectivity criterion \cite[Chap~IX, \S5, Theorem~7]{Serre:LocalFields} can be repeated to show the $RG$-projectivity criterion for any Dedekind domain $R$. This is also implicitly proved in \cite[Proposition~4.1]{Chinburg:dR-tame}.}
\end{proof}

Let us now recall the following ``higher-rank version'' of the local integral normal basis theorem in the tame setting, which is essentially due to Chinburg.
\begin{prop}\label{prop:Noether}
    Let $W_w$ be a semilinear $G_w$-representation over $\Ocal_w$. Then $W_w$ is free as an $\Ocal_v[G_w]$-module if and only if it is cohomologically trivial for $I_w$. In particular, if $L_w/K_v$ is tame then any semilinear $G_w$-representation over $\Ocal_w$ is free as an $\Ocal_v[G_w]$-module.
\end{prop}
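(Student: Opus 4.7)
The plan is to rely throughout on Lemma~\ref{lem:loc-freeness} (and its analogue applied to Galois subextensions of $L_w/K_v$) to shuttle freely between the three equivalent properties ``free over the group ring'', ``projective over the group ring'', and ``cohomologically trivial''. One direction is immediate: if $W_w$ is $\Ocal_v[G_w]$-projective, then restriction along $\Ocal_v[I_w]\hookrightarrow\Ocal_v[G_w]$ (a free ring extension) keeps it projective, hence cohomologically trivial for $I_w$.

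For the converse, assume $W_w$ is c.t.\ for $I_w$. I would invoke the classical criterion that c.t.\ for $G_w$ can be checked on Sylow subgroups. For a Sylow $\ell$-subgroup $Q$ with $\ell\ne p$, the order $|Q|$ is invertible in $\Ocal_w$, so multiplication by $|Q|$ is simultaneously an automorphism of $W_w$ and the zero map on $\widehat{\Hrm}^{*}(Q,W_w)$; that Tate cohomology therefore vanishes automatically. The task reduces to verifying c.t.\ for a single Sylow $p$-subgroup $P$ of $G_w$. Since $P_w$ is a \emph{normal} $p$-subgroup of $G_w$, one has $P_w\subseteq P$; since the tame inertia $I_w/P_w$ has order coprime to $p$, the quotient $P/P_w$ embeds into $G_w/I_w=\Gal(k_w/k_v)$ as a Sylow $p$-subgroup. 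Setting $F:=L_w^P$ and $F_1:=L_w^{P_w}$, it follows that $F_1/F$ is \emph{unramified} Galois with group $P/P_w$, whereas $L_w/F_1$ is totally wildly ramified with group $P_w$.

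Now $W_w$ is c.t.\ for the subgroup $P_w$ of $I_w$, so Lemma~\ref{lem:loc-freeness} applied to $L_w/F_1$ yields that $W_w$ is $\Ocal_{F_1}[P_w]$-free; passing to $P_w$-invariants, $W_w^{P_w}$ is a free $\Ocal_{F_1}$-module carrying a semilinear action of $P/P_w$. I would then invoke the easy unramified case of Noether's theorem (namely that $\Ocal_{F_1}$ is $\Ocal_F[P/P_w]$-free) together with unramified Galois descent $W_w^{P_w}\cong\Ocal_{F_1}\otimes_{\Ocal_F}(W_w^{P_w})^{P/P_w}$ to conclude that $W_w^{P_w}$ is free, and in particular cohomologically trivial, as an $\Ocal_F[P/P_w]$-module. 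Finally, the Hochschild--Serre identification $\widehat{\Hrm}^{*}(P,W_w)\cong\widehat{\Hrm}^{*}(P/P_w,W_w^{P_w})$, available because $W_w$ is c.t.\ for the normal subgroup $P_w$, delivers c.t.\ for $P$ and hence for $G_w$. The ``in particular'' assertion is then automatic: when $L_w/K_v$ is tame, $P_w=1$ and $|I_w|$ is invertible in $\Ocal_w$, so c.t.\ for $I_w$ holds trivially.

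The step I expect to require the most care is the Hochschild--Serre identification in \emph{Tate} cohomology over a normal subgroup on which the module is cohomologically trivial; this is a standard input from group cohomology but is the one place where one must check that a spectral-sequence degeneration produces an honest isomorphism rather than only information in low degrees.
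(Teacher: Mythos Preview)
Your proof is correct and rests on the same core idea as the paper's: reduce via Lemma~\ref{lem:loc-freeness} to a cohomological-triviality statement, then d\'evisse through a normal subgroup using Hochschild--Serre together with the unramified case applied to the invariants. The one difference is that you first pass to a Sylow $p$-subgroup $P$ and then run the d\'evissage along $P_w\triangleleft P$, whereas the paper works directly with $I_w\triangleleft G_w$: since $W_w^{I_w}$ is a semilinear $(G_w/I_w)$-lattice over $\Ocal_w^{I_w}$ (freeness over $\Ocal_w^{I_w}$ following from cohomological triviality for $I_w$ and Lemma~\ref{lem:loc-freeness}), the unramified case gives $W_w^{I_w}$ cohomologically trivial for $G_w/I_w$, and inflation--restriction finishes. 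Your Sylow reduction is harmless but unnecessary; the paper's route avoids it and is correspondingly shorter.
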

\begin{proof}
    The case when $L_w/K_v$ is \emph{unramified} is standard; \emph{cf.} \cite[\S2, Lemma~1]{Nakajima:GalMod-Var}. To handle the general case, it suffices, by Lemma~\ref{lem:loc-freeness}, to show that $W_w$ is cohomologically trivial for $G_w$ if and only if it is cohomologically trivial for $I_w$. And since by the unramified case $W_w^{I_w}$ is cohomologically trivial for $G_w/I_w$ (being a semilinear $G_w/I_w$-lattice over $\Ocal_w^{I_w}$), the desired claim follows from the inflation-restriction sequence for the Tate cohomology. The claim for the tame case now follows since any $\Ocal_v[I_w]$-module cohomologically trivial for $I_w$ when $|I_w|$ is prime to $p$.
\end{proof}

We next classify the inertia action on any semilinear $G_w$-representation over $\Ocal_w$ in the tame case. For this let us first describe all the mod~$p$ absolutely irreducible representations of $I_w$, without assuming tameness.
\begin{defn}\label{def:theta}
    Let $\theta_w\colon I_w\to k_w^\times$ be the character corresponding to the natural $I_w$-action on $\mfr_w/\mfr_w^2$; in other words, choosing a uniformiser $\varpi_w\in\mfr_w$ we have
\[\theta_w(g) \equiv g\varpi_w/\varpi_w \bmod{m_w}\quad\forall g\in I_w.\]
The $I_w$-action on $\mfr_w^n/\mfr_w^{n+1}$ is given by $\theta_w^n$.
\end{defn}

\begin{rmk}\label{rmk:simple}
    Note that $\theta_w$ induces an inclusion $I_w/P_w\hookrightarrow k_w^\times$, so the order of $\theta_w$ is $|I_w/P_w|$. Furthermore, since $P_w$ acts trivially on any simple $k_w[I_w]$-module,  any simple $k_w[I_w]$-module is isomorphic to exactly one of $\mfr_w^n/\mfr_w^{n+1}$ for $n\in\ZZ / (|I_w/P_w|)$.
\end{rmk}

\begin{lem}\label{lem:tame-inertial-action}
    Suppose that $L_w/K_v$ is tame, and 
    let $W_w$ be a rank-$d$ semilinear $G_w$-representation over $\Ocal_w$.    
    Then there exist integers $n_{w,1},\cdots,n_{w,d}\in\{0,\cdots,|I_w|-1\}$, unique up to ordering, such that we have a $k_w[I_w]$-module isomorphism
    \[
    W_w \otimes_{\Ocal_w} k_w \cong \bigoplus_{i=1}^{d} (\mfr_w^{-n_{w,i}}/\mfr_w^{-n_{w,i}+1}).
    \]
    Furthermore, the above isomorphism can be lifted to an isomorphism 
    \[
    W_w \cong  \bigoplus_{i=1}^{d} \mfr_w^{-n_{w,i}}.
    \]
    of semilinear $I_w$-representations over $\Ocal_w$.
\end{lem}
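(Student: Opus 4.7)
The plan is to deduce the mod-$\mfr_w$ decomposition directly from Maschke's theorem together with Remark~\ref{rmk:simple}, and then to lift it to $W_w$ using orthogonal idempotents built from a Teichm\"uller lift of $\theta_w$. Tameness forces $I_w$ to be cyclic of order $e := |I_w|$ coprime to $p$ and to act trivially on $k_w$, so $\overline{W}_w := W_w \otimes_{\Ocal_w} k_w$ is genuinely a $k_w$-\emph{linear} $I_w$-representation of dimension $d$. Maschke's theorem then decomposes $\overline{W}_w$ into isotypic components, and by Remark~\ref{rmk:simple} these take the form $\mfr_w^{-n}/\mfr_w^{-n+1}$ with $n$ ranging over $\ZZ/e$. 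Restricting the exponents to $\{0,\dots,e-1\}$ produces the integers $n_{w,i}$; they are unique up to ordering since distinct choices in that range give pairwise non-isomorphic simple $k_w[I_w]$-modules.

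For the lifting, I would use Hensel's lemma (valid because $e$ is prime to $p$) to lift $\theta_w$ to a character $\widetilde\theta\colon I_w \to \mu_e(\Ocal_w)$, observing that $\mu_e(\Ocal_w) \subseteq \Ocal_w^{I_w}$ since $\mu_e(\Ocal_w) \xrightarrow{\sim} \mu_e(k_w)$ and $I_w$ fixes $k_w$. This gives orthogonal idempotents
\[
\epsilon_n \;=\; \frac{1}{e}\sum_{g \in I_w}\widetilde\theta(g)^{n}\,g \;\in\; \Ocal_w^{I_w}[I_w],\qquad n \in \ZZ/e,
\]
summing to $1$, and eigenspaces $V_n := \epsilon_n W_w = \{v \in W_w : gv = \widetilde\theta(g)^{-n} v\ \forall g \in I_w\}$. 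Although $\epsilon_n$ fails to be $\Ocal_w$-linear, it is $\Ocal_w^{I_w}$-linear, so $W_w = \bigoplus_n V_n$ as $\Ocal_w^{I_w}$-modules. The key observation is that reduction mod $\mfr_w$ induces a surjection $V_n \twoheadrightarrow \overline{V}_n := \overline{W}_w[\theta_w^{-n}]$: given $\bar v \in \overline{V}_n$ and any lift $w \in W_w$ of $\bar v$, the element $\epsilon_n w$ lies in $V_n$ and reduces to $\overline{\epsilon_n}\,\bar v = \bar v$.

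Choosing a $k_w$-basis of each $\overline V_n$ and lifting it via this surjection to a family of eigenvectors $v_{n,j} \in V_n$, the resulting $d$ elements reduce to a $k_w$-basis of $\overline{W}_w$, hence by Nakayama's lemma form an $\Ocal_w$-basis of $W_w$. Finally, fixing a uniformiser $\varpi_w$ satisfying $g\varpi_w = \widetilde\theta(g)\varpi_w$ for all $g \in I_w$---such a uniformiser exists by Kummer theory since $L_w/L_w^{I_w}$ is totally tame and $\mu_e \subset L_w^{I_w}$---the map $\varpi_w^{-n}\lambda \mapsto \lambda v_{n,j}$ yields an isomorphism $\mfr_w^{-n} \xrightarrow{\sim} \Ocal_w v_{n,j}$ of semilinear $I_w$-representations. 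Summing over $n$ and $j$ produces the desired decomposition $W_w \cong \bigoplus_i \mfr_w^{-n_{w,i}}$ lifting the mod-$\mfr_w$ one. The only subtle point is that $\epsilon_n$ is not $\Ocal_w$-linear, which is why the decomposition is initially visible only over $\Ocal_w^{I_w}$ and the $\Ocal_w$-basis has to be recovered via Nakayama afterwards; I do not anticipate any serious obstacle beyond this.
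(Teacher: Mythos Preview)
Your proof is correct and follows essentially the same approach as the paper: both use the averaging operator $\frac{1}{|I_w|}\sum_{g\in I_w}\widetilde\theta(g)^{n}\,g$ (which you package as the idempotents $\epsilon_n$) to lift eigenvectors from $\overline W_w$ to $W_w$, and then conclude via Nakayama. Your version is a bit more explicit about the Teichm\"uller lift of $\theta_w$ and about the final isomorphism $\Ocal_w v_{n,j}\cong \mfr_w^{-n}$ via a Kummer-type uniformiser, points the paper leaves implicit.
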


\begin{proof}
    By tameness, the group ring $k_w[I_w]$ is semi-simple and its simple modules are described in Remark~\ref{rmk:simple}. Therefore, one can find a $k_w$-basis $\bar e_1,\cdots,\bar e_d$ of $W_w\otimes_{\Ocal_w}k_w$ such that $I_w$ acts on $\bar e_i$ via $\theta_w^{-n_{w,i}}$ for $0\leqslant n_{w,i}<|I_w|$. We choose a lift $e_i\in W_w$ of $\bar e_i$ for each $i$, and set 
    \[
    e_i'\coloneqq \frac{1}{|I_w|}\sum_{g\in I_w}\theta_w^{n_{w,i}}(g)\cdot(g e_i).
    \]
    Then each $e_i'\in W_w$ lifts $\bar e_i$ and satisfies $ge_i' = \theta_w^{-n_{w,i}}(g)e_i'$ for any $g\in I_w$. Therefore, $W_w$ can be written as a direct sum of $I_w$-stable $\Ocal_w$-submodules $\Ocal_w e_i'$, which is isomorphic to $\mfr_w^{-n_{w,i}}$ as a semilinear $I_w$ representation over $\Ocal_w$.
\end{proof}

If $L_w/K_v$ is wildly ramified, then the Galois module structure of a semilinear $G_w$-representation over $\Ocal_w$ could be quite complicated in general. Instead, we focus on the case where $W_w$ is of rank~$1$. To proceed, we need the following definition.

\begin{defn}\label{def:weak-ram}
We say that $L_w/K_v$ is \emph{weakly ramified} if the second lower-index ramification subgroup $I_{w,2}$ is trivial. 
\end{defn}
Recall that for any non-negative integer $s$, we set
\[I_{w,s}\coloneqq \{g\in I_w \mid g\varpi_w\equiv \varpi_w\bmod{\mfr_w^{s+1}}\}\]
for some (or equivalently, any) uniformiser $\varpi_w\in\mfr_w$. Note that $I_w = I_{w,0}$ and $P_w = I_{w,1}$.

Clearly, unramified or tamely ramified extensions are weakly ramified. Much less obvious examples of weakly ramified extensions are those obtained by the completion of a finite Galois cover $\pi\colon X_L\to X$ of \emph{ordinary}\footnote{A curve over a field of characteristic $p>0$ is defined to be \emph{ordinary} if the genus and the $p$-rank coincides.} curves over a perfect field of characteristic $p>0$; \emph{cf.} \cite[Theorem~2(i)]{Nakajima:p-rank}.

Being weakly ramified imposes a strong condition on the inertia group $I_w$ as follows.
\begin{lem}\label{lem:Cw}
    For any finite Galois extension $L_w/K_v$ we have 
    \[I_w = P_w\rtimes C_w\]
    where $P_w$ is a $p$-group and $C_w$ is a cyclic group of prime-to-$p$ order. Furthermore, if $L_w/K_v$ is weakly ramified, then $P_w$ is an elementary $p$-group and the conjugation action of $C_w$  on $P_w\setminus\{1\}$ is faithful.
\end{lem}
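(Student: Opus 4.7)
The plan is to handle the two assertions in turn, both as standard consequences of the ramification filtration and the Schur--Zassenhaus theorem. For the first assertion, the character $\theta_w\colon I_w \to k_w^\times$ from Definition~\ref{def:theta} has kernel the wild inertia $P_w$ (a normal $p$-group) and image cyclic of prime-to-$p$ order, as noted in Remark~\ref{rmk:simple}. Since $\gcd(|P_w|,|I_w/P_w|) = 1$, Schur--Zassenhaus supplies a complement $C_w\subset I_w$ mapping isomorphically onto the cyclic group $I_w/P_w$, giving $I_w = P_w\rtimes C_w$.

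For the second assertion, I would pass to the lower-numbering ramification filtration $\{I_{w,s}\}_{s\ge 0}$. Since $P_w = I_{w,1}$ and the weak-ramification hypothesis reads $I_{w,2} = 1$, a standard construction (\emph{cf.}~Serre, \emph{Local Fields}, Ch.~IV) yields an $I_w$-equivariant injection
\[
P_w = I_{w,1}/I_{w,2}\;\hookrightarrow\;\mfr_w/\mfr_w^2,\qquad g\mapsto (g\varpi_w - \varpi_w)/\varpi_w \bmod \mfr_w^2,
\]
where $I_w$ acts on the source by conjugation and on the target via $\theta_w$. Since the target is additive and $p$-torsion, $P_w$ is elementary abelian. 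For the faithfulness of the $C_w$-action on $P_w\setminus\{1\}$: if $\sigma\in C_w$ is nontrivial then $\theta_w(\sigma)\neq 1$, since $\theta_w|_{C_w}$ is the composite $C_w\xrightarrow{\sim} I_w/P_w\hookrightarrow k_w^\times$; then for any nontrivial $g\in P_w$, with nonzero class $b_g\in\mfr_w/\mfr_w^2$, the class of $\sigma g\sigma^{-1}$ equals $\theta_w(\sigma)\cdot b_g\neq b_g$, so $\sigma g\sigma^{-1}\neq g$. In particular, no nontrivial element of $C_w$ fixes any nontrivial element of $P_w$, which is stronger than the required faithfulness.

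Nothing in the argument is genuinely difficult. The only mild point of care is ensuring the embedding $I_{w,1}/I_{w,2}\hookrightarrow \mfr_w/\mfr_w^2$ is stated with the correct $I_w$-equivariance (conjugation on the left, $\theta_w$-action on the right, i.e., the first power of $\theta_w$ and not some other $\theta_w^s$); this is a direct unwinding of the definitions and is the only step one should not gloss over.
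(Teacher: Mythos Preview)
Your proof is correct and follows essentially the same approach as the paper: the paper's proof consists of a single sentence citing Proposition~9 and the corollaries of Proposition~7 in \cite[Chap~IV, \S2]{Serre:LocalFields}, which are exactly the facts you unwind in detail (the splitting via $\theta_w$ and Schur--Zassenhaus, and the equivariant embedding $I_{w,1}/I_{w,2}\hookrightarrow\mfr_w/\mfr_w^2$). Your observation that the argument actually yields a \emph{free} $C_w$-action on $P_w\setminus\{1\}$, not merely a faithful one, is correct and is in fact what is used downstream (Remark~\ref{rmk:Cw-choices} and Lemma~\ref{lem:proj-cover}).
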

\begin{proof}
    The properties can be deduced from Proposition~9 and the corollaries of Proposition~7 in \cite[Chap~IV, \S2]{Serre:LocalFields}.
\end{proof}

\begin{rmk}\label{rmk:Cw-choices}
    The choice of the lift $C_w$ of $I_w/P_w$ is far from canonical if $P_w$ is a proper non-trivial subgroup of $I_w$, but different choices of $C_w$ are conjugate to each other. Indeed, by direct computation we have
\begin{equation}
    C_w\cap (g C_w g^{-1}) = \{1\} \quad \forall g\in P_w\setminus\{1\};
\end{equation}
\emph{cf.} the proof of Lemma~4.2 in \cite{Koeck:EquivRR}. By simple counting we obtain $I_w\setminus P_w = \bigsqcup_{g\in P_w}(gC_wg^{-1}\setminus\{1\})$, so in particular any lift of $I_w/P_w$ in $I_w$ is of the form $g C_w g^{-1}$ for a unique $g\in P_w$.
\end{rmk}

\begin{lem}\label{lem:proj-cover}
    For any finite Galois extension $L_w/K_v$, any indecomposable projective $k_w[I_w]$-module is isomorphic to exactly one of 
    \[M_w(j)\coloneqq \Ind_{C_w}^{I_w}\left((\mfr_w^j/\mfr_w^{j+1})|_{C_w}\right) \quad \text{for }j\in \ZZ/(|C_w|).\] 
    Furthermore, $M_w(j)$ is a $k_w[I_w]$-projective cover of $\mfr_w^j/\mfr_w^{j+1}$, so it does not depend on the choice of $C_w$ up to isomorphism.

    If $L_w/K_v$ is weakly ramified, then we have 
    \[M_w(j)|_{C_w}\cong (\mfr_w^j/\mfr_w^{j+1})\oplus k_w[C_w]^{\oplus\frac{|P_w|-1}{|C_w|}}.\]
\end{lem}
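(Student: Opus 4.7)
The plan is to verify the three assertions in turn, exploiting that $I_w = P_w \rtimes C_w$ with $P_w$ a normal $p$-Sylow subgroup and $C_w$ of prime-to-$p$ order.

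I would first establish the projectivity of $M_w(j)$ and identify its head. Since $|C_w|$ is prime to $p$, the group ring $k_w[C_w]$ is semisimple, so $(\mfr_w^j/\mfr_w^{j+1})|_{C_w}$ is $k_w[C_w]$-projective, and induction then yields projectivity of $M_w(j)$ over $k_w[I_w]$. To compute the head, I would use the standard fact that $\rad(k_w[I_w])$ is generated as a two-sided ideal by the augmentation ideal $I_{P_w} \subset k_w[P_w]$ --- this follows because the quotient $k_w[I_w]/(I_{P_w}) \cong k_w[I_w/P_w] \cong k_w[C_w]$ is semisimple while $(I_{P_w})$ is nilpotent as $P_w$ is a $p$-group. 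Reducing,
\[
M_w(j)/\rad(k_w[I_w])M_w(j) \;\cong\; k_w[I_w/P_w] \otimes_{k_w[C_w]} (\mfr_w^j/\mfr_w^{j+1})|_{C_w} \;\cong\; \mfr_w^j/\mfr_w^{j+1},
\]
using the isomorphism $C_w \xrightarrow{\sim} I_w/P_w$. Since this head is a simple (hence $1$-dimensional) module, $M_w(j)$ must be indecomposable, so it is a projective cover of $\mfr_w^j/\mfr_w^{j+1}$; independence from the choice of $C_w$ is then automatic, as a projective cover depends only on its simple quotient. Combined with the exhaustion of simple $k_w[I_w]$-modules by $\{\mfr_w^j/\mfr_w^{j+1}\}_{j \in \ZZ/|C_w|}$ given in Remark~\ref{rmk:simple}, the classification of indecomposable projective $k_w[I_w]$-modules follows from the bijection between simples and indecomposable projectives for finite-dimensional algebras.

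For the restriction formula, I would use $I_w = P_w \cdot C_w$ to identify $M_w(j)|_{C_w} \cong k_w[P_w] \otimes_{k_w} (\mfr_w^j/\mfr_w^{j+1})$ as a $k_w[C_w]$-module, with $C_w$ acting on $k_w[P_w]$ via conjugation on the basis $P_w$ and on the tensor factor through $\theta_w^j|_{C_w}$. The displayed identity of Remark~\ref{rmk:Cw-choices} implies that no nontrivial element of $C_w$ centralises any element of $P_w \setminus \{1\}$ (if $cg = gc$ with $g \in P_w \setminus \{1\}$ then $c = gcg^{-1} \in C_w \cap gC_wg^{-1} = \{1\}$), so the conjugation action of $C_w$ on $P_w \setminus \{1\}$ is free and all nontrivial orbits have size $|C_w|$. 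Weak ramification enters through Lemma~\ref{lem:Cw} to supply the additional structure (in particular elementary abelianness of $P_w$) matching the setup of the statement. Consequently, as a $k_w[C_w]$-module,
\[
k_w[P_w] \;\cong\; k_w \,\oplus\, k_w[C_w]^{\oplus (|P_w|-1)/|C_w|},
\]
and tensoring with $(\mfr_w^j/\mfr_w^{j+1})|_{C_w}$ (which twists $k_w[C_w]$ back to itself) gives the claimed decomposition.

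The only mildly subtle steps will be identifying $\rad(k_w[I_w])$ cleanly via the normal Sylow subgroup $P_w$, and tracking the $C_w$-action carefully when switching between the presentations $k_w[I_w] \otimes_{k_w[C_w]}(-)$ and $k_w[P_w] \otimes_{k_w}(-)$; neither step is genuinely difficult.
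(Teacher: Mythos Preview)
Your proof is correct and follows essentially the same approach as the paper: identifying $\rad(k_w[I_w])$ via the augmentation ideal of $k_w[P_w]$, computing the head of $M_w(j)$ to be simple, and invoking the bijection between simples and indecomposable projectives. For the restriction formula the paper simply cites \cite[Lemma~4.2]{Koeck:EquivRR}, whereas you spell out that argument directly via the free conjugation action of $C_w$ on $P_w\setminus\{1\}$; your version is thus more self-contained. One small remark: your sentence invoking elementary abelianness of $P_w$ from Lemma~\ref{lem:Cw} is superfluous, since your argument for the restriction only uses the free-action consequence of Remark~\ref{rmk:Cw-choices} and would go through verbatim without it.
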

\begin{proof}
    Note that the radical $\rad(k_w[I_w])$ of $k_w[I_w]$ is generated by the augmentation ideal  of $k_w[P_w]$. Furthermore, we have an $k_w[I_w]$-module isomorphism
    \[M_w(j)/\rad(k_w[I_w]) = M_{w}(j)_{P_w} \cong \mfr_w^j/\mfr_w^{j+1}.\]
    Then essentially by the Nakayama lemma, $M_w(j)$ is a $k_w[I_w]$-projective cover of $\mfr_w^j/\mfr_w^{j+1}$; \emph{cf.} \cite[Theorem~(6.23)]{CurtisReiner:Methods1}. Indecomposability of $M_w(j)$ follows from being a projective cover of a simple $k_w[I_w]$-module. Since $k_w[I_w] \cong\bigoplus_j M_w(j)$, any non-zero projective $k_w[I_w]$-module contains a copy of some $M_w(j)$. Finally, the last claim is proved in \cite[Lemma~4.2]{Koeck:EquivRR}.
\end{proof}

Let us now recall the \emph{local integral normal basis theorem} due to K\"ock:
\begin{thm}[K\"ock {\cite[Theorem~1.1]{Koeck:EquivRR}}]\label{th:int-normal-basis}
    The local fractional ideal $\mfr_w^{-n}$ for $n\in \ZZ$ is free of rank~$1$ as an $\Ocal_v[G_w]$-module if and only if $L_w/K_v$ is weakly ramified and $n\equiv-1\bmod{|P_w|}$.
\end{thm}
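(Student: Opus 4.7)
The plan is to reduce the claim to a $k_w[I_w]$-module-structure statement and then analyse the reduction via the natural filtration of $\mfr_w^{-n}$ by powers of $\mfr_w$. First, by Proposition~\ref{prop:Noether} combined with Lemma~\ref{lem:loc-freeness} (the latter applied to $I_w$ in place of $G_w$), $\mfr_w^{-n}$ being $\Ocal_v[G_w]$-free of rank $1$ is equivalent to $\mfr_w^{-n}$ being $\Ocal_v[I_w]$-projective (rank $1$ over $\Ocal_v[G_w]$ is automatic via Swan's theorem after base change to $K_v$ and the Normal Basis Theorem for $L_w/K_v$). Since $\Ocal_v$ is a complete DVR and $\mfr_w^{-n}$ is $\Ocal_v$-free, projective-cover lifting makes this equivalent to $k_w[I_w]$-projectivity of the reduction
\[
M \coloneqq \mfr_w^{-n}/\mfr_v\mfr_w^{-n} = \mfr_w^{-n}/\mfr_w^{-n+e}, \qquad e \coloneqq |I_w|.
\]
As $\dim_{k_w} M = e = \dim_{k_w} k_w[I_w]$, projectivity here is equivalent to $M \cong k_w[I_w]$ as $k_w[I_w]$-modules, and it is this isomorphism I aim to characterise.

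The module $M$ carries the $I_w$-stable filtration induced by $\{\mfr_w^{-n+i}\}_{0 \leq i \leq e}$, with successive quotients $\mfr_w^{-n+i}/\mfr_w^{-n+i+1}$ on which $I_w$ acts via $\theta_w^{-n+i}$ (Definition~\ref{def:theta}); in particular $P_w$ acts unipotently on $M$. Restricting to $C_w$, tameness makes $k_w[C_w]$ semisimple, so the filtration splits $C_w$-equivariantly; since $\theta_w|_{C_w}$ has order $|C_w|$, each character of $C_w$ occurs with multiplicity $|P_w|$, making $M|_{C_w}$ free of rank $|P_w|$ regardless of $n$ or the wild structure.

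For the sufficiency, assume $L_w/K_v$ is weakly ramified and write $n = a|P_w| - 1$; fix a uniformiser $\varpi_w$ and put $\xi_0 \coloneqq \varpi_w^{-n} \bmod \mfr_w^{-n+e} \in M$. The weakly ramified hypothesis ($I_{w,2} = 1$) ensures $\sigma\varpi_w - \varpi_w \in \mfr_w^2 \setminus \mfr_w^3$ for every $\sigma \in P_w \setminus \{1\}$; a graded-level computation --- comparing each iterated $P_w$-translate of $\xi_0$ with its image in $\mfr_w^{-n+k}/\mfr_w^{-n+k+1}$ --- then shows that the $k_w[P_w]$-submodule generated by $\xi_0$ has full $k_w$-dimension $|P_w|$, hence is free of rank $1$ over $k_w[P_w]$. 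Using $I_w = P_w \rtimes C_w$ (Lemma~\ref{lem:Cw}) and the description of $M_w(j)$ in Lemma~\ref{lem:proj-cover}, I would then spread $\xi_0$ across the characters of $C_w$ by setting
\[
\xi_j \coloneqq \frac{1}{|C_w|}\sum_{c \in C_w}\chi_j(c)^{-1}\, c\xi_0 \qquad (j \in \ZZ/|C_w|)
\]
and verify that $k_w[I_w] \cdot \xi_j \cong M_w(j)$ and $M = \bigoplus_j k_w[I_w] \cdot \xi_j \cong k_w[I_w]$. The congruence $n \equiv -1 \pmod{|P_w|}$ is used essentially to place $\xi_0$ in the correct filtration level so that the $k_w[P_w]$-cyclic construction above produces a full free module.

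Conversely, if $M \cong k_w[I_w]$, restriction to $P_w$ forces $M|_{P_w}$ to be $k_w[P_w]$-free of rank $|C_w|$, whence $\dim_{k_w} M^{P_w} = |C_w|$. A careful filtration analysis shows that violating either hypothesis strictly inflates $\dim_{k_w} M^{P_w}$: if $I_{w,2} \neq 1$ then elements of $I_{w,2}$ act trivially on a deeper step of the filtration and produce extra $P_w$-fixed vectors; if $n \not\equiv -1 \pmod{|P_w|}$ then the graded computation in the preceding paragraph fails, the $k_w[P_w]$-submodule generated by $\xi_0$ has strictly smaller dimension, and $M^{P_w}$ acquires additional fixed vectors. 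The main obstacle is the explicit graded-filtration computation underlying the sufficiency --- tracking the non-linear correction terms in $\sigma\varpi_w = \varpi_w + O(\varpi_w^2)$ through iterated $P_w$-action on $\varpi_w^{-n}$ and showing that the resulting images remain linearly independent modulo $\mfr_w^{-n+k+1}$ as $k$ grows; this graded analysis is the technical heart of K\"ock's original argument in \cite{Koeck:EquivRR}.
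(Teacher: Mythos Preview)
The paper does not give its own proof of this theorem: it is stated as a citation of K\"ock's result \cite[Theorem~1.1]{Koeck:EquivRR} and used as a black box, so there is no in-paper argument to compare against. What you have written is a sketch of (your reconstruction of) K\"ock's original proof.

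Your overall strategy is sound and does track K\"ock's: reduce to $I_w$ via Proposition~\ref{prop:Noether}, pass to the reduction $M=\mfr_w^{-n}/\mfr_w^{-n+e}$, and test $k_w[I_w]$-freeness via $\dim_{k_w}M^{P_w}$. The $C_w$-splitting and the use of Lemma~\ref{lem:proj-cover} are also in the right spirit. However, two points deserve flagging. First, your necessity argument is not quite organised correctly: rather than arguing qualitatively that ``extra fixed vectors appear'', K\"ock's proof computes $\dim_{k_w}M^{P_w}$ \emph{exactly} in terms of the ramification numbers (via the valuation of the different and Lemma~1.4 of \cite{Koeck:EquivRR}), and both conditions --- weak ramification and $n\equiv -1\bmod |P_w|$ --- drop out of the equation $\dim_{k_w}M^{P_w}=|C_w|$ simultaneously. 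Your filtration heuristic for the failure when $n\not\equiv -1$ is plausible but, as written, does not actually establish strict inequality. Second, in your sufficiency step the ``graded-level computation'' showing that $k_w[P_w]\cdot\xi_0$ is free is precisely the part you yourself label the main obstacle; you have identified the right mechanism (that $\sigma\varpi_w-\varpi_w$ has exact valuation $2$ under weak ramification) but have not carried the induction through, so the argument is a sketch rather than a proof. Since the paper only cites the result, this is not a discrepancy with the paper --- but if you intend this as a self-contained proof, both of these gaps would need to be closed by the explicit invariant/different computation as in \cite[\S1]{Koeck:EquivRR}.
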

If $L_w/K_v$ is tame (i.e., we have $|P_w|=1$) then the theorem asserts that any fractional ideal $\mfr_w^{-n}$ is projective as an $\Ocal_v[G_w]$-module, which is consistent with Proposition~\ref{prop:Noether}. 

\begin{rmk}\label{rmk:higher-rk-normal-basis}
    The higher-rank generalisation of Theorem~\ref{th:int-normal-basis} (or rather, the wildly ramified analogue of Lemma~\ref{lem:tame-inertial-action}) could be quite complicated. To illustrate, let  $L_w/K_v$ be any finite Galois extension (not necessarily weakly ramified) and choose a semilinear $G_w$-representation $W'_w$ over $\Ocal_w$. (We do \emph{not} require $W'_w$ to be projective as $\Ocal_v[G_w]$-module.) Then $\Ocal_w[G]\otimes_{\Ocal_w}W_w'$, with $G$ acting diagonally, is a semilinear $G_w$-representation over $\Ocal_w$ that is free as a $\Ocal_v[G_w]$-module; indeed, the following $\Ocal_w$-linear isomorphism 
    \begin{align*}
        \Ind_1^{G_w}W_w' \cong \Ocal_w[G]\otimes_{\Ocal_w}W_w' &\riso \Ocal_w[G]\otimes_{\Ocal_w}W_w'\\
        (\sum_{g\in G_w}a_gg)\otimes x &\mapsto  \sum_{g\in G_w}(a_gg)\otimes gx
    \end{align*}
    is $G_w$-equivariant.
\end{rmk}

\section{Equivariant Riemann--Roch for weakly ramified covering}\label{sec:EquivRR}
Let $k$ be a perfect field of characteristic $p>0$. Let $X$ be a smooth projective geometrically connected curve over $k$, with its function field denoted by $K$. For any finite extension $L$ of $K$, we write $X_{L}$ denote the normalisation of $X$ in $\Spec L$ equipped with the covering map $\pi\colon X_{L}\to X$. (We do \emph{not} require $X_L$ to be geometrically connected over $k$.) Let $|X|$ and $|X_L|$ respectively denote the set of closed points of $X$ and $X_L$. 

From now on, suppose that $L/K$ is \emph{Galois} with group $G$, so $\pi$ is a $G$-covering. Choosing $v\in |X|$ and $w\in \pi^{-1}(v)$, we obtain the Galois extension $L_w/K_v$ via completion. We employ the same notation as in \S\ref{sec:local}.

%

We next study the ``equivariant Euler characteristic'' of the cohomology of $G$-equivariant vector bundles on $X_L$; i.e., a locally free $\Ocal_{X_L}$-module with semilinear $G$-action. Given such $\Ecal$, we may represent $\RGamma(X_L,\Ecal)$ as a complex of $kG$-modules (eg, by choosing a $G$-stable \v{C}ech covering). \emph{If} we have $\RGamma(X_L,\Ecal) \in \Dperf(kG)$, then we write
\begin{equation}
    \chi_{kG}(\Ecal)\coloneqq \chi_{kG}(\RGamma(X_L,\Ecal))\in \Krm_0(kG),
\end{equation}
where the right hand side is defined in \eqref{eq:Euler-K0}.

Given a finite Galois cover $\pi\colon X_L\to X$, we let $Z_L^{\ram}$ (respectively, $Z_L^{\wild}$) denote the locus in $X_L$ where $\pi$ is ramified (respectively, wildly ramified).

We are now ready to state the \emph{equivaiant Riemann--Roch theorem}, which  generalises the results of Nakajima \cite{Nakajima:GalMod-tame},  K\"ock \cite{Koeck:EquivRR} and Fischbacher-Weitz and K\"ock \cite{FischbacherWeitz-Koeck:EquivRR}.
\begin{thm}\label{th:EquivRR}
\setcounter{equation}{\value{equation}-1}
\begin{subequations}
    Let $\pi\colon X_L\to X$ be a $G$-cover that is weakly ramified everywhere. Let $\Ecal$  be a $G$-equivariant vector bundle on $X_L$, and suppose that for any $w\in Z_L^{\wild}$ we have 
\begin{equation}\label{eq:Ew}
    \widehat\Ecal_w \cong \bigoplus_{i=1}^{\rk\Ecal} \mfr_w^{-n_{w,i}}
    \quad \text{where }n_{w,i}\equiv -1 \bmod{|P_w|}\ \text{for any } i
\end{equation}
as a semilinear $I_w$-representation over $\Ocal_w$. 
Then the following properties hold.
\begin{enumerate}
    \item\label{th:EquivRR:perf} We can represent $\RGamma(X_L,\Ecal)$ by a complex of finitely generated projective $kG$-modules in degrees $[0,1]$. In particular, we have $\RGamma(X_L,\Ecal)\in \Dperf(kG)$. 
    \item\label{th:EquivRR:formula} For any $w\in Z_L^{\ram}$ and $i\in\{1,\cdots,\rk\Ecal\}$, define $l_{w,i}$ to be the unique integer satisfying 
    \begin{equation}\label{eq:lw}
        l_{w,i}\equiv \frac{1+n_{w,i}}{|P_w|}-1\bmod{|I_w/P_w|} \quad\text{and} \quad  0\leqslant l_{w,i}< |I_w/P_w|,
    \end{equation}
    where $n_{w,i}$'s are as in \eqref{eq:Ew} for $w\in Z_L^{\wild}$, and as in  Lemma~\ref{lem:tame-inertial-action} if $w\notin Z_L^{\wild}$.\footnote{Note that for $w\in Z_L^{\ram}\setminus Z_L^{\wild}$, we have $l_{w,i} = n_{w,i}$ as  we have $|P_w|=1$ and $0\leqslant n_{w,i}< |I_w|$.} 
%
 
    Then we have the following equality in $\Krm_0(kG)\otimes\QQ$
    \begin{equation}\label{eq:EquivRR}
        \chi_{kG}(\Ecal) = -(\rk\Ecal)[N(\pi)] + [W_G(\Ecal)]+ \Ind_1^G\big(\chi_k(\Ecal^G)\big),
    \end{equation}
    where $\Ecal^G$ is the $G$-invariants of $\Ecal$, which is a vector bundle on $X$, and    
    \begin{align*}
        [N(\pi)]  & \coloneqq
          \frac{1}{|G|}\sum_{w\in Z_L^{\ram}}|P_w|\sum_{j=1}^{|I_w/P_w|-1} j\cdot \left[\Ind_{I_w}^G \big(M_w(j)\big)\right]\ \text{and}\\
        [W_G(\Ecal)]& \coloneqq \sum_{w\in Z_L^{\ram}}\frac{1}{[G:I_w]}\sum_{i=1}^{\rk\Ecal}\sum_{j=1}^{l_{w,i}}\left[\Ind_{I_w}^G \big(M_w(-j)\big)\right]. 
    \end{align*}
    Here, $M_w(j)$ is defined in Lemma~\ref{lem:proj-cover}.
\end{enumerate}
\end{subequations}
\end{thm}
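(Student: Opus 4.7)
The plan is to first establish part~(\ref{th:EquivRR:perf}) by showing that $\pi_*\Ecal$ is locally free as an $\Ocal_X[G]$-module, and then derive the formula in part~(\ref{th:EquivRR:formula}) by reducing to the rank-one case treated by Fischbacher-Weitz and K\"ock.

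For perfectness, I would verify at each closed point $v \in |X|$ with preimage $w$ that the completion $\widehat\Ecal_w$ is free as an $\Ocal_v[G_w]$-module; inducing up to $G$ then gives local $\Ocal_X[G]$-freeness of $\pi_*\Ecal$. When $w$ is tame or unramified this is immediate from Proposition~\ref{prop:Noether}. When $w \in Z_L^{\wild}$, the hypothesis (\ref{eq:Ew}) together with K\"ock's local integral normal basis theorem (Theorem~\ref{th:int-normal-basis}) applied to each summand $\mfr_w^{-n_{w,i}}$ gives the required freeness, using precisely that $L_w/K_v$ is weakly ramified and $n_{w,i} \equiv -1 \pmod{|P_w|}$. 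From this, a $G$-stable two-element affine cover of $X_L$ (pulled back from $X$) yields a two-term \v{C}ech complex with projective terms over the respective group rings; combined with properness of $X$ (so the cohomology is finite-dimensional over $k$) and a standard Mumford-type truncation, we obtain a quasi-isomorphic two-term complex of finitely generated projective $kG$-modules in degrees $[0,1]$.

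For part~(\ref{th:EquivRR:formula}), the strategy is to reduce to the line bundle case of \cite[\S3, Theorem~12]{FischbacherWeitz-Koeck:EquivRR}. First I would verify that both sides of (\ref{eq:EquivRR}) are additive in $G$-equivariant short exact sequences $0 \to \Ecal' \to \Ecal \to \Ecal'' \to 0$ of bundles satisfying the hypotheses, where the local data $(n_{w,i})$ of $\Ecal$ splits as that of $\Ecal'$ and $\Ecal''$. The left-hand side is additive by standard homological algebra. On the right, the term $-(\rk\Ecal)[N(\pi)]$ and the sum $[W_G(\Ecal)]$ are additive from their definitions, while additivity of $\Ind_1^G \chi_k(\Ecal^G)$ follows from the left exactness of $G$-invariants together with the vanishing of higher $G$-cohomology of each term (guaranteed by the local $\Ocal_X[G]$-freeness established above). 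Then I would construct a $G$-equivariant filtration of $\Ecal$ by subbundles with line bundle quotients, interpolating the local rank-one splittings provided by (\ref{eq:Ew}) and Lemma~\ref{lem:tame-inertial-action} to a global filtration. Applying Fischbacher-Weitz and K\"ock to each successive line bundle quotient, and checking the match with the $\rk=1$ specialisation of (\ref{eq:EquivRR}), then completes the proof.

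The most delicate step is the construction of such a $G$-equivariant filtration of $\Ecal$, since the local rank-one splittings at distinct wild points need not be globally compatible and must be glued across $X_L$ in a $G$-equivariant manner. A possible alternative is to sidestep this via an equivariant splitting principle or Brauer-induction argument in $\Krm_0(kG) \otimes \QQ$: after clearing denominators, the class of $\Ecal$ can be expressed as a rational combination of classes of $G$-equivariant line bundles carrying the appropriate local data, and the rank-one formula then extends to all $\Ecal$ by $\QQ$-linearity.
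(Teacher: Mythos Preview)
Your argument for part~(\ref{th:EquivRR:perf}) is correct and is essentially the paper's proof: local freeness of $\pi_*\Ecal$ at each $v$ via Proposition~\ref{prop:Noether} (tame points) and Theorem~\ref{th:int-normal-basis} (wild points), followed by the Chinburg-type perfectness criterion.

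For part~(\ref{th:EquivRR:formula}) there is a genuine gap. Your reduction to rank~$1$ hinges on a $G$-equivariant filtration of $\Ecal$ by subbundles whose successive quotients \emph{and intermediate terms} still satisfy the hypothesis~\eqref{eq:Ew} at wild points. You acknowledge this is delicate, but the difficulty is sharper than a gluing problem: Remark~\ref{rmk:higher-rk-normal-basis} shows that an $\Ocal_v[I_w]$-free semilinear representation need not decompose as a direct sum of fractional ideals, so once you pass to the quotient $\Ecal/\Ecal_1$ there is no reason its completion at a wild point is of the form $\bigoplus_i\mfr_w^{-n'_{w,i}}$. Without this, neither the inductive step nor your additivity claim for $[W_G(-)]$ (which presupposes the local data ``splits'') is available. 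The proposed fallback via an equivariant splitting principle is not a fix either: the formula depends on the actual local invariants $l_{w,i}$ of genuine line bundles on $X_L$, not merely on a virtual class in $\Krm_0$.

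The paper avoids filtrations entirely and reduces in a different direction: it works in $\Krm_0(\bar kG)$ via Brauer characters, and for each $g\in G^{\preg}\setminus\{1\}$ it \emph{restricts} to the cyclic prime-to-$p$ subgroup $H=\langle g\rangle$, where $\pi_H\colon X_L\to X_{L^H}$ is tame. A Mackey-formula computation (Lemma~\ref{lem:Nakajima}) shows that $[N(\pi)]|_H$ and $[W_G(\Ecal)]|_H$ agree with $[N(\pi_H)]$ and $[W_H(\Ecal)]$ modulo $[kH]$, so Nakajima's \emph{higher-rank tame} theorem gives the Brauer character away from $1$. Lemma~\ref{lem:st-eq} then pins the formula down to a multiple of $[kG]$, and that multiple is identified by taking $G$-invariants (using $\big(\chi_{kG}(\Ecal)\big)^G=\chi_k(\Ecal^G)$ and $[N(\pi)]^G=[W_G(\Ecal)]^G=0$). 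The key external input is thus Nakajima's tame result rather than the rank-$1$ theorem of Fischbacher-Weitz--K\"ock.
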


Note that formula \eqref{eq:EquivRR} is generalises the rank-$1$ case stated in \cite[\S3, Theorem~12]{FischbacherWeitz-Koeck:EquivRR}, which is built upon \cite[Theorem~4.5]{Koeck:EquivRR}.
When $k=\bar k$ and $\pi$ is tame, then S.~Nakajima \cite[Theorem~2]{Nakajima:GalMod-tame} obtained \eqref{eq:EquivRR} modulo $\Ind_1^G\big(\chi_k(\Ecal^G)\big)$.

Before we give a proof, let us make a few remarks.
\begin{rmk}
    As the notation suggests, $[N(\pi)]$ and $[W_G(\Ecal)]$ respectively come from finitely generated projective $kG$-modules $N(\pi)$ and $W_G(\Ecal)$. Indeed, this is a byproduct of the rank-$1$ case of the formula; \emph{cf.} Theorem~11 and Theorem~12(a) in \cite[\S3]{FischbacherWeitz-Koeck:EquivRR}. In particular, formula~\eqref{eq:EquivRR} holds in $\Krm_0(kG)$ since $\Krm_0(kG)$ is torsion-free (being a free abelian group). In the intended application, we only need formula~\eqref{eq:EquivRR} in $\Krm_0(kG)\otimes\QQ$.
\end{rmk}

\begin{rmk}\label{rmk:EquivRR-p-group}
    In the setting of Theorem~\ref{th:EquivRR}, if we have $I_w = P_w$ for all $w\in Z^{\ram}_L$ (e.g., if $G$ is a $p$-group), then clearly both $[N(\pi)]$ and $[W_G(\Ecal)]$ are trivial so formula~\eqref{eq:EquivRR} reduces to
    \[\chi_{kG}(\Ecal) = \Ind_1^G\big(\chi_k(\Ecal^G)\big).\] 
\end{rmk}

\begin{rmk}
    Applying the Hirzeburch--Riemann--Roch theorem to compute $\chi_k(\Ecal^G)$ (\emph{cf.} Theorem~(4.11) and Exa~4.1.1 in \cite[Appendix~A]{hartshorne}), we obtain \[\Ind_1^G\big(\chi_k(\Ecal^G)\big) = \left((\rk\Ecal)\cdot(1-\gen_K) + \deg(\Ecal^G)\right)[kG],\] where $\gen_K$ is the genus of $X$. (Note also that $\rk(\Ecal^G) = \rk\Ecal$.)
\end{rmk}

\begin{exa}
\setcounter{equation}{\value{equation}-1}
\begin{subequations}
    The local assumption \eqref{eq:Ew} may look artificial, but it is a common generalisation of the tame case (\emph{cf.} Lemma~\ref{lem:tame-inertial-action}) and the following class of examples.
    Let $\Fcal$ be a rank-$d$ vector bundle on $X$, and let $D_L=\sum_{w\in |X_L|}n_w w$ be a $G$-equivariant divisor of $X_L$. Then $\Ecal\coloneqq(\pi^\ast\Fcal)(D_L)$ is a $G$-equivariant vector bundle and we clearly have a $G_w$-equivariant $\Ocal_w$-linear isomorphism 
\begin{equation}\label{eq:nw-div-twist}
    \widehat\Ecal_w \cong (\mfr_w^{-n_w})^{\oplus d},
\end{equation}
    as we have a natural $G_w$-equivariant isomorphism $(\pi^\ast\Fcal)\widehat{_w} \cong \widehat\Fcal_{\pi(w)}\otimes_{\Ocal_{\pi(w)}}\Ocal_w$. Furthermore, 
    we have $\Ecal^G \cong \Fcal(D_K)$ where $D_K$ satisfies $\big(\Ocal_{X_L}(D_L)\big)^G = \Ocal_X(D_K)$. More explicitly, we can apply \cite[Lemma~1.4(a)]{Koeck:EquivRR} to obtain $D_K = \sum_{v\in |X|} n_v v$ where for each $v\in|X|$ we set $n_v = -1+\lceil\frac{n_{\tilde v}+1}{|I_{\tilde v}|} \rceil$ for some (or equivalently, any) $\tilde v\in \pi^{-1}(v)$.

    Now, assume that $\pi$ is weakly ramified everywhere. Then Theorem~\ref{th:EquivRR} can be applied to $\Ecal=(\pi^\ast\Fcal)(D_L)$ provided that the coefficient $n_w$ at each $w\in Z_L^{\wild}$ satisfies $n_w\equiv -1\bmod{|P_w|}$, in which case we have $n_{w,i}=n_w$ for all $i$. 

    Let us further specialise to the case when $\Ecal = (\pi^\ast\Fcal)(-Z_L)$ for some reduced $G$-stable closed subscheme $Z_L\subset X_L$ containing $Z_L^{\ram}$. In that case, we have $l_{w,i} = |I_w/P_w|-1$ for any $w\in Z_L^{\ram}$ and $i$, and $\Ecal^G = \Fcal(-Z)$ where $Z\subset X$ is the reduced image of $Z_L$. Therefore, formula~\eqref{eq:EquivRR} reduces to the following
    \begin{align}\label{eq:EquivRR-intended-case}
            \chi_{kG}\big((\pi^\ast\Fcal)(-Z_L)\big) = & \frac{\rk\Fcal}{|G|}\sum_{w\in Z_L^{\ram}}\sum_{j=1}^{|I_w/P_w|-1}j|P_w|\cdot\left[\Ind_{I_w}^G \big(M_w(-j)\big)\right] \\ 
            &+ \left((\rk\Fcal)\cdot(1-\gen_K-\deg(Z)) + \deg(\Fcal)\right)[kG].\notag
    \end{align}
\end{subequations}
 \end{exa}

%
%

For the rest of the section we give a proof of Theorem~\ref{th:EquivRR}. 

\begin{proof}[\textbf{\emph{Proof of Claim~(\ref{th:EquivRR:perf}) of Theorem~\ref{th:EquivRR}}}]
The proof is essentially contained in the proof of Theorem~2.1(a) in \cite{Koeck:EquivRR}, which is the rank-$1$ case of our statement. Indeed, by K\"ock's theorem (Theorem~\ref{th:int-normal-basis}) our assumption \ref{eq:Ew} implies that $\widehat\Ecal_w$ is cohomologically trivial for $I_w$ for any $w\in |X_L|$, so by Proposition~\ref{prop:Noether} it follows that the following completed stalk
\[
    (\pi_\ast\Ecal)\widehat{_v} \cong \bigoplus_{w\in\pi^{-1}(v)} \widehat\Ecal_w \cong \Ind_{G_{\tilde v}}^G \widehat \Ecal_{\tilde v},
\]
is $\Ocal_v[G]$-free at any $v\in |X|$. (Here, $\tilde v\in\pi^{-1}(v)$ is any point above $v$.) By the standard result (\emph{cf.} \cite[Corollary~(76.9)]{CurtisReiner:AMSbook}), the Zariski stalk $(\pi_\ast\Ecal)_v$ is $\Ocal_{X,v}[G]$-free, so we may apply \cite[Theorem~1.1]{Chinburg:dR-tame} to obtain $kG$-perfectness of  $\RGamma(X_L,\Ecal)\cong \RGamma(X,\pi_\ast\Ecal)$. Finally, $\RGamma(X_L,\Ecal)$ can be represented by a two-term perfect $kG$-complex thanks to the cohomology vanishing outside degrees $[0,1]$.
\end{proof}

\subsection*{Digressions to Brauer characters}
Even if $|G|$ is not invertible in $k$, there is a version of character theory for finitely generated $\bar kG$-modules; namely, \emph{Brauer characters}. (Here, $\bar k$ denotes the algebraic closure of $k$.) We will recall the minimal background needed in the proof, and for further details we refer to \cite[\S18]{CurtisReiner:Methods1} or \cite[Ch~18]{Serre:LinRepFinGp}.

We choose a complete discrete valuation field $E$ of characteristic $0$ with residue field $\bar k$. For any subset $S \subset G$ stable under conjugation, set
\begin{equation}
    \Cl_E(S)\coloneqq \{\phi\colon S\to E \mid \phi \text{ is invariant under conjugation}\}.
\end{equation}

Let $G^{\preg}$ denote the set of elements of $G$ with prime-to-$p$ order, which is stable under conjugation. Then for any finitely generated $\bar kG$-module $M$, let $\Bch_M\in\Cl_E(G^{\preg})$ denote the \emph{Brauer character} in the sense of \cite[Def~(17.4)]{CurtisReiner:Methods1}. It is easy to check that $\Bch_M(1)=\dim_{\bar k}(M)$ and $\Bch_{\bar kG}(g) = 0$ for any $g\ne 1$. If $M$ admits a lift to an $EG$-module $\widetilde M_E$ (e.g., if $M$ is projective), then we have $\Bch_M = \ch_{\widetilde M_E}|_{G^{\preg}}$, which we take as a working definition; \emph{cf.} \cite[Proposition~(17.5)(iv)]{CurtisReiner:Methods1}.\footnote{One may refer to \cite[\S18.1]{Serre:LinRepFinGp} instead, where $\Bch_M$ is called the \emph{modular character}.} 

Let us now recall a few basic properties. The construction of Brauer characters naturally extends to the Grothendieck group $\Gbf_0(\bar kG)$ of finitely generated $\bar kG$-modules, inducing an isomorphism $\Gbf_0(\bar kG)\otimes E\to \Cl_E(G^{\preg})$; \emph{cf.} \cite[Theorem~(17.9)]{CurtisReiner:Methods1}. There is a natural homomorphism $c\colon \Krm_0(\bar kG)\to\Gbf_0(\bar kG)$, which turns out to be injective with finite cokernel; \emph{cf.} \cite[Theorem~(21.22)]{CurtisReiner:Methods1}. By abuse of notation, for $[M]\in\Krm_0(\bar kG)$ we let $\Bch_{[M]}$ denote $\Bch_{c([M])}$. As $\Krm_0(\bar kG)$ and $\Gbf_0(\bar kG)$ are free abelian groups, we obtain the following:
\begin{prop}\label{prop:inj-cartan}
    The homomorphism $\Bch_{(-)}\colon\Krm_0(\bar kG)\to \Cl_E(G^{\preg})$ is injective.
\end{prop}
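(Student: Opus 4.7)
The plan is to observe that the Brauer character map $\Bch_{(-)}\colon \Krm_0(\bar kG)\to \Cl_E(G^{\preg})$ factors, by its very definition $\Bch_{[M]} = \Bch_{c([M])}$, as the composition
\[
\Krm_0(\bar kG) \xrightarrow{\ c\ } \Gbf_0(\bar kG) \xrightarrow{\ \Bch_{(-)}\ } \Cl_E(G^{\preg}).
\]
It therefore suffices to verify that each of these two maps is injective.

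For the first map, injectivity of the Cartan homomorphism $c$ is exactly the statement recalled from \cite[Theorem~(21.22)]{CurtisReiner:Methods1}. For the second map, the paper already recalled that the induced map $\Gbf_0(\bar kG)\otimes E\to \Cl_E(G^{\preg})$ is an isomorphism (\cite[Theorem~(17.9)]{CurtisReiner:Methods1}); hence it is enough to know that the canonical map $\Gbf_0(\bar kG)\to \Gbf_0(\bar kG)\otimes E$ is injective, which holds because $\Gbf_0(\bar kG)$ is free abelian (hence torsion free) and $E$ is of characteristic $0$.

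Combining the two injections yields injectivity of $\Bch_{(-)}$ on $\Krm_0(\bar kG)$. There is no real obstacle here: the statement is essentially a book-keeping consequence of the facts already collected in the paragraph preceding the proposition, and the only thing worth emphasising in the written proof is the factorisation through $c$, after which one invokes the two cited results.
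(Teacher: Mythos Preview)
Your proof is correct and follows essentially the same approach as the paper: factor $\Bch_{(-)}$ through the Cartan map $c$, invoke injectivity of $c$ from \cite[Theorem~(21.22)]{CurtisReiner:Methods1}, and use that $\Gbf_0(\bar kG)$ is free abelian together with the isomorphism $\Gbf_0(\bar kG)\otimes E\cong \Cl_E(G^{\preg})$. The paper in fact does not give a separate proof, simply noting that the proposition follows from the facts collected in the preceding paragraph.
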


We record the following lemma, which should be well known.
\begin{lem}\label{lem:st-eq}
    For finitely generated \emph{projective} $\bar kG$-modules $M$ and $M'$, we have $\Bch_M(g)=\Bch_{M'}(g)$ for any $g\ne 1$ if and only if $[M]-[M']$ is an integer multiple of $[\bar kG]$ in $\Krm_0(\bar kG)$.
\end{lem}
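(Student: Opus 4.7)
The \emph{if} direction follows immediately from the linearity of $\Bch_{(-)}$: lifting $\bar kG$ to the regular $EG$-module gives $\Bch_{\bar kG}(g) = |G|\cdot\delta_{g,1}$ on $G^{\preg}$, so $[M]-[M'] = n[\bar kG]$ yields $\Bch_M(g)=\Bch_{M'}(g)$ for every $g\ne 1$.

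For the \emph{only if} direction, by the injectivity established in Proposition~\ref{prop:inj-cartan}, it suffices to produce an integer $n$ with $\Bch_M - \Bch_{M'} = n\cdot \Bch_{\bar kG}$. The hypothesis gives $\Bch_M - \Bch_{M'} = c\cdot \delta_1$ on $G^{\preg}$, where $c = \dim_{\bar k} M - \dim_{\bar k} M'$, so since $\Bch_{\bar kG} = |G|\cdot \delta_1$ on $G^{\preg}$, the problem reduces to proving the divisibility $|G|\mid c$. To that end, I would lift $M$ and $M'$ to finitely generated projective $EG$-modules $\widetilde M_E$ and $\widetilde M'_E$ and invoke the classical theorem of Brauer that the ordinary character of any projective $EG$-module vanishes on every $p$-singular element of $G$. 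Then $\chi_{\widetilde M_E} - \chi_{\widetilde M'_E}$ agrees with $c\cdot\delta_1$ on $G^{\preg}$ (by hypothesis) and vanishes on its complement (by Brauer's vanishing), so it equals $c\cdot\delta_1$ as a virtual character on all of $G$. Pairing this virtual character against the trivial character via the standard orthogonality relations forces $c/|G|\in\ZZ$, as required; setting $n\coloneqq c/|G|$ then gives $\Bch_M - \Bch_{M'} = n\cdot \Bch_{\bar kG}$, and injectivity of $\Bch_{(-)}$ yields $[M]-[M']=n[\bar kG]$.

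The only delicate point is the divisibility $|G|\mid \dim M - \dim M'$. Without the vanishing of projective characters on $p$-singular elements, one only controls $\Bch_M - \Bch_{M'}$ on $G^{\preg}$, and no relation between $\dim M - \dim M'$ and $|G|$ would follow from the injectivity of Brauer characters alone; this is where the projectivity hypothesis genuinely enters via the lift to characteristic zero.
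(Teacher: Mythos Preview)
Your proof is correct but follows a different path from the paper. The paper also reduces to showing $|G|$ divides $c = \dim_{\bar k} M - \dim_{\bar k} M'$, but argues prime by prime: for each prime $\ell$ dividing $|G|$ it restricts $[N]=[M]-[M']$ to a Sylow $\ell$-subgroup $G_\ell$. When $\ell\ne p$ the subgroup $G_\ell$ lies inside $G^{\preg}$, so the restricted Brauer character vanishes identically and ordinary character theory (the multiplicity of the trivial representation) gives $|G_\ell|\mid c$; when $\ell=p$ the paper uses only that $\bar k[G_p]$ is a local ring, so every projective $\bar k[G_p]$-module is free and $|G_p|\mid c$ automatically. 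Your route via Brauer's theorem that characters of projective modules vanish on $p$-singular elements is more direct and conceptually cleaner---it extends the virtual character to all of $G$ in one stroke and then reads off the divisibility from a single inner product---whereas the paper's approach is more elementary in that it avoids that theorem entirely, relying instead on the structure of $\bar k[G_p]$ for the $p$-part.
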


\begin{proof}
    Suppose that an element $[N]\in\Krm_0(\bar kG)$ satisfies $\Bch_{[N]}(g) = 0$ for any $g\ne 0$. We first claim that $|G|$ divides $\Bch_{[N]}(1)=\dim_{\bar k}([N])$. The lemma easily follows from this claim for $[N] = [M]-[M']$ via Proposition~\ref{prop:inj-cartan}.

    For each prime divisor $\ell$ of $|G|$, choose a Sylow $\ell$-subgroup $G_\ell$ of $G$. Note that the restriction $[N]|_{G_\ell}$ defines an element in $\Krm_0(\bar k[G_\ell])$ as the restriction preserves projectivity, and we have $\Bch_{[N]|_{G_\ell}} = (\Bch_{[N]})|_{G_\ell}$.
    
    If $\ell \ne p$ then we have $G_\ell\subset G^{\preg}$ and $\Bch_{[N]}|_{G_\ell} = \ch_{[\widetilde N_E]}|_{G_\ell}$ where $[\widetilde N_E]\in \Krm_0(EG)$ is the lift of $[N]$. By the standard character theory in characteristic~$0$, we have $\dim_E([\widetilde N_E]^{G_\ell})  = \dim_{\bar k}([N])/|G_\ell|$, which is an integer. If $\ell = p$ then $\Krm_0(\bar k[G_p])$ is the free abelian group generated by $[\bar k[G_p]]$ as $\bar k[G_p]$ is a local ring. In particular, $|G_p|$ divides $\dim_{\bar k}([N])$. This shows that $|G|$ divides $\dim_{\bar k}([N])$.
\end{proof}

\begin{rmk}\label{rmk:st-eq}
    Lemma~\ref{lem:st-eq} cannot be extended to $\Gbf_0(\bar kG)$ in general. For example, if $G$ admits a \emph{normal} Sylow $p$-subgroup $P\ne \{1\}$, then we have $\Bch_{\bar k[G/P]}(g) = 0$ for any $g\in G^{\preg}\setminus\{1\}$. Then the Brauer characters of $\bar k[G/P]^{\oplus|P|}$ and $\bar kG$ coincide, but $\bar k[G/P]$ is not even projective as a $\bar kG$-module.
\end{rmk}

\subsection*{Proof of Claim~(\ref{th:EquivRR:formula}) of Theorem~\ref{th:EquivRR}: The case where $k$ is algebraically closed}
In the setting of Theorem~\ref{th:EquivRR}, assume that $k=\bar k$ so we have $G_w = I_w$. Choose $H =\langle g \rangle$ for some $g\in G^{\preg}\setminus\{1\}$, a cyclic subgroup of prime-to-$p$ order.  Let $\pi_H\colon X_L\to X_{L^H}$ denote the natural projection, which is a everywhere tame $H$-covering. 

The following lemma generalises the tame case \cite[p~120]{Nakajima:GalMod-tame}, and the main idea of proof can be read off from the proof of Theorem~3.1 and Theorem~4.3 of \cite{Koeck:EquivRR}.

\begin{lem}\label{lem:Nakajima}
    In the above setting, we have the following elements in $\Krm_0(kH)$
    \[[N(\pi)]|_H - [N(\pi_H)] \ \text{and} \ [W_G(\Ecal)]|_H - [W_H(\Ecal)] \]
    are integer multiples of $[kH]$.
\end{lem}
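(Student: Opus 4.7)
The plan is to invoke the injectivity of the Brauer character map (Proposition~\ref{prop:inj-cartan}) together with the criterion of Lemma~\ref{lem:st-eq}: since both differences lie in $\Krm_0(kH)$ (being integer combinations of projective modules), it suffices to verify that the Brauer characters of $[N(\pi)]|_H$ and $[N(\pi_H)]$, respectively of $[W_G(\Ecal)]|_H$ and $[W_H(\Ecal)]$, agree at every $h \in H \setminus \{1\}$. All such $h$ are $p$-regular since $|H|$ is prime to $p$.

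A crucial pointwise input is the identity $\Bch_{M_w(j)}(h) = \theta_w^j(h)$ for $h \in I_w$ of prime-to-$p$ order with $h \neq 1$. By Remark~\ref{rmk:Cw-choices}, $h$ is $I_w$-conjugate to some $c \in C_w$, and since both $\Bch_{M_w(j)}$ and $\theta_w^j$ are class functions on $I_w$ (the latter because $\theta_w$ factors through $I_w/P_w$), one reduces to the case $h = c \in C_w \setminus \{1\}$; for this, the explicit decomposition $M_w(j)|_{C_w} \cong (\mfr_w^j/\mfr_w^{j+1}) \oplus k_w[C_w]^{\oplus(|P_w|-1)/|C_w|}$ from Lemma~\ref{lem:proj-cover} immediately yields $\theta_w^j(c)$. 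Combining with the Brauer induction formula for $\Ind_{I_w}^G$ and reindexing the double sum $\sum_w \sum_{w' \in Gw,\, h \in I_{w'}}$ as a single sum over $\{w'' \in Z_L^{\ram} : h \in I_{w''}\}$ gives
\begin{gather*}
    \Bch_{[N(\pi)]|_H}(h) = \sum_{\substack{w \in Z_L^{\ram}\\ h \in I_w}}\tfrac{1}{|C_w|}\sum_{j=1}^{|C_w|-1}j\,\theta_w^j(h),\\
    \Bch_{[W_G(\Ecal)]|_H}(h) = \sum_{\substack{w \in Z_L^{\ram}\\ h \in I_w}}\sum_{i}\sum_{j=1}^{l_{w,i}(\pi)}\theta_w^{-j}(h),
\end{gather*}
together with exact analogues for $\pi_H$ obtained by replacing $|C_w|$ with $|H \cap I_w|$ (the tame inertia of $\pi_H$ at $w$) and $l_{w,i}(\pi)$ with $l_{w,i}(\pi_H)$. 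The index sets coincide, since $h \in I_w$ automatically gives $H \cap I_w \ni h \neq 1$.

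For the $[N]$-claim, the matching is immediate from the closed form $\tfrac{1}{N}\sum_{j=1}^{N-1}j\zeta^j = \tfrac{1}{\zeta-1}$, valid for any $N$-th root of unity $\zeta \neq 1$: applied at $\zeta = \theta_w(h)$ (nontrivial since $h \notin P_w$), the resulting local contribution $1/(\theta_w(h)-1)$ is independent of whether $N$ is $|C_w|$ or $|H \cap I_w|$. For the $[W]$-claim, use $\sum_{j=1}^l \zeta^{-j} = (1-\zeta^{-l})/(\zeta-1)$; this depends on $l$ only modulo the order $m$ of $\zeta$, so the matching reduces to the congruence $l_{w,i}(\pi) \equiv l_{w,i}(\pi_H) \pmod m$. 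Writing $n_{w,i}(\pi) = |P_w|q - 1$ and unwinding the two definitions, this becomes $(|P_w|-1)q \equiv 0 \pmod m$.

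The main obstacle, and the heart of the argument, is thus the purely group-theoretic claim $|P_w| \equiv 1 \pmod{|C_w|}$ (which is stronger than needed, since $m \mid |H \cap I_w| \mid |C_w|$). The plan is to derive this from Lemma~\ref{lem:Cw}: weak ramification forces $P_w$ to be elementary abelian, say $P_w \cong \FF_p^d$, and $C_w$ is cyclic of prime-to-$p$ order acting faithfully by conjugation, giving a cyclic embedding $C_w \hookrightarrow \GL_d(\FF_p)$. Over $\FFpbar$ this representation diagonalises and a character of full order $|C_w|$ must appear; a primitive $|C_w|$-th root of unity in $\FFpbar$ lies in $\FF_{p^e}$ for some $e \leqslant d$ (its Frobenius orbit spans an $\FF_p$-subspace of $\FF_p^d$ of dimension $e$), so $|C_w| \mid p^e - 1 \mid p^d - 1 = |P_w| - 1$. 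This yields the required congruence and completes the verification of both Brauer character identities, hence the lemma.
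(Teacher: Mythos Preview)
Your approach is correct in spirit and genuinely different from the paper's. The paper works directly in $\Krm_0(kH)$: it applies the Mackey formula to $[\Ind_{I_w}^G M_w(j)]|_H$, then uses the module decomposition \eqref{eq:proj-cover} (Lemma~\ref{lem:proj-cover}) together with $k[H_w]\cong\bigoplus_j(\theta_w^H)^j$ to match terms modulo $[kH]$; for $[N(\pi)]$ it groups the sum over $j$ according to residues mod $|H_w|$. You instead evaluate Brauer characters pointwise at each $h\in H\setminus\{1\}$ and reduce everything to the closed-form identities $\tfrac{1}{N}\sum_{j=1}^{N-1}j\zeta^j=\tfrac{1}{\zeta-1}$ and $\sum_{j=1}^l\zeta^{-j}=\tfrac{1-\zeta^{-l}}{\zeta-1}$, which is a clean and elementary alternative. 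Your reindexing of the induced-character sum into $\sum_{w:\,h\in I_w}$ is correct, and the matching of index sets for $\pi$ and $\pi_H$ is fine.

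There is, however, a genuine gap in your final paragraph. From faithfulness of the $C_w$-action on $P_w\cong\FF_p^d$ alone one \emph{cannot} conclude that an eigenvalue of full order $|C_w|$ appears: for a cyclic group the order of a diagonalisable element is the lcm of the orders of its eigenvalues, not the maximum. For instance, $\ZZ/12$ acts faithfully on $\FF_5^3$ with one eigenvalue of order $4$ (in $\FF_5$) and a Frobenius-conjugate pair of order $3$ (in $\FF_{25}$), yet $12\nmid 5^3-1$. So the deduction $|C_w|\mid |P_w|-1$ fails in that abstract setting.

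The fix is immediate and already in your hands. The decomposition $M_w(j)|_{C_w}\cong(\mfr_w^j/\mfr_w^{j+1})\oplus k_w[C_w]^{\oplus(|P_w|-1)/|C_w|}$ from Lemma~\ref{lem:proj-cover}, which you invoke earlier to compute $\Bch_{M_w(j)}$, already forces $(|P_w|-1)/|C_w|\in\ZZ$. Equivalently, Remark~\ref{rmk:Cw-choices} shows $C_w\cap gC_wg^{-1}=\{1\}$ for $g\in P_w\setminus\{1\}$, so the $C_w$-action on $P_w\setminus\{1\}$ is \emph{free} (not merely faithful), whence $|C_w|\mid|P_w|-1$ by orbit counting. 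Either observation replaces your last paragraph and completes the proof.
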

\begin{proof}
    For any $w\in Z_L^{\ram}$, let $H_w\coloneqq I_w\cap H$.  Then by the Mackey formula \cite[Theorem~(44.2)]{CurtisReiner:AMSbook}\footnote{When all the modules involved are projective (as in our setting), one may alternatively obtain the mod~$p$ Mackey formula by lifting to characteristic~$0$.}, we have
\begin{align*}
    [\Ind_{I_w}^G M_w(j)]|_H &= \sum_{s\in H\backslash G/I_w} \left[\Ind^H_{H_{sw}} \big(M_{sw}(j)|_{H_{sw}}\big) \right]
    \\&= \frac{1}{|H|}\sum_{s\in G/I_w} |H_{sw}| \cdot \left[\Ind^H_{H_{sw}} \big(M_{sw}(j)|_{H_{sw}}\big)\right].
\end{align*}
Choosing $C_{sw}$ to contain $H_{sw}$, which is possible by Remark~\ref{rmk:Cw-choices}, it follows from Lemma~\ref{lem:proj-cover} that 
\begin{equation}\label{eq:proj-cover}
    M_{sw}(j)|_{H_{sw}} \cong (\theta_{sw}^H)^j \oplus (k[H_{sw}])^{\oplus\frac{|P_{sw}|-1}{|C_{sw}|}[C_{sw}:H_{sw}]},
\end{equation}
where $\theta^H_{sw} = \theta_{sw}|_{H_{sw}}$.

Let $n_{w,i}^H = l_{w,i}^H$ be the integers defined in Lemma~\ref{lem:tame-inertial-action} for $\Ecal$ viewed as an $H$-equivariant vector bundle. Then clearly we have 
\begin{equation}\label{eq:lw-H}
    l^H_{w,i}\equiv l_{w,i}\equiv n_{w,i} \bmod{|H_w|} 
\end{equation}
possibly up to reordering $l^H_{w,i}$'s. Therefore we have
\begin{align*}
    [W_G(\Ecal)]|_H & = \sum_{w\in Z_L^{\ram}}\frac{1}{[G:I_w]}\sum_{i=1}^{\rk\Ecal}\sum_{j=1}^{l_{w,i}}\left[\Ind_{I_w}^G \big(M_w(-j)\big)\right]|_H \\
    & =  \sum_{w\in Z_L^{\ram}}\frac{1}{[H:H_w]}\sum_{i=1}^{\rk\Ecal}\sum_{j=1}^{l_{w,i}}\left[\Ind_{H_w}^H \big(M_w(-j)|_{H_w}\big)\right] \\
    &\equiv \sum_{w\in Z_L^{\ram}}\frac{1}{[H:H_w]}\sum_{i=1}^{\rk\Ecal}\sum_{j=1}^{l^H_{w,i}}\left[\Ind_{H_w}^H \big((\theta_w^H)^{-j}\big)\right] = W_H(\Ecal) \bmod{[kH]},
\end{align*}
where the last congruence uses \eqref{eq:proj-cover}, \eqref{eq:lw-H} and $k[H_w]\cong\bigoplus_{j\in \ZZ/(|H_w|)}(\theta_w^H)^j$.

The computation of $[N(\pi)]|_H$ is quite similar except that we use
\[
\sum_{\substack{0\leqslant j <|I_w/P_w|\\ j\equiv j_0\bmod{|H_w|}}}j|P_w|\cdot \left[\Ind_{I_w}^G(M_w(j))\right]|_H \equiv j_0|I_w| \cdot \left[\Ind_{H_w}^H \big((\theta^H_w)^{j_0}\big)\right]\bmod{[kH]}
\]
for any $0\leqslant j_0<|H_w|$. 
\end{proof}

\begin{cor}\label{cor:EquivRR-alg-clos}
    Theorem~\ref{th:EquivRR}(\ref{th:EquivRR:formula}) holds if $k = \bar k$.
\end{cor}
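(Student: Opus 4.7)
The plan is to prove Theorem~\ref{th:EquivRR}(\ref{th:EquivRR:formula}) when $k=\bar k$ by reducing, via Brauer characters, to Nakajima's tame-cover result \cite[Theorem~2]{Nakajima:GalMod-tame}. Set
\[
\Delta \coloneqq \chi_{kG}(\Ecal) + (\rk\Ecal)[N(\pi)] - [W_G(\Ecal)] - \Ind_1^G\bigl(\chi_k(\Ecal^G)\bigr) \in \Krm_0(kG)\otimes\QQ;
\]
we want $\Delta=0$. Since $k=\bar k$ and $\Krm_0(kG)$ is torsion-free, Proposition~\ref{prop:inj-cartan} reduces this to showing that the Brauer character $\Bch_\Delta$ vanishes on all of $G^{\preg}$.

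For $g\in G^{\preg}\setminus\{1\}$, set $H\coloneqq\langle g\rangle$, which has prime-to-$p$ order. Since $P_w\cap H=\{1\}$ at every ramified point, the subcovering $\pi_H\colon X_L\to X_{L^H}$ is \emph{tame} everywhere, and Lemma~\ref{lem:tame-inertial-action} supplies the integers $\{n^H_{w,i}\}$ required to apply the theorem to $\pi_H$. Restriction along $kG\to kH$ commutes with $\chi$, so $\chi_{kG}(\Ecal)|_H=\chi_{kH}(\Ecal)$. Lemma~\ref{lem:Nakajima} gives $[N(\pi)]|_H\equiv [N(\pi_H)]$ and $[W_G(\Ecal)]|_H\equiv [W_H(\Ecal)]$ modulo $[kH]$, while $\Ind_1^G(\chi_k(\Ecal^G))|_H=[G:H]\,\chi_k(\Ecal^G)\,[kH]$ is visibly a multiple of $[kH]$. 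Thus
\[
\Delta|_H\equiv \chi_{kH}(\Ecal)+(\rk\Ecal)[N(\pi_H)]-[W_H(\Ecal)]\pmod{[kH]},
\]
and this right-hand side is exactly what Nakajima's theorem computes to lie in $\ZZ\cdot[kH]$ for the tame cover $\pi_H$. Hence $\Delta|_H\in\QQ\cdot[kH]$, and since $\Bch_{[kH]}(g)=0$ for $g\ne 1$, we conclude $\Bch_\Delta(g)=\Bch_{\Delta|_H}(g)=0$.

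It remains to verify $\Bch_\Delta(1)=0$. Since evaluating the Brauer character at the identity recovers the $k$-dimension, this amounts to the non-equivariant identity
\[
\chi_k(X_L,\Ecal)+(\rk\Ecal)\dim_kN(\pi)-\dim_kW_G(\Ecal)-|G|\cdot\chi_k(X,\Ecal^G)=0.
\]
I would prove this by combining the Hirzebruch--Riemann--Roch theorem on $X_L$ and on $X$, the Riemann--Hurwitz formula $2g_L-2=|G|(2g_K-2)+\sum_w\bigl((|I_w|-1)+(|P_w|-1)\bigr)$ (using that $\pi$ is weakly ramified), \cite[Lemma~1.4(a)]{Koeck:EquivRR} to relate $\deg\Ecal^G$ to the local invariants $n_{w,i}$ of $\widehat\Ecal_w$, together with the explicit dimension formulas for $N(\pi)$ and $W_G(\Ecal)$ read off from Theorem~\ref{th:EquivRR}.

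The main obstacle is this final arithmetic identity: although every local contribution is explicit in terms of $|I_w|$, $|P_w|$, $n_{w,i}$ and $l_{w,i}$, orchestrating them into a single clean cancellation requires care, especially at places where the inertia has nontrivial wild part. A conceptually cleaner alternative, which I would pursue if the direct computation proves too intricate, is to invoke the rank-$1$ case \cite[\S3, Theorem~12]{FischbacherWeitz-Koeck:EquivRR}, which is known to hold in $\Krm_0(kG)$ exactly (\emph{not} merely modulo $[kG]$), for an auxiliary $G$-equivariant line bundle on $X_L$ whose local invariants at each ramified $w$ agree with those of one of the rank-$1$ summands of $\widehat\Ecal_w$; additivity of every term on direct sums of bundles satisfying the local hypothesis would then force the rational constant $m$ with $\Delta=m[kG]$ to be $0$.
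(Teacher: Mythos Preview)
Your treatment of $g\in G^{\preg}\setminus\{1\}$ matches the paper exactly: restrict to $H=\langle g\rangle$, invoke Lemma~\ref{lem:Nakajima}, and apply Nakajima's tame result. The divergence is in how you pin down the remaining multiple of $[kG]$.

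The paper does \emph{not} evaluate the Brauer character at $g=1$. Instead, having established $\chi_{kG}(\Ecal)+( \rk\Ecal)[N(\pi)]-[W_G(\Ecal)]\in\ZZ\cdot[kG]$ via Lemma~\ref{lem:st-eq}, it applies the functor $(-)^G$ to both sides. One has $(\chi_{kG}(\Ecal))^G=\chi_k(\Ecal^G)$ tautologically, and a short Frobenius-reciprocity computation shows $[N(\pi)]^G=[W_G(\Ecal)]^G=0$: indeed $\bigl(\Ind_{I_w}^G M_w(j)\bigr)^G\cong\Hom_{C_w}(\triv,\theta_w^j|_{C_w})=0$ whenever $j\not\equiv 0\bmod|I_w/P_w|$, and only such $j$ appear in the definitions. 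This forces the coefficient of $[kG]$ to be $\chi_k(\Ecal^G)$ with no further work. The paper explicitly remarks (Remark~\ref{rmk:FWK}) that your proposed route---Riemann--Roch on both curves, Riemann--Hurwitz, and the local degree comparison $\deg(\Ecal)-|G|\deg(\Ecal^G)=\sum_{w,i}\bigl(|P_w|(l_{w,i}+1)-1\bigr)$---also succeeds, but is considerably more laborious; the $G$-invariants trick sidesteps all of it.

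Your fallback alternative via the rank-$1$ case and ``additivity on direct sums'' is not quite well-formed as stated: $\Ecal$ need not globally decompose as a direct sum of $G$-equivariant line bundles, so you cannot literally invoke additivity. One could try to manufacture auxiliary line bundles $\Lcal_i$ with the prescribed local invariants $n_{w,i}$ and compare $\Ecal$ with $\bigoplus_i\Lcal_i$, but then you must separately control both $\chi_{kG}(\Ecal)-\chi_{kG}(\bigoplus\Lcal_i)$ and $\chi_k(\Ecal^G)-\chi_k\bigl((\bigoplus\Lcal_i)^G\bigr)$, which reintroduces exactly the degree bookkeeping you were hoping to avoid. The $G$-invariants argument is both shorter and more robust.
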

\begin{proof}
     We first show that $\chi_{kG}(\Ecal) \equiv -\rk(\Ecal)[N(\pi)]+[W_G(\Ecal)] + c[kG]$ for some $c\in\ZZ$. 
     By Lemma~\ref{lem:st-eq}, we can proceed by comparing the values of Brauer characters at each $g\in G^{\preg}\setminus\{1\}$. Now applying Lemma~\ref{lem:Nakajima} to $H\coloneqq \langle g\rangle$ for any $g\in G^{\preg}\setminus\{1\}$, it suffices to prove the claim for $\pi_H\colon X_L\to X_{L^H}$ as we have $\chi_{kG}(\Ecal)|_{H} = \chi_{kH}(\Ecal)$. However, the claim for $\pi_H$ is already obtained in \cite[Theorem~2]{Nakajima:GalMod-tame}.

     Note that $\left(\chi_{kG}(\Ecal)\right)^G = \chi_k(\Ecal^G)$ in $\Krm_0(k)$. We next claim that $[N(\pi)]^G = [W_G(\Ecal)]^G = 0$; indeed,  we have by the Frobenius reciprocity that
     \[
    \left( \Ind_{I_w}^G(M_w(j))\right)^G \cong \left( \Ind_{C_w}^G(\theta_w^j|_{C_w})\right)^G \cong \Hom_{C_w}(\triv,\theta_w^j|_{C_w}),
     \]
     which is zero if and only if $j\not\equiv 0 \bmod{|I_w/P_w|}$.
     This shows that $c[kG] = [\Ind_1^G(\chi_k(\Ecal^G))]$, which implies formula \eqref{eq:EquivRR}.
\end{proof}
We are now ready to conclude the proof.
\begin{proof}[\textbf{\emph{Proof of Claim~(\ref{th:EquivRR:formula}) of Theorem~\ref{th:EquivRR}: The general case}}]
We now allow $k$ to be any perfect field. By injectivity of the scalar extension map $\Krm_0(kG)\to\Krm_0(\bar kG)$, we may verify formula \eqref{eq:EquivRR} in $\Krm_0(\bar kG)$.

Let $\bar \pi\colon \overline X_L\to\overline X$ denote the base change of $\pi$ to $\bar k$, and let $\overline \Ecal$ denote the pull back of $\Ecal$ to $\overline X_L$. We choose a connected component $\overline X_L^\circ$ of $\overline X_L$, with  $\bar\pi^\circ\coloneqq \bar\pi|_{\overline X_L^\circ}$. Let $G^\circ$ denote the stabiliser of $\overline X_L^\circ$. By Corollary~\ref{cor:EquivRR-alg-clos}, we have the following formula in $\Krm_0(\bar k[G^\circ])$
\[\chi_{\bar k[G^\circ]}\big(\overline\Ecal|_{\overline X_L^\circ}\big) = -(\rk\Ecal)\cdot[N(\bar\pi^\circ)] + [W_{G^\circ}\big(\overline\Ecal|_{\overline X_L^\circ}\big)] + \Ind_1^{G^\circ}\big(\chi_{\bar k}(\overline\Ecal^G)\big).\]
(Note that $\overline\Ecal^G = (\overline\Ecal|_{\overline X_L^\circ})^{G^\circ}$.) We next claim that $\Ind^G_{G^\circ}$ of the above formula coincides with the scalar extension to $\bar k$ of formula \eqref{eq:EquivRR} for $\Ecal$.

Firstly, the following equality holds in $\Krm_0(\bar kG)$
\[\chi_{kG}\big(\RGamma(X_L,\Ecal)\big)\otimes _k \bar k = \chi_{\bar kG}\big(\RGamma(\overline X_L,\overline \Ecal)\big) = \Ind_{G^\circ}^G\left(\chi_{\bar kG^\circ}\big(\RGamma(\overline X_L^\circ,\overline\Ecal|_{\overline X_L^\circ})\big)\right).\]
Similarly, we have $\Ind^G_{G^\circ}\Ind_1^{G^\circ}\big(\chi_{\bar k}(\overline\Ecal^G)\big) = \Ind_1^G\big(\chi_k(\Ecal^G)\otimes_k\bar k\big)$ in $\Krm_0(\bar k)$.
Lastly, it remains to show
\[
[N(\pi)]\otimes_k\bar k = \Ind_{G^\circ}^G[N(\bar\pi^\circ)]\quad\text{and}\quad [W_G(\Ecal)]\otimes_k\bar k = \Ind_{G^\circ}^G[W_{G^\circ}\big(\overline\Ecal|_{\overline X_L^\circ}\big)],
\]
which can be deduced from $(\mfr_w^j/\mfr_w^{j+1})\otimes_k\bar k \cong \bigoplus_{\bar w}\mfr_{\bar w}^j/\mfr_{\bar w}^{j+1}$ where $\bar w$ runs through the closed points in $\overline X_L$ over $w$. (More details can be found in the proof of Theorem~11 and Theorem~12 in \cite[\S3]{FischbacherWeitz-Koeck:EquivRR}.) Hence, formula \eqref{eq:EquivRR} is valid after the scalar extension to $\bar k$, as desired.
\end{proof}

\begin{rmk}\label{rmk:FWK}
Assume that $k=\bar k$, and let $\pi\colon X_L\to X$ be a connected $G$-cover over $k$, not necessarily weakly ramified everywhere. Then for any $G$-equivariant vector bundle $\Ecal$ (for which $\RGamma(X_L,\Ecal)$ may not be a perfect $kG$-complex), K\"ock \cite[Theorem~3.1]{Koeck:EquivRR} showed\footnote{This formula is generalised to the case where $k$ is any perfect field in \cite[\S4, Theorem~16]{FischbacherWeitz-Koeck:EquivRR}, but there is an obvious typo in the coefficient of $\Bch_kG$ that makes it inconsistent with \cite[Theorem~3.1]{Koeck:EquivRR} when $k=\bar k$.}
\[
\sum_i(-1)^i\Bch_{\Hrm^i(X_L,\Ecal)} = c'\cdot\Bch_{kG} - \frac{1}{|G|}\sum_{w\in|X|}|P_w|\sum_{j=1}^{|I_w/P_w|-1}j\cdot\Ind_{I_w}^G\big(\Bch_{\mfr_w^j\widehat\Ecal_w/\mfr_w^{j+1}\widehat\Ecal_w}\big),
\]
where \[c' = 1+\gen_K + \frac{1}{|G|}\deg(\Ecal)-\frac{\rk\Ecal}{2|G|}\sum_{w\in|X_L|}\left(([I_w:P_w]-1)(|P_w|+1)+\sum_{s\geqslant2}(|I_{w,s}|-1)\right).\]
By Proposition~\ref{prop:inj-cartan}, the verification of formula \eqref{eq:EquivRR} over $\bar k$ reduces to comparing with the Brauer character of the right hand side of \eqref{eq:EquivRR} with the above formula. (This is how the rank-$1$ case of Theorem~\ref{th:EquivRR}(\ref{th:EquivRR:formula}) is proved when $k=\bar k$ in \cite[Theorem~4.3]{Koeck:EquivRR}.) In our proof of Theorem~\ref{th:EquivRR}(\ref{th:EquivRR:formula}), we study the value of $\Bch_{\chi_{kG}(\Ecal)}(g)$ at any $g\ne 1$ and concluded the proof via a simple and conceptual argument using $(\chi_{kG}(\Ecal))^G = \chi_k(\Ecal^G)$ and Lemma~\ref{lem:st-eq}. This proof avoids evaluating the Brauer character at $g=1$. Alternatively, one can explicitly compare the Brauer character value at $g=1$ for both sides of \eqref{eq:EquivRR}, using the theorems of Hirzeburch--Riemann--Roch and Riemann--Hurwitz, as well as the following computation
\[\deg(\Ecal)-|G|\deg(\Ecal^G) = \sum_{w\in Z_L^{\ram}}\sum_{i=1}^{\rk\Ecal}\left(|P_w|(l_{w,i}+1)-1\right),\]
which equals $\sum_{w\in Z_L^{\ram}}\sum_{i=1}^{\rk\Ecal}n_{w,i}$ if we have chosen $0\leqslant n_{w,i}<|I_w|$ for all $w\in Z_L^{\ram}$ and $i$.
This way, one can also show the compatibility of \eqref{eq:EquivRR} with \cite[Theorem~3.1]{Koeck:EquivRR} over $\bar k$. (We note that the coefficient $c'$ of $\Bch_{kG}$ in the above formula may not coincide with the coefficient $c$ of $[kG]$ in $\Krm_0(kG)$ due to the subtlety explained in Remark~\ref{rmk:st-eq}.)

%
\end{rmk}

\section{Relative $\Krm$-theory of group rings}\label{sec:K-thy}
In this section, we review the Euler characteristics in the relative $\Krm_0$-group (\emph{cf.}~Def~\ref{def:Euler-Char-Rel-K0}), and obtain some explicit computation for equivariant vector bundles (\emph{cf.} Corollary~\ref{cor:psi-part}).

Let us consider the following general setup. Let $E$ be either a number field or a (possibly archimedean) local field of characteristic~$0$, and fix a Dedekind domain $R$ contained in $E$. (We usually set $E=\Frac(R)$, but we also allow $E=\RR$ and $R=\ZZ$.) We fix a finite-dimensional semi-simple $E$-algebra $A$ together with an $R$-order $\Afr\subset A$. (In the intended setting, we consider $\Afr = RG$ and $A = EG$ for a finite group $G$.) Thanks to the assumption on $E$, the reduced norm map $\Krm_1(A)\to \zeta(A)^\times$ is injective, where $\zeta(A)$ is the centre of $A$. We  view $\Krm_1(A)$ as a subgroup of $\zeta(A)^\times$; \emph{cf.} \cite[Theorem~(45.3)]{CurtisReiner:Methods2}. \\[-6pt]

We recall the explicit description of the abelian group $\Krm_0(\Afr,A)$ in terms of generators and relations, following \cite[p~215]{Swan:K-thy}:
\begin{description}
    \item[generators] the isomorphism classes of triples $[P,\theta,Q]$, where $P$ and $Q$ are finitely generated projective $\Afr$-modules, $\theta\colon E\otimes_R P \riso E\otimes_R Q$ is an isomorphism of $A$-modules, with respect to the obvious notion of isomorphisms (\emph{cf.} \cite[p~214]{Swan:K-thy});
    \item[relations] generated by the following
\begin{align}\label{eq:Rel-K0-relations}
    [P,\theta,Q]+[P',\theta',Q'] &=[P\oplus P',\theta\oplus\theta',Q\oplus Q'] \quad\text{and}\\
    [P, \theta, P']+[P',\theta',P''] &= [P, \theta'\circ\theta,P'']. \notag
\end{align} 
\end{description}

\begin{subequations}
The relative $K_0$-group fits into the natural \emph{localisation sequence} (\emph{cf.} \cite[Theorem~15.5]{Swan:K-thy}; in particular, we have the following connecting homomorphism
\begin{equation}\label{eq:conn-hom}
    \partial_{\Afr,E}\colon \xymatrix@1{\Krm_1(A) \ar[r]& \Krm_0(\Afr,A)},
\end{equation}
which, in our setting, turns out to be the restriction of the following map:
\begin{equation}\label{eq:conn-hom-delta}
    \delta_{\Afr, E}\colon \xymatrix@1{\zeta(A)^\times \ar[rr]^-{a\mapsto[\Afr,a,\Afr]} && \Krm_0(\Afr,A)}.
\end{equation}
\end{subequations}

\begin{exa}\label{exa:rel-K0-PID}
    Suppose that $\Afr$ is a PID and $A=\Frac\Afr$ is a finite extension of $E$. Then the natural map $A^\times\to \Krm_0(\Afr,A)$, sending $a\in A^\times$ to $[\Afr,a,\Afr]$, induces an isomorphism
    \[
    \xymatrix@1{A^\times/\Afr^\times \ar[r]^-{\cong} & \Krm_0(\Afr,A)}.
    \]
    (This essentially follows from the structure theorem of finitely generated modules over PID. In fact, given $[P,\theta,Q]$ we choose an $\Afr$-basis of $P$ and $Q$ so that $\theta$ can be represented by a diagonal matrix, and the relations \eqref{eq:Rel-K0-relations} yield $[P,\theta,Q]=[\Afr,a,\Afr]$ where $a$ is the determinant of the matrix representation of $\theta$, showing surjectivity. Injectivity can be seen by keeping track of the effect of relations on ``determinants''; \emph{cf.} \cite[Lemma~15.8]{Swan:K-thy}.) 
%
\end{exa}


Now, let $E'$ be another global or local field containing $E$, and choose a Dedekind subdomain $R'\subset E'$ containing $R$. 
We choose an $E$-algebra homomorphism
\[\psi\colon A \to A'\coloneqq\End_{E'}(V_\psi)\]
for some finite-dimensional $E'$-vector space $V_\psi$. Fix an $R'$-lattice $T_\psi$ of $V_\psi$ such that $\psi$ restricts to a map $\Afr\to\Afr'\coloneqq\End_{R'}(T_\psi)$, which we also denote by $\psi$. 

For a (left) $\Afr$-module $M$, we define the following $R'$-module
\begin{equation}\label{eq:psi-part-integral}
    [M]_\psi\coloneqq T_\psi^\ast \otimes_{\Afr} M,
\end{equation}
where we view the $R'$-linear dual $T_\psi^\ast$ as a $(R',\Afr)$-bimodule via $\psi$. Clearly, if $P$ is a projective $\Afr$-module then $[P]_\psi$ is also a projective $R'$-module, so we get a group homomorphism 
\begin{equation}\label{eq:rho-psi}
    \rho^\psi\colon\xymatrix@R=0pt@C=1em{\Krm_0(\Afr,A) \ar[r]&**[r] \Krm_0(R',E');&\\
    [P,\theta,Q] \ar@{|->}[r]&**[r] [P,\theta,Q]_\psi & = \big[[P]_\psi,[\theta]_\psi,[Q]_\psi\big].
    } 
\end{equation}
Alternatively, this map can be obtained the following composition
\[\begin{tikzcd}
    \Krm_0(\Afr,A) \arrow[r, "\psi_\ast"] & \Krm_0(\Afr',A') \arrow[r, "\cong"] & \Krm_0(R',E')
\end{tikzcd} ,\]
where $\psi_\ast$ is induced by the scalar extension $\Afr'\otimes_\Afr(-)$, and the second arrow by the Morita equivalence; \emph{cf.} \cite[\S3.5]{BurnsFlach:ETNC-1}.

Let us record the following basic properties of $\rho^\psi$.
\begin{lem}\label{lem:rho-psi}
Choose an $E$-algebra map $\psi\colon A\to A' = \End_{E'}(V_\psi)$ and an $R'$-lattice $T_\psi\subset V_\psi$ such that $\psi(\Afr) $ is contained in $\Afr'\coloneqq \End_{R'}(T_\psi)$, as above.
\begin{enumerate}
    \item\label{lem:rho-psi:indep} The map $\rho^\psi\colon \Krm_0(\Afr,A) \to \Krm_0 (R',E')$ depends only on $\psi\colon A\to A' = \End_{E'}(V_\psi)$, not on the choice of $R'$-lattice $T_\psi\subset V_\psi$.
    \item\label{lem:rho-psi:conn-hom} We have the following commutative diagram
\[
    \xymatrix@C=1em{
    \Krm_1(A) \ar[d]_-{\partial_{\Afr,E}}\ar[rr]^-{\Nrd^\psi}&& 
     E'^\times  & \ar[l]_-{\cong} \Krm_1(E')\ar[d]^-{\partial_{R',E'}}\\
    \Krm_0(\Afr,A) \ar[rrr]_-{\rho^\psi}&& & \Krm_0(R',E') 
    },
\]
where $\Nrd^\psi$ is the map $\Krm_1(A) \hookrightarrow \zeta(A)^\times\to\zeta(A')^\times = E'^\times$ induced by $\psi$.
\end{enumerate}
\end{lem}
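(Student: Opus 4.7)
The plan is to prove the two parts separately. Part (\ref{lem:rho-psi:indep}) is essentially formal and reduces to a change-of-lattice computation, while part (\ref{lem:rho-psi:conn-hom}) is a direct calculation based on the explicit formula \eqref{eq:conn-hom-delta} combined with a standard determinant identity in $\Krm_0(R', E')$.

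For part (\ref{lem:rho-psi:indep}), given two $\Afr$-stable $R'$-lattices $T_\psi, T'_\psi$ of $V_\psi$, I would first reduce to the case $T'_\psi \subseteq T_\psi$ by passing to their intersection $T_\psi \cap T'_\psi$ (which is again $\Afr$-stable and full in $V_\psi$). Dualising gives $T_\psi^* \subseteq (T'_\psi)^*$ inside $V_\psi^*$, and for any finitely generated projective $\Afr$-module $P$ the flatness of $P$ yields an injection $\iota_P\colon T_\psi^* \otimes_\Afr P \hookrightarrow (T'_\psi)^* \otimes_\Afr P$ of projective $R'$-modules that becomes an isomorphism after $\otimes_{R'} E'$. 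Setting $X_P \coloneqq [T_\psi^* \otimes_\Afr P,\, \iota_P,\, (T'_\psi)^* \otimes_\Afr P]$ and using the composition relations \eqref{eq:Rel-K0-relations} together with the functoriality of $\iota_{\bullet}$ in the input, one obtains
\[
\rho^{\psi, T'_\psi}\big([P,\theta,Q]\big) - \rho^{\psi, T_\psi}\big([P,\theta,Q]\big) \;=\; X_Q - X_P.
\]
Via the localisation sequence for $(R', E')$, the class $X_P$ is identified with the class of the finitely generated $R'$-torsion $\Afr$-module $N \otimes_\Afr P$ (where $N \coloneqq (T'_\psi)^*/T_\psi^*$) in the Grothendieck group of finitely generated $R'$-torsion modules. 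The $A$-linear isomorphism $\theta\colon E \otimes_R P \riso E \otimes_R Q$ combined with additivity over short exact sequences (and the fact that two $\Afr$-projective modules with isomorphic $A$-scalar extension are locally isomorphic at every maximal ideal of $R'$) then forces $[N \otimes_\Afr P] = [N \otimes_\Afr Q]$, so $X_P = X_Q$.

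For part (\ref{lem:rho-psi:conn-hom}), I would represent $\xi \in \Krm_1(A)$ by some $u \in \GL_n(A)$; the standard extension of \eqref{eq:conn-hom-delta} via the Morita equivalence of $A$ with $\Mat_n(A)$ then gives $\partial_{\Afr, E}(\xi) = [\Afr^n,\, r_u,\, \Afr^n]$, where $r_u$ denotes right multiplication by $u$. Applying $\rho^\psi$ (well-defined by part (\ref{lem:rho-psi:indep})) yields $[(T_\psi^*)^n,\, r_{\psi(u)},\, (T_\psi^*)^n]$. On the other side, by the definition of the reduced norm for the split semisimple algebra $\Mat_n(A') = \End_{E'}(V_\psi^{\oplus n})$, one has $\Nrd^\psi(\xi) = \det_{E'}(\psi(u)) \in E'^\times$, whose image under $\partial_{R', E'}$ is $[R',\, \det_{E'}(\psi(u)),\, R']$. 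Commutativity of the diagram then reduces to the general identity
\[
[T,\, \theta,\, T] \;=\; [R',\, \det_{E'}(\theta),\, R'] \quad \text{in } \Krm_0(R', E')
\]
for any full $R'$-lattice $T$ in a finite-dimensional $E'$-vector space and any $\theta \in \GL_{E'}(T \otimes_{R'} E')$. This is established by localising at each maximal ideal of $R'$, where $T$ becomes free and $\theta$ admits a Smith normal form $UDV$ with $U, V \in \GL(R')$ and $D$ diagonal; the relations \eqref{eq:Rel-K0-relations} then kill the triples attached to $U$ and $V$ and reduce the identity to the multiplicativity of $[R', \cdot, R']$ in the scalar case.

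The main obstacle is the factorisation step in part (\ref{lem:rho-psi:indep}): identifying $X_P$ as a torsion class via the localisation sequence and establishing its invariance under $\theta$ requires careful bookkeeping with the locally-free class group of $\Afr$, but is essentially standard. Part (\ref{lem:rho-psi:conn-hom}) is then a routine unwinding of definitions combined with the determinant identity above.
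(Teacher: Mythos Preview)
Your approach to part~(\ref{lem:rho-psi:indep}) differs from the paper's: you work with the $\Afr$-equivariant inclusion $T_\psi^\ast \hookrightarrow T_\psi'^\ast$ and reduce to comparing torsion classes $X_P$, whereas the paper picks $\alpha \in A'^\times$ with $T'_\psi = \alpha(T_\psi)$ and asserts that $\alpha^\ast \otimes \id_M$ is an isomorphism of triples. Your route is the more honest one --- the paper's $\alpha^\ast$ is right $\Afr$-linear only when $\alpha$ centralises $\psi(\Afr)$, so $\alpha^\ast \otimes \id_M$ need not even descend to the tensor product over~$\Afr$.

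That said, your key step has a real gap: the claim that projective $\Afr$-modules with isomorphic $A$-scalar extension are locally isomorphic (this should be at maximal ideals of $R$, not $R'$) is \emph{false} for general orders, and in fact the lemma itself fails in that generality. Take $R = R' = \ZZ_p$, $A = A' = \Mat_2(\QQ_p)$, $\psi = \id$, and the hereditary order $\Afr = \bigl\{\smallmtwo{a}{b}{pc}{d}:a,b,c,d\in\ZZ_p\bigr\}$. The column ideals $P_1 = \ZZ_p e_1 \oplus p\ZZ_p e_2$ and $P_2 = \ZZ_p^2$ are non-isomorphic indecomposable projectives with $\QQ_p\otimes P_1 \cong \QQ_p\otimes P_2$; for $T_\psi = P_2$ and $T'_\psi = P_1$ one computes that $N = T_\psi'^\ast/T_\psi^\ast\cong\FF_p$ with $\Afr$ acting through the $(2,2)$-entry, so $N \otimes_\Afr P_1 = 0$ while $N \otimes_\Afr P_2 \cong \FF_p$, and indeed $\rho^{\psi,T_\psi}([P_1,\id,P_2]) = 0$ whereas $\rho^{\psi,T'_\psi}([P_1,\id,P_2]) = [\ZZ_p,p,\ZZ_p]$. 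The statement \emph{is} valid in the paper's intended case $\Afr = RG$: there the injectivity of $\Krm_0(\widehat R_\mfr G) \hookrightarrow \Krm_0(\widehat E_\mfr G)$ (equivalently, nonsingularity of the Cartan matrix of $kG$) gives $\widehat P_\mfr \cong \widehat Q_\mfr$ at every $\mfr$, after which your argument goes through verbatim. Your treatment of part~(\ref{lem:rho-psi:conn-hom}) is correct and more explicit than the paper's one-line ``straightforward diagram chasing''.
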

\begin{proof}
    Let $T'_\psi$ be another $R'$-lattice of $V_\psi$ such that $\End_{R'}(T'_\psi)$ contains $\psi(\Afr)$, and choose $\alpha\in A'^\times$ such that $T'_\psi = \alpha(T_\psi)$. Then for any $\Afr$-module $M$ we have an isomorphism 
    \[
    \begin{tikzcd}[column sep=large]
        T_\psi'^\ast\otimes_{\Afr}M\arrow[r, "\alpha^\ast\otimes\id_M"] &
        T_\psi^\ast\otimes_{\Afr}M  
    \end{tikzcd},
    \]
    inducing an isomorphism of triples $T_\psi'^\ast\otimes_{\Afr}(P,\theta,Q)\riso T_\psi^\ast\otimes_{\Afr}(P,\theta,Q)$ for any triple $(P,\theta,Q)$ representing an element of $\Krm_0(\Afr,A)$.

%
    Claim~(\ref{lem:rho-psi:conn-hom}) also follows from the straightforward diagram chasing.
\end{proof}

%

    For $C^\bullet\in \Dperf(\Afr)$, a \emph{trivialisation} over $E$  (or an \emph{$E$-trivialisation})  means an $A$-linear isomorphism
\begin{equation}\label{eq:ht-pairing}
        h\colon \bigoplus_{i \text{ even}}\Hrm^i(E\otimes_RC^\bullet) \riso \bigoplus_{i \text{ odd}}\Hrm^i(E\otimes_RC^\bullet).
\end{equation}
By semi-simplicity of $A$, one can naturally extend $h$ to an isomorphism $\bigoplus_{i \text{ even}}E\otimes_RC^i \riso \bigoplus_{i \text{ odd}}E\otimes_RC^i$, also denoted by $h$.

\begin{defn}\label{def:Euler-Char-Rel-K0}
    Given a perfect $\Afr$-complex with $E$-trivialisation $(C^\bullet, h)$, we define its \emph{Euler characteristic} $\chi_{\Afr,E}(C^\bullet,h) \in \Krm_0(\Afr,A)$ as follows:
\[
\chi_{\Afr,E}(C^\bullet,h) \coloneqq \left[\big(\bigoplus_{i \text{ even}}C^i\big), h, \big(\bigoplus_{i \text{ odd}}C^i\big)\right].
\]
Immediately, $\chi_{\Afr,E}(C^\bullet,h)$ only depends on the isomorphism class of $C^\bullet$ in $\Dperf(\Afr)$.

If $C^\bullet$ is a perfect $\Afr$-complex such that $E\otimes_RC^\bullet$ is acyclic, then we let $\chi_{\Afr,E}(C^\bullet,0)$ denote the Euler characteristic with respect to the \emph{unique} $E$-trivialisation on $C^\bullet$ (i.e., the zero map between the zero modules).
\end{defn}
The formation of $\chi_{\Afr,E}(-)$ is functorial with respect to $\rho^\psi$ \eqref{eq:rho-psi}; indeed,  given a perfect $\Afr$-complex with $E$-trivialisation $(C^\bullet,h)$, we have
\begin{equation}
    \rho^\psi\big(\chi_{\Afr,E}(C^\bullet,h)\big) = \chi_{R',E'}([C^\bullet]_\psi,[h]_\psi), 
\end{equation}
where $[h]_\psi$ is the $E'$-trivialisation induced on $[C^\bullet]_\psi\coloneqq T_\psi^\ast\otimes_\Afr C^\bullet$ via $h$.

\begin{exa}\label{exa:Euler-char-tor}
     If $\Afr$ is a DVR with normalised valuation $v_A$, then let $v_A$ also denote the following isomorphism
    \[ v_A\colon\xymatrix@1{\Krm_0(\Afr,A) \ar[r]^-{\cong}   &  \ZZ} \quad \text{sending } [\Afr,a,\Afr]\mapsto v_A(a).\]
    In that case, for any $C^\bullet\in \Dperf(\Afr)$ with all $\Hrm^i(C^\bullet)$ torsion we have
    \[v_A\left(\chi_{\Afr,E}(C^\bullet,0)\right)  = \sum_{i}(-1)^{i+1}\lth_\Afr\big(\Hrm^i(C^\bullet)\big).\]
\end{exa}

Now, let $\pfr$ be a non-zero \emph{principal} prime ideal of $R$, and write $k\coloneqq R/\pfr$.
Given $C^\bullet_k\in\Dperf(\Afr/\pfr\Afr)$, we abusively write 
\[\chi_{\Afr,E}(C^\bullet_k,0)\coloneqq \chi_{\Afr,E}(C^\bullet_\Afr,0)\]
where  $C^\bullet_\Afr$ is a perfect $\Afr$-complex quasi-isomorphic to $C^\bullet_k$. (Note that $E\otimes_R C^\bullet_\Afr$ is clearly acyclic.) In this case, we have another notion of Euler characteristic; namely, $\chi_{\Afr/\pfr\Afr}(C^\bullet_k)\in\Krm_0(\Afr/\pfr\Afr)$, which can be related to $\chi_{\Afr,E}(C^\bullet_k,0)$ as follows. Define a homomorphism
\[\jmath_{\Afr/\pfr\Afr}\colon\xymatrix@1{\Krm_0(\Afr/\pfr\Afr)\ar[r]&\Krm_0(\Afr,A)}\]
by sending a finitely generated projective $\Afr/\pfr\Afr$-module $P_k$, viewed also as a complex concentrated in degree~$0$, to
\[\jmath_{\Afr/\pfr\Afr}(P_k)\coloneqq \chi_{\Afr,E}(P_k,0) = [\widetilde P,\varpi^{-1},\widetilde P], \]
where $\widetilde P$ is a projective $\Afr$-module lifting $P_k$ and $\varpi$ is a generator of $\pfr$; indeed, $P_k$ is quasi-isomorphic to a two-term complex $[\widetilde P\xrightarrow{\varpi}\widetilde P]$ concentrated in degree $[-1,0]$, so its Euler characteristic is as above. One can show (by straightforward computation) that this extends to any $C^\bullet_k\in\Dperf(\Afr/\pfr\Afr)$; that is,
\[\chi_{\Afr,E}(C^\bullet_k,0) = \jmath_{\Afr/\pfr\Afr}\big(\chi_{\Afr/\pfr\Afr}(C^\bullet_k)\big).\]
Furthermore, given $\psi\colon\Afr\to \End_{R'}(T_\psi)$ as before such that $R'$ is a DVR whose maximal ideal $\pfr'$ contains $\pfr$, we also have
\begin{equation}\label{eq:psi-part-torsion}
    \rho^\psi\big(\chi_{\Afr,E}(C^\bullet_k,0)\big) = \chi_{R',E'}(T^\ast_\psi\otimes^{\Lrm}_\Afr C_k^\bullet,0 ) 
    = \jmath_{R'/\pfr'}\big(\chi_{R'/\pfr'}(T_\psi^\ast/\pfr T_\psi^\ast \otimes_{\Afr/\pfr\Afr}C_k^\bullet) \big).
\end{equation}

\begin{lem}\label{lem:psi-part-torsion}
In the above setting, choose an algebraic closure $\bar k$ of $k$ and a $k$-embedding $R'/\pfr'\hookrightarrow \bar k$. Set $\overline T^\ast_{\psi}\coloneqq T_\psi^\ast\otimes_{R'}\bar k$ and define 
\[
\mrm_{\psi}\colon \xymatrix@1{\Krm_0(\Afr/\pfr\Afr) \ar[rr]^-{\overline T_{\psi}^\ast\otimes_{\Afr/\pfr\Afr}(-)}&& \Krm_0(\bar k) \ar[r]^-{\dim_{\bar k}}_-{\cong}& \ZZ}.
\]
Then for any $C^\bullet_k\in\Dperf(\Afr/\pfr\Afr)$ we have
\[
v_{E'}\left(\rho^\psi\big(\chi_{\Afr,E}(C^\bullet_k,0)\big)\right) = - e_{\pfr'|\pfr}\cdot\mrm_{\psi}\left(\chi_{\Afr/\pfr\Afr}(C^\bullet_k) \right),\]  
where $v_{E'}(-)$ is the normalised valuation on $E'\coloneqq\Frac R'$ and $e_{\pfr'|\pfr}$ is the ramification index.
\end{lem}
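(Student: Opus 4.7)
The plan is to rewrite both sides of the claimed identity as explicit lengths of torsion $R'$-modules, and to observe that the factor $e_{\pfr'|\pfr}$ arises naturally from the local-ring structure of $R'/\pfr R' = R'/\pfr'^{e_{\pfr'|\pfr}}$.

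First I would combine the already-displayed equation \eqref{eq:psi-part-torsion} with Example~\ref{exa:Euler-char-tor} applied to the DVR $R'$ (so that $v_{E'}$ is the isomorphism $\Krm_0(R',E')\cong\ZZ$ coming from Example~\ref{exa:rel-K0-PID}) to obtain
\[
v_{E'}\bigl(\rho^\psi(\chi_{\Afr,E}(C^\bullet_k,0))\bigr)=\sum_i(-1)^{i+1}\lth_{R'}\bigl(\Hrm^i(T_\psi^\ast\otimes^{\Lrm}_\Afr C^\bullet_k)\bigr).
\]
Since $\pfr$ lies in the image of the central map $R\to\Afr$ and annihilates $C^\bullet_k$, after representing $C^\bullet_k$ by a bounded complex of finitely generated projective $\Afr/\pfr\Afr$-modules the derived tensor product is computed by the ordinary tensor product $M^\bullet\coloneqq(T_\psi^\ast/\pfr T_\psi^\ast)\otimes_{\Afr/\pfr\Afr}C^\bullet_k$. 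The standard Euler-characteristic additivity of length on bounded complexes of finite-length $R'$-modules then turns the right-hand side of the display into $-\sum_i(-1)^i\lth_{R'}(M^i)$, and additivity of $\mrm_\psi$ on $\Krm_0(\Afr/\pfr\Afr)$ together with $\chi_{\Afr/\pfr\Afr}(C^\bullet_k)=\sum_i(-1)^i[C^i_k]$ reduces the whole problem to the pointwise identity
\[
\lth_{R'}\bigl((T_\psi^\ast/\pfr T_\psi^\ast)\otimes_{\Afr/\pfr\Afr}P\bigr)=e_{\pfr'|\pfr}\cdot\mrm_\psi([P])
\]
for every finitely generated projective $\Afr/\pfr\Afr$-module $P$.

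This pointwise identity is the only real content of the proof, and the factor $e_{\pfr'|\pfr}$ enters precisely here. Since $T_\psi^\ast$ is a free $R'$-module (of rank $d\coloneqq\dim_{E'}V_\psi$), its reduction $T_\psi^\ast/\pfr T_\psi^\ast$ is free of rank $d$ over the local artinian ring $R'/\pfr R'=R'/\pfr'^{e_{\pfr'|\pfr}}$, so tensoring a projective $\Afr/\pfr\Afr$-module $P$ along the bimodule $T_\psi^\ast/\pfr T_\psi^\ast$ produces a projective---and hence, since $R'/\pfr'^{e_{\pfr'|\pfr}}$ is local, free---module over this ring. Its $R'$-length is therefore exactly $e_{\pfr'|\pfr}$ times its rank. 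To identify that rank with $\mrm_\psi([P])$, I would further tensor over $R'/\pfr'$ with $\bar k$: the resulting $\bar k$-vector space is $\overline T_\psi^\ast\otimes_{\Afr/\pfr\Afr}P$, whose dimension is $\mrm_\psi([P])$ by definition and coincides with the rank since $R'/\pfr'\hookrightarrow\bar k$ is a field extension. The only subtlety is thus to recognise that $T_\psi^\ast/\pfr T_\psi^\ast$ is free of full rank over the artinian local ring $R'/\pfr'^{e_{\pfr'|\pfr}}$ rather than merely over the residue field, which is exactly where the ramification index manifests.
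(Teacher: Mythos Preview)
Your proof is correct and follows essentially the same route as the paper: combine \eqref{eq:psi-part-torsion} with Example~\ref{exa:Euler-char-tor}, pass from cohomology to terms by additivity of length, and then identify $\lth_{R'}\bigl((T_\psi^\ast/\pfr T_\psi^\ast)\otimes_{\Afr/\pfr\Afr}C^i\bigr)=e_{\pfr'|\pfr}\cdot\dim_{\bar k}\bigl(\overline T_\psi^\ast\otimes_{\Afr/\pfr\Afr}C^i\bigr)$. The paper compresses this last step into a single line, whereas you spell out explicitly that the factor $e_{\pfr'|\pfr}$ arises because $T_\psi^\ast/\pfr T_\psi^\ast$ is free over the artinian local ring $R'/\pfr'^{e_{\pfr'|\pfr}}$; this is exactly the content the paper is using implicitly.
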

\begin{proof}
    By Example~\ref{exa:Euler-char-tor} and \eqref{eq:psi-part-torsion}, we have
    \begin{align*}
        v_{E'}\left(\rho^\psi\big(\chi_{\Afr,E}(C^\bullet_k,0)\big)\right) 
        & = \sum_i(-1)^{i+1}\lth_{R'}\left(\Hrm^i\big(T_{\psi}^\ast/\pfr T_\psi^\ast\otimes_{\Afr/\pfr\Afr}C^\bullet_k\big)\right) \\
        & = \sum_i(-1)^{i+1}\lth_{R'}\left(T_{\psi}^\ast/\pfr T_\psi^\ast\otimes_{\Afr/\pfr\Afr}C^i\right) \\
        &=  e_{\pfr'|\pfr}\cdot\sum_i(-1)^{i+1}\dim_{\bar k}\left(\overline T_{\psi}^\ast\otimes_{\Afr/\pfr\Afr}C^i\right)= 
        -e_{\pfr'|\pfr}\cdot\mrm_{\psi}\left(\chi_{\Afr/\pfr\Afr}(C^\bullet_k) \right),
    \end{align*}
    as desired.
\end{proof}

\begin{rmk}
    In  Lemma~\ref{lem:psi-part-torsion}, the normalised valuation of $\rho^\psi\big(\chi_{\Afr,E}(C^\bullet_k,0)\big)$ changes if we replace $T_\psi$ by the scalar extension under a finite ramified ring extension, but the formation of the fractional ideal $\afr$ whose normalised valuation equals that of $\rho^\psi\big(\chi_{\Afr,E}(C^\bullet_k,0)\big)$ commutes with any finite scalar ring extensions.
\end{rmk}

Now consider a $G$-cover $\pi\colon X_L\to X$ and a $G$-equivariant vector bundle $\Ecal$ in $X_L$ that satisfies the conditions in Theorem~\ref{th:EquivRR}. We also assume that the constant field $k$ of $X$ is \emph{finite} with characteristic $p$. For simplicity we write
\begin{equation}\label{eq:Euler-Char-Rel-K0-VB}
\chi^G_p(\Ecal)\coloneqq \chi_{\ZZ_p G,\QQ_p}(\RGamma(X_L,\Ecal)^\vee,0) \in\Krm_0(\ZZ_p G,\QQ_p G),    
\end{equation}
which makes sense as $\RGamma(X_L,\Ecal)^\vee \in \Dperf(\Fp G)$. Here, $\RGamma(X_L,\Ecal)^\vee$ denotes the Pontryagin dual (or equivalently, the $\FF_p$-linear dual) with the contragredient $G$-action. To motivate this choice, see Theorem~\ref{th:BKK}.
For $\ell\ne p$, we set $\chi^G_{\ell}(\Ecal)=0$ in $\Krm_0(\ZZ_\ell G,\QQ_\ell G)$.

Let $E'$ be a finite extension of $\QQp$  with valuation ring $R'=\Ocal_{E'}$ and residue field $k'$. Pick a $G$-stable $\Ocal_{E'}$-lattice $T_\psi$ in a finitely generated $E'G$-module $V_\psi$, so we get a map $\psi\colon \ZZ_p G \to \End_{R'}(T_\psi)$. We now apply Lemma~\ref{lem:psi-part-torsion} to $\Afr=\ZZp G$ and $\pfr = (p)$ to obtain the following corollary.
\begin{cor}\label{cor:psi-part}
    Let $\pi\colon X_L\to X$ be a $G$-cover that is weakly ramified everywhere. For each $w\in Z^{\ram}_L$, choose $C_w\subseteq I_w$ so that we have $I_w = P_w\rtimes C_w$ (\emph{cf.} Lemma~\ref{lem:Cw}). We write $\theta_{C_w}\coloneqq \theta_w|_{C_w}$. 
    
    Let $k$ be the finite constant field of $X$, and fix a $k$-embedding $k_w\hookrightarrow \bar k$ for each $w\in Z_L^{\ram}$. Finally, fix an embedding $k'\hookrightarrow\bar k$ and set $\overline T_{\psi}\coloneqq \bar k \otimes_{\Ocal_{E'}}T_{\psi}$.
    \begin{enumerate}
        \item\label{cor:psi-part:local} For any $w\in Z_L^{\ram}$ and $j\in \ZZ/(|C_w|)$, we have 
        \[\mrm_{\psi,w}(j)\coloneqq \mrm_{\psi}\big(\Ind_{I_w}^G M_w(j) \big) = \sum_{a=0}^{[k_w:\FF_p]-1} \dim_{\bar k}\big(\overline T_{\psi}[\theta_{C_w}^{jp^a}]\big),\]
        Here, $\overline T_{\psi}[\theta_{C_w}^s]$ for $s\in\ZZ$ is the maximal subspace of $\overline T_{\psi}$ where $C_w$ acts via $\theta_{C_w}^s$.
        \item\label{cor:psi-part:formula} 
    Let $\Ecal$ be a $G$-equivariant vector bundle $X_L$ that satisfies the condition \eqref{eq:Ew} for any $w\in Z_L^{\wild}$. 
    Then we have
    \[  - v_{E'}\left(\rho^\psi\big(\chi^G_p(\Ecal)\big)\right) = v_{E'}(p)\cdot \bigg[ [k:\FF_p]\cdot\dim_{\bar k}\Big(\overline T^\ast_{\psi}\otimes_k \chi_k\big(\Ecal^G\big) \Big) + \ra_\Ecal(\psi) \bigg] \]
    where \[\ra_\Ecal(\psi)\coloneqq \frac{1}{|G|}\sum_{w\in Z_L^{\ram}}\sum_{i=1}^{\rk\Ecal}
    \left(-|P_w|\cdot\sum_{j=1}^{|C_w|-1}j\cdot\mrm_{\psi,w}(-j) + |I_w|\cdot\sum_{j=1}^{l_{w,i}}\mrm_{\psi,w}(j)\right).\]
    Here, $\mrm_{\psi,w}(j)$ is defined in (\ref{cor:psi-part:local}), and $l_{w,i}$'s are as in \eqref{eq:lw}.
    \end{enumerate}
\end{cor}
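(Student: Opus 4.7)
The proof divides into the local character computation (\ref{cor:psi-part:local}) and the global assembly (\ref{cor:psi-part:formula}).

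For (\ref{cor:psi-part:local}), I apply transitivity of induction to rewrite $\Ind_{I_w}^G M_w(j) \cong \Ind_{C_w}^G\chi_j$, where $\chi_j$ denotes $\mfr_w^j/\mfr_w^{j+1}$ viewed as an $\FF_pC_w$-module by restriction of scalars from $k_w$. Two applications of Frobenius reciprocity for tensor products then give
\[
\overline T_\psi^*\otimes_{\FF_pG}\Ind_{I_w}^G M_w(j) \cong \overline T_\psi^*\otimes_{\FF_pC_w}\chi_j.
\]
After extending scalars to $\bar k$, the Frobenius-orbit decomposition $k_w\otimes_{\FF_p}\bar k \cong \bigoplus_{a=0}^{[k_w:\FF_p]-1}\bar k$ via the embeddings $\sigma_a = \Fr^a$ identifies $\chi_j\otimes_{\FF_p}\bar k$ with $\bigoplus_a\bar k(\theta_{C_w}^{jp^a})$, since $\sigma_a(\theta_{C_w}^j(c)) = \theta_{C_w}^{jp^a}(c)$. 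Because $|C_w|$ is coprime to $p$, the ring $\bar kC_w$ is semisimple, so decomposing $\overline T_\psi$ into $C_w$-isotypic pieces gives $\overline T_\psi^*\otimes_{\bar kC_w}\bar k(\chi) \cong \overline T_\psi[\chi]^*$, whose $\bar k$-dimension equals $\dim_{\bar k}\overline T_\psi[\chi]$. Summing over $a$ yields the formula.

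For (\ref{cor:psi-part:formula}), the starting point is Lemma~\ref{lem:psi-part-torsion} with $\Afr=\ZZ_pG$ and $\pfr = (p)$, which (since $\chi_p^G(\Ecal)$ is built from $\RGamma(X_L,\Ecal)^\vee$) yields
\[
-v_{E'}\big(\rho^\psi(\chi_p^G(\Ecal))\big) = v_{E'}(p)\cdot\mrm_{\psi}\big(\chi_{\FF_pG}(\RGamma(X_L,\Ecal)^\vee)\big).
\]
I then expand $\chi_{kG}(\RGamma(X_L,\Ecal))$ using Theorem~\ref{th:EquivRR}(\ref{th:EquivRR:formula}), restrict scalars from $kG$ to $\FF_pG$ (which sends $[kG]$ to $[k:\FF_p]\cdot[\FF_pG]$ while preserving each induced class $[\Ind_{I_w}^GM_w(j)]$ as an $\FF_pG$-class, since induction commutes with restriction of scalars), and take Pontryagin duals. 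The duality involution on $K_0(\FF_pG)$ fixes $[\FF_pG]$ and sends $[\Ind_{I_w}^GM_w(j)]\mapsto[\Ind_{I_w}^GM_w(-j)]$, since $M_w(j)^\vee\cong M_w(-j)$ (the dual of the character $\theta_{C_w}^j$ being $\theta_{C_w}^{-j}$). Applying $\mrm_\psi$ term by term and invoking (\ref{cor:psi-part:local}): the dualized contribution from $-\rk\Ecal\cdot[N(\pi)]^\vee$ becomes $-\tfrac{\rk\Ecal}{|G|}\sum_w|P_w|\sum_{j=1}^{|C_w|-1}j\cdot\mrm_{\psi,w}(-j)$, the contribution from $[W_G(\Ecal)]^\vee$ becomes $\sum_w\tfrac{|I_w|}{|G|}\sum_i\sum_{j=1}^{l_{w,i}}\mrm_{\psi,w}(j)$, and these reassemble exactly as $\ra_\Ecal(\psi)$. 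The remaining term $\Ind_1^G\chi_k(\Ecal^G)$, which is self-dual, contributes $[k:\FF_p]\cdot\chi_k(\Ecal^G)\cdot\deg\psi = [k:\FF_p]\cdot\dim_{\bar k}\big(\overline T_\psi^*\otimes_k\chi_k(\Ecal^G)\big)$ via $\mrm_\psi([\FF_pG])=\deg\psi$. Combining the three contributions yields the asserted identity.

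The main technical burden is the combinatorial bookkeeping. Three layered operations — the Frobenius twists from the Galois decomposition of $k_w\otimes_{\FF_p}\bar k$ in (\ref{cor:psi-part:local}), Pontryagin duality flipping $M_w(j)\leftrightarrow M_w(-j)$, and restriction of scalars from $kG$ to $\FF_pG$ producing the $[k:\FF_p]$ prefactor on the trivial summand — must be combined without sign or indexing errors to extract precisely $\ra_\Ecal(\psi)$; each operation is routine in isolation, but their interplay in the formula for $\ra_\Ecal(\psi)$ is where all the care lies.
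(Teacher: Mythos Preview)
Your proposal is correct and follows essentially the same approach as the paper's proof: part~(\ref{cor:psi-part:local}) via transitivity of induction, the Frobenius-orbit decomposition of $k_w\otimes_{\FF_p}\bar k$, and semisimplicity of $\bar kC_w$; part~(\ref{cor:psi-part:formula}) via Lemma~\ref{lem:psi-part-torsion}, Theorem~\ref{th:EquivRR}(\ref{th:EquivRR:formula}), the duality $M_w(j)^\vee\cong M_w(-j)$, and the computation of $\mrm_\psi$ on $\Ind_1^G$. Your tensor-associativity argument $\overline T_\psi^\ast\otimes_{\FF_pG}\Ind_{C_w}^G\chi_j\cong \overline T_\psi^\ast\otimes_{\FF_pC_w}\chi_j$ is a slightly more direct route than the paper's passage through $G$-invariants (which requires observing that $\overline T_\psi^\ast\otimes_{\FF_p}\Ind_{C_w}^G(\theta_{C_w}^j)$ is projective), but the substance is the same.
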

\begin{proof}
    By transitivity of inductions, we have $\Ind_{I_w}^GM_w(j) \cong \Ind_{C_w}^G\big((\mfr_w^{j}/\mfr^{j+1})|_{C_w}\big)$; \emph{cf.} Lemma~\ref{lem:proj-cover}. Since 
    $\overline T^\ast_{\psi}\otimes_{\FF_p} \Ind_{C_w}^G(\theta_{C_w}^j)$ is a projective $\bar kG$-module,\footnote{One can show projectivity by realising $\overline T^\ast_{\psi}\otimes_{\FF_p} \Ind_{C_w}^G(\theta_{C_w}^j)$ as a direct summand of $\Ind_1^G (k_w\otimes_{\FF_p} \overline T^\ast_{\psi})$.} we have
    \begin{multline}\label{eq:Frob-Rec-psi}
        \overline T^\ast_{\psi}\otimes_{\FF_p G} \Ind_{C_w}^G\big((\mfr_w^{j}/\mfr^{j+1})|_{C_w}\big) \cong \Big(\overline T^\ast_{\psi}\otimes_{\FF_p} \Ind_{C_w}^G\big((\mfr_w^{j}/\mfr^{j+1})|_{C_w}\big)\Big)^G \\
        \cong \Hom_{\bar kG}\Big(\overline T_{\psi},\,\bar k\otimes_{\FF_p}\Ind_{C_w}^G\big((\mfr_w^{j}/\mfr^{j+1})|_{C_w}\big) \Big) \cong \Hom_{\bar k[C_w]}\big(\overline T_{\psi},\,\bar k\otimes_{\FF_p}(\mfr_w^{j}/\mfr^{j+1})\big), 
    \end{multline}
    where the last isomorphism is the Frobenius reciprocity. 
    To conclude, observe that
    \[\bar k\otimes_{\FF_p}(\mfr_w^{j}/\mfr_w^{j+1}) \cong \bigoplus_{i=0}^{[k_w:\FF_p]-1}\bar k \otimes_{\Fr_p^i,k_w}(\mfr_w^{j}/\mfr_w^{j+1}),\] 
     where $\Fr_p$ is the $p$th power map and $C_w$ acts on $\bar k \otimes_{\Fr_p^i,k_w}(\mfr_w^{j}/\mfr_w^{j+1})$ via $\theta_w^{jp^i}$. In particular, the last term in \eqref{eq:Frob-Rec-psi} is the direct sum of the multiplicity spaces for $\theta_w^{jp^i}$, and thus the displayed equation in claim~(\ref{cor:psi-part:local}) follows. 
    
    For any $k$-vector space $V$, we have 
    \[\overline T^\ast_{\psi}\otimes_{\FF_p G}\Ind_1^GV \cong \overline T^\ast_{\psi}\otimes_{\FF_p}V \cong (\overline T^\ast_{\psi}\otimes_{k}V)^{\oplus[k:\FF_p]}.\] 
    Note also that $M_w(j)^\vee \cong M_w(-j)$, where $M_w(j)^\vee$ is the Pontryagin dual with contragredient $G_w$-action.
    Claim~(\ref{cor:psi-part:formula}) now follows from Theorem~\ref{th:EquivRR}(\ref{th:EquivRR:formula}) and Lemma~\ref{lem:psi-part-torsion}.
\end{proof}

We conclude the section with some remarks on Corollary~\ref{cor:psi-part}.
\begin{rmk}
    In Corollary~\ref{cor:psi-part}(\ref{cor:psi-part:local}), we have $\overline T_{\psi}[\theta_{C_w}^s]\cong \overline T^{\ssim}_{\psi}[\theta_{C_w}^s]$ as $k_w[C_w]$ is semi-simple, which shows that $\mrm_{\psi,w}(j)$ depends only on $j\in \ZZ/(|C_w|)$ and $V_\psi$, not on the choice of $T_\psi\subset V_\psi$. In particular, the formula in Corollary~\ref{cor:psi-part}(\ref{cor:psi-part:formula}) is independent of the choice of $T_\psi\subset V_\psi$. 
\end{rmk}

\begin{rmk}\label{rmk:psi-part-intended-case}
    The right hand side of the formula in Corollary~\ref{cor:psi-part}(\ref{cor:psi-part:formula}) can be divided into two parts -- the first terms involves $\deg\psi\coloneqq\dim_{E'}V_\psi$ and the Euler characteristic of $\Ecal^G$, and the second term $\ra_\Ecal(\psi)$ measures the ``local ramification'' of $\overline T_{\psi}^{\ssim}$ and $\Ecal$ (that is, $\overline T_{\psi}^{\ssim}|_{I_w}$ and $l_{w,i}$'s for any $w\in Z^{\ram}_L$).
    
    Let us now consider the special case of $\Ecal = \pi^\ast\Fcal(-Z_L)$ where $\Fcal$ is a vector bundle on $X$ and $Z_L = \pi^{-1}(Z)$ for some closed subset $Z\subset |X|$ containing the ramification locus for $\pi$.  
    Then as in \eqref{eq:EquivRR-intended-case}, the formula in Corollary~\ref{cor:psi-part}(\ref{cor:psi-part:formula}) can be made more explicit using the following formula:
    \begin{align*}
        \dim_{\bar k}\Big(\overline T^\ast_{\psi}\otimes_k \chi_k\big(\Ecal^G\big) \Big) & =(\deg\psi)\cdot\Big( (\rk\Fcal)\cdot\big(1-\gen_K-\deg(Z)\big) + \deg(\Fcal) \Big);\\
        \ra_\Ecal(\psi) &= \frac{\rk\Ecal}{|G|}\sum_{w\in Z_L^{\ram}}\sum_{j=1}^{|I_w/P_w|-1}\sum_{a=0}^{[k_w:\FF_p]-1}j|P_w|\cdot\dim_{\bar k}\big(\overline T_{\psi}[\theta_{C_w}^{jp^a}]\big),
    \end{align*}
    where $\deg(Z) = \sum_{v\in Z}[k_v:k]$.
\end{rmk}

\begin{rmk}\label{rmk:ra-psi}
\setcounter{equation}{\value{equation}-1}
\begin{subequations}
    Let us make $\ra_\Ecal(\psi)$ more explicit in some special cases. Firstly, if we have $I_w = P_w$ for all $w\in Z^{\ram}_L$ (e.g., if $G$ is a $p$-group or $\pi$ is \'etale), then we have $\ra_\Ecal(\psi) = 0$ for any $\psi$ and $\Ecal$ (\emph{cf.} Remark~\ref{rmk:EquivRR-p-group}).

    Now, suppose that $\pi$ is tame everywhere and $\deg\psi=1$. We also specialise to the case when $\Ecal = \pi^\ast\Fcal(-Z_L)$ as in Remark~\ref{rmk:psi-part-intended-case}. For any $w\in Z^{\ram}_L$, let $d_w$ denote the smallest positive integer such that $p^{d_w}\equiv 1\bmod{|I_w|}$.  (Note that $d_w$ divides $[k_w:\FF_p]$.) For each $a=0,\cdots,d_w-1$, let $j^{(a)}_{\psi,w}$ denote the integer in $\{0,\cdots,|I_w|-1\}$ such that $I_w$ acts on $\overline T_{\psi}|_{I_w}$ via $\theta_w^{j^{(a)}_{\psi,w}\cdot p^a}$.  Then we have
    \begin{equation}\label{eq:ra-psi:gen}
        \ra_\Ecal(\psi) = \frac{\rk\Ecal}{|G|}\sum_{w\in Z_L^{\ram}}\sum_{a=0}^{d_w-1}j_{\psi,w}^{(a)}\cdot\frac{[k_w:\FF_p]}{d_w} .
    \end{equation}
    If furthermore $|I_w|$ divides $p-1$ for any $w\in Z^{\ram}_L$ (which follows if $|G|$ divides $p-1$), then setting $j_{\psi,w} = j_{\psi,w}^{(0)}$ we obtain
    \begin{equation}\label{eq:ra-psi:simple}
        \ra_\Ecal(\psi) = \frac{\rk\Ecal}{|G|}\sum_{w\in Z_L^{\ram}}j_{\psi,w}\cdot [k_w:\FF_p].
    \end{equation}
\end{subequations}
\end{rmk}

\section{Review of N\'eron models and base change}\label{sec:Neron}

We use the setting of \S\ref{sec:EquivRR}. Fix an abelian variety $A$ over $K$, and let $\Acal$ denote the N\'eron model of $A$ over $X$. For the pull back $A_L$ of $A$ over $L$, we let $\Acal_L$ denote the N\'eron model, and write $\Acal_{X_L}\coloneqq \Acal\times_X X_L$. The connected N\'eron models are denoted as $\Acal^\circ$, $\Acal^\circ_L$, etc. Let $A^t$ denote the dual abelian variety of $A$, and similarly define $\Acal^t$, $\Acal^{t,\circ}$, etc. We maintain this setting for the rest of the paper.

By the N\'eron mapping property, the $G$-action on $X_L$ lifts to a  $G$-action on $\Acal_L$, and we get a natural $G$-equivariant homomorphism $\Acal_{X_L}\to \Acal_L$ extending the identity map on the generic fibre. Furthermore, we have the following proposition.
\begin{prop}\label{prop:descent-conn-Neron-models}
Let $U'\subset X$ be the \emph{maximal} open subscheme such that the natural map $\Acal^\circ_{X_L}|_{U'_L}\to \Acal^\circ_L|_{U'_L}$ is an isomorphism, where $U'_L\coloneqq \pi^{-1}(U')$. 
\begin{enumerate}
     \item\label{prop:descent-conn-Neron-models:coker} The cokernel of the natural inclusion
    \[\begin{tikzcd}
        \Lie(\Acal_{X_L})\cong \pi^\ast\Lie (\Acal) \arrow[hook]{r} & \Lie(\Acal_L)
    \end{tikzcd}\]
    is supported exactly on $X_L\setminus U_L'$.
     \item\label{prop:descent-conn-Neron-models:sst}  A closed point $v\in |X|$ lies in $U'$ if either $L/K$ is unramified at $v$ or $A$ has semistable reduction at $v$. 
    \item\label{prop:descent-conn-Neron-models:tame} Suppose furthermore that $L/K$ is \emph{tamely ramified} at all places in $X'\setminus U'$. Then the natural map $\Lie(\Acal)\to\Lie(\Acal_L)^G$ is an isomorphism.
\end{enumerate}
\end{prop}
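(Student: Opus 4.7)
My plan is to treat the three claims in order, with (3) by far the most delicate.

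For (1), recall that the Lie algebra of a smooth group scheme depends only on the first infinitesimal neighbourhood of the identity section, so $\Lie(\Acal_{X_L})\cong\pi^*\Lie(\Acal)$ and $\Lie(\Acal_L)=\Lie(\Acal^\circ_L)$. By the N\'eron mapping property applied to the smooth $X_L$-scheme $\Acal_{X_L}$, there is a $G$-equivariant homomorphism $\Acal_{X_L}\to\Acal_L$ extending the identity on generic fibres, and it restricts to a homomorphism of smooth $X_L$-group schemes $\Acal^\circ_{X_L}\to\Acal^\circ_L$ of the same relative dimension that is an isomorphism on generic fibres. The induced map on Lie is thus an injection of locally free $\Ocal_{X_L}$-modules of the same rank; its cokernel vanishes at a closed point $x$ exactly when the original homomorphism is an isomorphism in a Zariski neighbourhood of $x$, which is the defining condition for $x\in U'_L$.

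For (2), if $v$ is unramified in $L/K$ then $\pi$ is \'etale near $v$ and N\'eron models commute with \'etale base change, so $\Acal^\circ_{X_L}=\Acal^\circ_L$ over $\pi^{-1}(v)$. If $A$ has semistable reduction at $v$ then $\Acal^\circ$ is semiabelian in a Zariski neighbourhood of $v$, and since semiabelian schemes are stable under arbitrary base change, $\Acal^\circ_{X_L}$ is a semiabelian model of $A_L$ near $\pi^{-1}(v)$; uniqueness of the semiabelian identity component of a N\'eron model then forces $\Acal^\circ_{X_L}=\Acal^\circ_L$ there.

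For (3), I argue locally at each $v\in|X|\setminus U'$, which is tamely ramified in $L/K$ by hypothesis. Pick $w\in\pi^{-1}(v)$ and set $R=\widehat{\Ocal}_{X,v}$, $S=\widehat{\Ocal}_{X_L,w}$, $\Gamma=G_w$, and let $N$ and $M$ denote the completed stalks of $\Lie(\Acal)$ at $v$ and of $\Lie(\Acal_L)$ at $w$. Using the identification $\pi^{-1}(v)\simeq G/\Gamma$ as $G$-sets, the completion at $v$ of the natural map $\Lie(\Acal)\to(\pi_*\Lie(\Acal_L))^G$ becomes the inclusion $N\hookrightarrow M^\Gamma$ of semi-linear $\Gamma$-invariants, so the goal is to prove $N=M^\Gamma$. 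My plan is to apply Proposition~\ref{prop:Noether} to $M$ — valid since $S/R$ is tame Galois — which shows that $M$ is a free $R[\Gamma]$-module and hence $M^\Gamma$ is a free $R$-module of rank $d=\dim A$, matching that of $N$; then to combine this with the classical result (due to Edixhoven et al.) that the identity component of the N\'eron model is recovered from its tame base change via a Galois-fixed-point construction on identity components, whose Lie-algebra shadow is exactly the identification $N=M^\Gamma$ inside $M$.

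The main obstacle is Part~(3), specifically the inclusion $M^\Gamma\subseteq N$ (the reverse inclusion is automatic from the construction of the natural map). A mere rank count over $R$ is insufficient: both $N$ and $M^\Gamma$ are rank-$d$ free $R$-submodules of $M$, but they could a priori differ as submodules. Tameness is essential here because it ensures, via the local normal basis theorem of \S\ref{sec:local}, that the semi-linear $\Gamma$-invariants of $M$ are computed by classical Galois descent, and that the dilatation producing $\Acal^\circ_L$ from $\Acal^\circ_{X_L}$ respects this descent — a phenomenon that fails in general under wild ramification, which is why the tameness hypothesis in (3) cannot be dropped without further input.
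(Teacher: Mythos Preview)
Your treatment of (2) matches the paper's. For (3), the paper does exactly what you gesture at: it invokes Edixhoven's result \cite[Theorem~4.2]{Edixhoven:Tame}, which gives a natural isomorphism of group schemes $\Acal_{\Ocal_v}\cong\big(\Res_{\Ocal_w/\Ocal_v}\Acal_{L,\Ocal_w}\big)^{G_w}$ at each tame place, and then simply takes Lie algebras, using that $\Lie\big(\Res_{\Ocal_w/\Ocal_v}\Acal_{L,\Ocal_w}\big)=\Lie(\Acal_L)(\Ocal_w)$ as an $\Ocal_v$-module. Once you cite Edixhoven, the identification $N=M^\Gamma$ is immediate --- it is not a rank match but a genuine equality of submodules --- so your detour through Proposition~\ref{prop:Noether} buys nothing, and your closing remarks about ``the dilatation\ldots respects this descent'' are too vague to replace the citation. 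Since you do name Edixhoven's theorem as the key input, the argument is essentially there, but you should lead with it and drop the rest.

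There is a genuine gap in (1). You assert that the cokernel of the Lie map vanishes at a closed point $x$ ``exactly when the original homomorphism is an isomorphism in a Zariski neighbourhood of $x$'', but only one direction of this biconditional is obvious. Suppose $\Lie(\Acal_{X_L})(\Ocal_w)\to\Lie(\Acal_L)(\Ocal_w)$ is an isomorphism; you must still deduce that $\Acal^\circ_{\Ocal_w}\to\Acal^\circ_{L,\Ocal_w}$ is an isomorphism of group schemes. The paper supplies the missing chain: since source and target are smooth and the differential along the identity section is an isomorphism, the map is \'etale (\cite[\S2.2, Corollary~10]{BoschLuetkebohmertRaynaud:NeronModels}); an \'etale birational morphism between smooth schemes is an open immersion by Zariski's main theorem; and an open immersion of group schemes into a target with connected fibres which is an isomorphism on generic fibres must be an isomorphism. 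Without this argument, your (1) is incomplete --- an injection of locally free sheaves of the same rank being an isomorphism at a stalk does not by itself force the underlying morphism of schemes to be an isomorphism there.
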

\begin{proof}
    Fix a place $v\in|X|$, and choose a place $w\in |X_L|$ over $v$. We set 
    \[\Acal_{\Ocal_v}\coloneqq \Acal\times_{X}\Spec\Ocal_v,\quad \Acal_{L,\Ocal_w}\coloneqq \Acal_L\times_{X_L}\Spec\Ocal_w, \quad\text{and}\quad \Acal_{\Ocal_w}\coloneqq\Acal\times_X\Spec\Ocal_w.\]
    We similarly define $\Acal_{\Ocal_v}^\circ$, etc. By standard properties of N\'eron models,  we have $v\in U'$ if and only if the natural map $\Acal^\circ_{\Ocal_w}\to\Acal^\circ_{L,\Ocal_w}$ is an isomorphism, and $U'$ contains the unramified locus for $L/K$; \emph{cf.} \S1.2, Proposition~4 and \S7.2, Corollary~2 in \cite{BoschLuetkebohmertRaynaud:NeronModels}. If $\Acal^\circ_{\Ocal_v}$ is a semi-abelian scheme, then $v\in U'$ by \cite[\S7.4, Corollary~4]{BoschLuetkebohmertRaynaud:NeronModels}). This proves (\ref{prop:descent-conn-Neron-models:sst}). 
    
    If $L_w/K_v$ is tamely ramified, then by \cite[Theorem~4.2]{Edixhoven:Tame} we have a natural isomorphism 
    \[\xymatrix@1{\Acal_{\Ocal_v} \ar[r]^-{\cong}& \left(\Res_{\Ocal_w/\Ocal_v}\Acal_{L,\Ocal_w}\right)^{G_w}}\]
    of group schemes over $\Ocal_v$, where $\Res_{\Ocal_w/\Ocal_v}\Acal_{L,\Ocal_w}$ denotes the Weil restriction of scalars. Since the Lie algebra of $\Res_{\Ocal_w/\Ocal_v}\Acal_{L,\Ocal_w}$ coincides with $\Lie(\Acal_L)(\Ocal_w)$ viewed as an $\Ocal_v$-module, it follows that the natural map $\Lie(\Acal)\to\Lie(\Acal_L)^G$ induces an isomorphism on the completed stalks at all \emph{tame} places $v\in|X|$. Since this map induces an isomorphism on the restriction to $U'$ by (\ref{prop:descent-conn-Neron-models:sst}), we obtain  (\ref{prop:descent-conn-Neron-models:tame}) by the standard descent argument.

    To prove (\ref{prop:descent-conn-Neron-models:coker}), we need to show that the natural map $\Acal^\circ_{\Ocal_w}\to \Acal^\circ_{L,\Ocal_w}$ is isomorphic if and only if $\Lie(\Acal_{X_L})(\Ocal_w)\hookrightarrow \Lie(\Acal_L)(\Ocal_w)$ is isomorphic. The ``only if'' direction is clear, so suppose that we have $\Lie(\Acal_{X_L})(\Ocal_w)\riso \Lie(\Acal_L)(\Ocal_w)$. Then the natural map $\Acal^\circ_{\Ocal_w}\to \Acal^\circ_{L,\Ocal_w}$ is \'etale by smoothness of the source and the target (\emph{cf.} \cite[\S2.2, Corollary~10]{BoschLuetkebohmertRaynaud:NeronModels}), so it is an open immersion by Zariski's main theorem (\emph{cf.} \cite[\S2.3, Theorem~2{$'$}]{BoschLuetkebohmertRaynaud:NeronModels}). Since all the fibres of $\Acal^\circ_{L,w}$ over $\Spec\Ocal_w$ is connected, the desired claim now follows. 
\end{proof}

Set $\Bcal_{\Ocal_v}\coloneqq\Res_{\Ocal_w/\Ocal_v}\Acal_{L,\Ocal_w}$, and denote its special fibre by $\Bcal_{k_v}$. Let $\Acal_{L,k_w}$ be the special fibres of $\Acal_{L,\Ocal_w}$. Then have a natural $G_w$-equivariant surjective map
\begin{equation}\label{eq:Edixhoven-Fil}
    \begin{tikzcd}
       \Bcal_{k_v} \arrow[r, two heads] & \Res_{k_w/k_v} \Acal_{L,k_w}
    \end{tikzcd} 
\end{equation}
    with smooth connected unipotent kernel denoted by $\Frm^1\Bcal_{k_v}$; indeed, this can be seen by realising $\Bcal_{k_v}$ as the Weil restriction of scalars for a nilpotent thickening $\Ocal_w\otimes_{\Ocal_v}k_v\twoheadrightarrow k_w$; \emph{cf.} \cite[\S5.1]{Edixhoven:Tame} or \cite[Proposition~A.5.12]{ConradGabberPrasad:PRedGp2}. 
    
    We retain the setting that $X$ is defined over a \emph{perfect} field $k$ of characteristic $p>0$, so $k_v$ is perfect as well. Then $\Frm^1\Bcal_{k_v}$ is a vector group over $k_v$ by \ \cite[Corollary~B.2.7]{ConradGabberPrasad:PRedGp2}; i.e., it is a direct product of copies of $\GG_a$.
\begin{prop}\label{prop:Unip-Rad-Descent}
\setcounter{equation}{\value{equation}-1}
\begin{subequations}
    In the above setting, if $L_w/K_v$ is tame then the short exact sequence \eqref{eq:Edixhoven-Fil} induces the following short exact sequence 
    \begin{equation}\label{eq:Unip-Rad-Descent:Group}
    \begin{tikzcd}[column sep = scriptsize]
            0 \arrow[r] & \big(\Frm^1\Bcal_{k_v}\big)^{G_w} \arrow[r] & \big(\Bcal_{k_v} \big)^{G_w}\cong\Acal_{k_v} \arrow[r]&  \big(\Res_{k_w/k_v}\Acal_{L,k_w}\big)^{G_w}  \arrow[r]& 0
    \end{tikzcd},
    \end{equation}
    which induce the following isomorphism
    \begin{equation}\label{eq:Unip-Rad-Descent:Lie}
         \big(\Lie(\Acal_L)(k_w)\big)^{G_w} \cong \coker\left(
    \big(\Lie(\Acal_L)(\mfr_w/\mfr_w^{|I_w|}) \big)^{G_w} \hookrightarrow \Lie(\Acal)(k_v)
    \right).
    \end{equation}
    Moreover, $\big(\Frm^1\Bcal_{k_v}\big)^{G_w}$ is a \emph{vector group} over $k_v$ and the sequence \eqref{eq:Unip-Rad-Descent:Group} remains exact on $k_v$-points.
\end{subequations}
\end{prop}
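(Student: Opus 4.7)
The plan is to identify the relevant objects via the Lie functor, establish a single cohomological triviality statement using tameness, and then assemble both exact sequences.

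I will first apply the Lie functor to \eqref{eq:Edixhoven-Fil}. Using the standard identifications $\Lie(\Bcal_{\Ocal_v}) = \Lie(\Res_{\Ocal_w/\Ocal_v}\Acal_{L,\Ocal_w}) = \Lie(\Acal_L)$ (via restriction of scalars) and $\Lie(\Res_{k_w/k_v}\Acal_{L,k_w}) = \Lie(\Acal_L)(k_w)$, I obtain
\[
\Lie(\Frm^1\Bcal_{k_v}) \cong \mfr_w\Lie(\Acal_L)/\mfr_v\Lie(\Acal_L) = \Lie(\Acal_L)(\mfr_w/\mfr_w^{|I_w|}) =: V
\]
as $k_v[G_w]$-modules, where the $G_w$-action on $V$ is $k_v$-linear because $k_v$ is pointwise fixed. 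Since $\Frm^1\Bcal_{k_v}$ is a vector group, its $k_v$-points coincide canonically with its Lie algebra $V$, and the fixed subscheme $(\Frm^1\Bcal_{k_v})^{G_w}$ is the $k_v$-subvector space $V^{G_w}$, hence itself a vector group.

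The main obstacle is the cohomological triviality of $V$ as a $k_v[G_w]$-module. This rests on Proposition~\ref{prop:Noether}: tameness of $L_w/K_v$ forces both $\mfr_w\Lie(\Acal_L)$ and $\mfr_v\Lie(\Acal_L)$, being finite free semilinear $G_w$-representations over $\Ocal_w$, to be free as $\Ocal_v[G_w]$-modules, and in particular cohomologically trivial. The long exact sequence of Tate cohomology applied to
\[
0 \to \mfr_v\Lie(\Acal_L) \to \mfr_w\Lie(\Acal_L) \to V \to 0
\]
then shows that $V$ is cohomologically trivial; in particular $\Hrm^1(G_w, V \otimes_{k_v} S) = 0$ for every $k_v$-algebra $S$.

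It remains to assemble. Since $\Frm^1\Bcal_{k_v}$ is a vector group over a field, its fppf cohomology over any affine $k_v$-scheme vanishes, so \eqref{eq:Edixhoven-Fil} stays exact on $S$-points for every $k_v$-algebra $S$. Taking $G_w$-invariants is then exact by the vanishing in the previous step; combined with the isomorphism $\Bcal_{k_v}^{G_w} \cong \Acal_{k_v}$ inherited from the proof of Proposition~\ref{prop:descent-conn-Neron-models}(\ref{prop:descent-conn-Neron-models:tame}) (via Edixhoven's theorem and base change of Weil restriction), this gives \eqref{eq:Unip-Rad-Descent:Group} simultaneously as a short exact sequence of fppf sheaves and on $k_v$-points. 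Finally, applying the Lie functor to \eqref{eq:Unip-Rad-Descent:Group} (exact since all three terms are smooth) and identifying $\Lie((\Res_{k_w/k_v}\Acal_{L,k_w})^{G_w}) = \Lie(\Acal_L)(k_w)^{G_w}$ yields
\[
0 \to V^{G_w} \to \Lie(\Acal)(k_v) \to \Lie(\Acal_L)(k_w)^{G_w} \to 0,
\]
from which the isomorphism \eqref{eq:Unip-Rad-Descent:Lie} follows.
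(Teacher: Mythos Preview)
Your overall strategy---proving cohomological triviality of $V=\Lie(\Frm^1\Bcal_{k_v})$ for $G_w$ via Proposition~\ref{prop:Noether} and then deducing everything from that---is attractive, but there is a genuine gap at the point where you pass from $V$ to the functor of points of $\Frm^1\Bcal_{k_v}$. You assert that ``since $\Frm^1\Bcal_{k_v}$ is a vector group, its $k_v$-points coincide canonically with its Lie algebra $V$'' and then use $H^1(G_w,V\otimes_{k_v}S)=0$ to control $H^1(G_w,\Frm^1\Bcal_{k_v}(S))$. Both steps presuppose a $G_w$-equivariant identification $\Frm^1\Bcal_{k_v}(S)\cong V\otimes_{k_v}S$, i.e.\ that the $G_w$-action on the vector group is \emph{linear}. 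In characteristic~$p$ the automorphism group of $\GG_a^n$ is strictly larger than $\GL_n$ (it contains additive polynomials with Frobenius terms), and since $|G_w|$ may be divisible by $p$ through the unramified quotient $G_w/I_w$, you cannot linearise the action by the usual prime-to-$p$ averaging argument. Without linearity, neither the vector-group property of $(\Frm^1\Bcal_{k_v})^{G_w}$ nor the cohomological vanishing follows from what you proved about $V$.

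The paper avoids this entirely by first base-changing along the maximal unramified subextension $K'_w=(L_w)^{I_w}$, reducing to the case $G_w=I_w$. Then $|I_w|$ is prime to~$p$ and is therefore invertible on the $p$-torsion abelian group $\Frm^1\Bcal_{k_w}(S)$ for every $S$, so $H^1(I_w,\Frm^1\Bcal_{k_w}(S))=0$ and $(\Frm^1\Bcal_{k_w})^{I_w}$ is a vector group, with no linearity hypothesis needed. Your route can also be salvaged without this reduction: use Edixhoven's full filtration $\Frm^1\supset\Frm^2\supset\cdots\supset\Frm^{|I_w|}=0$, whose graded pieces $\Frm^i/\Frm^{i+1}$ are \emph{canonically} the vector groups attached to $\Lie(\Acal_L)(\mfr_w^i/\mfr_w^{i+1})$ (the identification coming from deformation along a square-zero ideal, where the $G_w$-action is genuinely linear); each graded piece is cohomologically trivial for $G_w$ by the same Proposition~\ref{prop:Noether} argument you gave for $V$, and one then inducts up the filtration to obtain $H^1(G_w,\Frm^1\Bcal_{k_v}(S))=0$.
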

\begin{proof}
    Suppose that we know the sequence \eqref{eq:Unip-Rad-Descent:Group} is exact and that $\big(\Frm^1\Bcal_{k_v}\big)^{G_w}$ is a vector group. Then the isomorphism \eqref{eq:Unip-Rad-Descent:Lie} is a direct consequence of the short exact sequence of Lie algebras induced from \eqref{eq:Unip-Rad-Descent:Group}, and the sequence \eqref{eq:Unip-Rad-Descent:Group} remains exact on $k_v$-points since $\Hrm^1(k_v,(\Frm^1\Bcal_{k_w})^{G_w})$ is trivial by the Hilbert normal basis theorem. 

    It remains to show that the sequence \eqref{eq:Unip-Rad-Descent:Group} is exact and that $\big(\Frm^1\Bcal_{k_v}\big)^{G_w}$ is a vector group, both of which can be checked after base change to $k_w$. Set $K_w'\coloneqq (L_w)^{I_w}$, and let $\Ocal_w'$ denote its valuation ring. Then as $\Ocal'_w$ is a finite \'etale extension of $\Ocal_v$, we have
    \[
    \Bcal_{\Ocal_v}\times_{\Spec\Ocal_v}\Spec\Ocal'_w \cong \Res_{(\Ocal_w\otimes_{\Ocal_v}\Ocal'_w)/\Ocal'_w}\Acal_{L,\Ocal_w} \cong \prod_{G_w/I_w}\big( \Res_{\Ocal_w/\Ocal'_w}\Acal_{L,\Ocal_w} \big),
    \]
    where the natural $G_w$-action is the extension of the natural $I_w$-action on $\Res_{\Ocal_w/\Ocal'_w}\Acal_{L,\Ocal_w}$ so that $G_w$ acts transitively on the factors. Therefore, by taking $G_w$-invariants we get 
    \[
    (\Bcal_{\Ocal_v})^{G_w}\times_{\Spec\Ocal_v}\Spec\Ocal'_w \cong \big( \Res_{\Ocal_w/\Ocal'_w}\Acal_{L,\Ocal_w} \big)^{I_w}.
    \]
    If we let $\Bcal_{k_w}$ denote the special fibre of $\Res_{\Ocal_w/\Ocal'_w}\Acal_{L,\Ocal_w}$, then we can also show that
    \[
    (\Frm^1\Bcal_{k_v})^{G_w}\times_{\Spec k_v}\Spec k_w \cong  (\Frm^1\Bcal_{k_w})^{I_w}.
    \]
    Therefore, to prove the proposition we may replace $K_v$ with $K'_w$ and suppose $G_w = I_w$.
    
    Now, suppose that $L_w/K_v$ is totally ramified, so we write $\Bcal_{k_w}$ and $\Acal_{k_w}$ for $\Bcal_{k_v}$ and $\Acal_{k_v}$. Then by tameness, $|I_w|$ acts invertibly on the vector group $\Frm^1\Bcal_{k_w}$, which yields the following short exact sequence of smooth $k_w$-group schemes
    \[
    \begin{tikzcd}[column sep = scriptsize]
        0 \arrow[r] & \big(\Frm^1\Bcal_{k_w}\big)^{I_w} \arrow[r] & \big(\Bcal_{k_w}\big)^{I_w}\cong \Acal_{k_w} \arrow[r] & \big( \Acal_{L,k_w}\big)^{I_w} \arrow[r] & 0
    \end{tikzcd}.
    \]
    Clearly, $\big(\Frm^1\Bcal_{k_w}\big)^{I_w}$ is still a vector group. 
    This concludes the proof.
\end{proof}

\begin{cor}\label{cor:Unip-Rad-Descent}
    In the same setting as in Proposition~\ref{prop:Unip-Rad-Descent}, if $\Acal^\circ_{L,\Ocal}$ is semi-abelian then we have $(\Frm^1\Bcal_{k_v})^{G_w}\cong \Rscr_u(\Acal^\circ_{k_v})$, the unipotent radical of the neutral component of $\Acal_{k_v}$. 
\end{cor}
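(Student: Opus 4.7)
The plan is to exploit the short exact sequence of $k_v$-group schemes
\[0 \to (\Frm^1\Bcal_{k_v})^{G_w} \to \Acal_{k_v} \to (\Res_{k_w/k_v}\Acal_{L,k_w})^{G_w} \to 0\]
established in Proposition~\ref{prop:Unip-Rad-Descent}. One inclusion is immediate: since $(\Frm^1\Bcal_{k_v})^{G_w}$ is a vector group by the same proposition, it is smooth, connected, and unipotent; being a closed connected subgroup of $\Acal_{k_v}$, it lies in $\Acal^\circ_{k_v}$, and hence in $\Rscr_u(\Acal^\circ_{k_v})$. For the reverse inclusion, observe that the image of $\Rscr_u(\Acal^\circ_{k_v})$ in $(\Res_{k_w/k_v}\Acal_{L,k_w})^{G_w}$ is again smooth, connected, and unipotent, so it suffices to show that
\[\big((\Res_{k_w/k_v}\Acal_{L,k_w})^{G_w}\big)^\circ \quad \text{is semi-abelian,}\]
ruling out a non-trivial image.

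To establish this semi-abelianness, I would base-change to an algebraic closure $\bar k_v$. Using $\bar k_v \otimes_{k_v} k_w \cong \prod_\sigma \bar k_v$, with $\sigma$ ranging over the $k_v$-embeddings $k_w\hookrightarrow \bar k_v$, the Weil restriction decomposes as $\prod_\sigma \Acal_{L,k_w}\otimes_{k_w,\sigma}\bar k_v$. Under the resulting $G_w$-action, $I_w$ acts $k_w$-linearly on each factor while $G_w/I_w = \Gal(k_w/k_v)$ permutes the factors. Taking $I_w$-invariants factor-by-factor and then $G_w/I_w$-invariants via projection to a fixed embedding $\sigma_0$ yields a $\bar k_v$-isomorphism
\[\big((\Res_{k_w/k_v}\Acal_{L,k_w})^{G_w}\big)\otimes_{k_v}\bar k_v \cong (\Acal_{L,k_w})^{I_w}\otimes_{k_w,\sigma_0}\bar k_v.\]
The semi-abelianness problem thus reduces to showing that $\big((\Acal_{L,k_w})^{I_w}\big)^\circ = \big((\Acal^\circ_{L,k_w})^{I_w}\big)^\circ$ is semi-abelian over $k_w$.

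For this final reduction, the hypothesis that $\Acal^\circ_{L,\Ocal_w}$ is semi-abelian gives a short exact sequence $0\to T\to \Acal^\circ_{L,k_w}\to B\to 0$ with $T$ a torus and $B$ an abelian variety. By tameness, $|I_w|$ is coprime to $p$, so taking $I_w$-invariants preserves smoothness; the neutral component $T^{I_w,\circ}$ is visibly a torus, and $B^{I_w,\circ}$ is an abelian variety. The hardest piece of the plan is verifying that the induced sequence on neutral components of fixed points
\[0 \to T^{I_w,\circ} \to \big((\Acal^\circ_{L,k_w})^{I_w}\big)^\circ \to B^{I_w,\circ} \to 0\]
remains short exact. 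Surjectivity on the right is the crux: given an $I_w$-fixed $\bar k_v$-point of $B$, I would lift it to $\Acal^\circ_{L,k_w}(\bar k_v)$ and apply the averaging operator $\tfrac{1}{|I_w|}\sum_{g\in I_w} g(-)$, which produces an $I_w$-fixed lift. This realises the target as an extension of an abelian variety by a torus, completing the plan.
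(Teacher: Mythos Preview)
Your overall strategy matches the paper's: both use the exact sequence from Proposition~\ref{prop:Unip-Rad-Descent} and argue that the neutral component of $(\Res_{k_w/k_v}\Acal_{L,k_w})^{G_w}$ has trivial unipotent radical, which forces $(\Frm^1\Bcal_{k_v})^{G_w}$ to absorb all of $\Rscr_u(\Acal^\circ_{k_v})$. The paper simply asserts this triviality in one line; you supply the base-change reduction to $\big((\Acal^\circ_{L,k_w})^{I_w}\big)^\circ$, which is welcome extra detail.

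There is, however, a gap in the averaging step. The operator $\tfrac{1}{|I_w|}\sum_{g}g(-)$ is not well-defined on $\Acal^\circ_{L,k_w}(\bar k_v)$: the group is divisible, but division by $|I_w|$ is not unique because of $|I_w|$-torsion, and an arbitrary $|I_w|$-th root of the fixed element $\sum_g g(a)$ need not itself be $I_w$-fixed. The obstruction to lifting an $I_w$-fixed $\bar k_v$-point of $B$ lies in $H^1(I_w,T(\bar k_v))$, which can be nonzero (e.g.\ $\Hom(I_w,\bar k_v^\times)\cong\mu_{|I_w|}(\bar k_v)$ when $I_w$ acts trivially on $T=\Gm$). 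Two easy repairs are available. First, you can replace the pointwise argument by a Lie-algebra one: since $|I_w|$ is prime to $p$, taking $I_w$-invariants on the surjection $\Lie(\Acal^\circ_{L,k_w})\twoheadrightarrow\Lie(B)$ stays surjective, so the map of smooth connected groups on neutral components of fixed points has surjective differential at the identity and is therefore surjective. Second, and more simply, you can bypass the exactness entirely: any smooth connected unipotent subgroup of $\big((\Acal^\circ_{L,k_w})^{I_w}\big)^\circ$ maps trivially to the abelian variety $B$, hence lies in $T$, and a torus has no nontrivial smooth connected unipotent subgroups. Either route closes the gap and completes your plan.
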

\begin{proof}
    Since $\Acal^\circ_{L,k_w}$ is semi-abelian (hence, with trivial unipotent radical), the unipotent radical of the neutral component of $\big(\Res_{k_w/k_v}\Acal_{L,k_w}\big)^{G_w}$ is also trivial, so  the exact sequence \eqref{eq:Unip-Rad-Descent:Group} identifies $(\Frm^1\Bcal_{k_v})^{G_w}$ with the unipotent radical of $\Acal_{k_v}^\circ$. 
\end{proof}

Now, choose a dense open subscheme $U\subset X$ contained in both the good reduction locus for $A/K$ and the unramified locus for $\pi$, and set $U_L\coloneqq \pi^{-1}(U)$. Let $Z\coloneqq X\setminus U$ and $Z_L\coloneqq X_L\setminus U_L$ respectively denote the reduced complements. 
\begin{prop}\label{prop:descent-Neron}
\setcounter{equation}{\value{equation}-1}
\begin{subequations}
    We make the same assumption as in Proposition~\ref{prop:descent-conn-Neron-models}(\ref{prop:descent-conn-Neron-models:tame}), and let $Z$ and $Z_L$ be as above. Then we have natural inclusions
\begin{equation}\label{eq:descent-Neron:incl}
\begin{tikzcd}
        \Lie(\Acal)(-Z) \arrow[hook]{r} & \big(\Lie(\Acal_L)(-Z_L)\big)^G \arrow[hook]{r} &\Lie(\Acal)(-(Z\cap U'))
    \end{tikzcd}.
\end{equation}
    that restrict to isomorphisms over $U'$, and the cokernel of the first inclusion is supported exactly on  $X\setminus U'$. Furthermore,  we have
    \begin{equation}\label{eq:descent-Neron:coker}
    \frac{\Lie(\Acal)(-(Z\cap U'))}{\big(\Lie(\Acal_L)(-Z_L)\big)^G} \cong \bigoplus_{v\notin U'}\big(\Lie(\Acal_L)(k_{\tilde v})\big)^{G_{\tilde v}},
    \end{equation}
    where we choose a preimage $\tilde v\in \pi^{-1}(v)$ for each $v\notin U'$.
    \end{subequations}
\end{prop}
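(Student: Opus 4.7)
The argument is a purely local calculation at each $v \in |X|$. For a chosen $w \in \pi^{-1}(v)$, Galois descent identifies the completed stalk
\[
\big((\pi_\ast\Lie(\Acal_L)(-Z_L))^G\big)\widehat{_v} \;\cong\; \big(\mfr_w^{\epsilon_v}\Lie(\Acal_L)(\Ocal_w)\big)^{G_w},
\]
where $\epsilon_v = 1$ if $v \in Z$ and $\epsilon_v = 0$ otherwise; likewise the stalks of $\Lie(\Acal)(-Z)$ and $\Lie(\Acal)(-(Z\cap U'))$ at $v$ are $\mfr_v^{\epsilon_v}\Lie(\Acal)(\Ocal_v)$ and $\mfr_v^{\epsilon_v'}\Lie(\Acal)(\Ocal_v)$, respectively, where $\epsilon_v' = 1$ if $v \in Z\cap U'$ and $\epsilon_v' = 0$ otherwise. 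It thus suffices to compare three $\Ocal_v$-lattices in $\Lie(A)(K_v)$ at each $v$.

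Two local inputs drive the proof. First, at every $v$ where $L_w/K_v$ is tame --- which by hypothesis includes every $v \notin U'$, and holds trivially at unramified $v$ --- $|I_w|$ is a unit in $\Ocal_v$, so every $\Ocal_v$-torsion-free $\Ocal_v[G_w]$-module (such as $\mfr_w^j\Lie(\Acal_L)(\Ocal_w)$) is cohomologically trivial for $G_w$ by Proposition~\ref{prop:Noether}; in particular, taking $G_w$-invariants preserves short exactness of sequences of such modules. Second, Proposition~\ref{prop:descent-conn-Neron-models}(\ref{prop:descent-conn-Neron-models:tame}) gives $(\Lie(\Acal_L)(\Ocal_w))^{G_w} = \Lie(\Acal)(\Ocal_v)$, and a direct valuation-theoretic check gives $(\mfr_w)^{G_w} = \mfr_v$.

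I then split into cases. If $v \in U'$, Proposition~\ref{prop:descent-conn-Neron-models}(\ref{prop:descent-conn-Neron-models:coker}) gives $\Lie(\Acal_L)(\Ocal_w) = \Lie(\Acal)(\Ocal_v)\otimes_{\Ocal_v}\Ocal_w$ with trivial $G_w$-action on the first factor, so $G_w$-invariants commute with the tensor and $(\mfr_w\Lie(\Acal_L)(\Ocal_w))^{G_w} = \mfr_v\Lie(\Acal)(\Ocal_v)$; thus both inclusions of \eqref{eq:descent-Neron:incl} are equalities at $v$. If instead $v \notin U'$, the tameness hypothesis applies, and taking $G_w$-invariants of the exact sequence $0 \to \mfr_w\Lie(\Acal_L)(\Ocal_w) \to \Lie(\Acal_L)(\Ocal_w) \to \Lie(\Acal_L)(k_w) \to 0$ yields
\[
0 \to \big(\mfr_w\Lie(\Acal_L)(\Ocal_w)\big)^{G_w} \to \Lie(\Acal)(\Ocal_v) \to \big(\Lie(\Acal_L)(k_w)\big)^{G_w} \to 0.
\]
Since $v \notin Z \cap U'$, the right end of the second inclusion at $v$ is $\Lie(\Acal)(\Ocal_v)$, so this sequence identifies the second inclusion and yields the local summand $(\Lie(\Acal_L)(k_{\tilde v}))^{G_{\tilde v}}$ of \eqref{eq:descent-Neron:coker}. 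For the first inclusion, $\mfr_v\Lie(\Acal)(\Ocal_v) \subseteq (\mfr_w\Lie(\Acal_L)(\Ocal_w))^{G_w}$ follows from $\mfr_v \subseteq \mfr_w$ combined with $\Lie(\Acal) \hookrightarrow \Lie(\Acal_L)^G$, and the cokernel is a subquotient of $\Lie(\Acal)(k_v)$, hence supported on $v$.

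Combining the two cases globalises to both inclusions of \eqref{eq:descent-Neron:incl}, the support statement for the first cokernel, and the isomorphism \eqref{eq:descent-Neron:coker} for the second. The main subtlety is ensuring that $G_w$-invariants remain exact on the relevant lattices at places $v \notin U'$; this is precisely where the tame ramification hypothesis enters, through the cohomological triviality supplied by Proposition~\ref{prop:Noether}. Once this exactness is in hand, the rest of the proof is bookkeeping of fractional ideals together with the descent identification of Proposition~\ref{prop:descent-conn-Neron-models}(\ref{prop:descent-conn-Neron-models:tame}).
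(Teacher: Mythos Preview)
Your argument is essentially correct and in fact somewhat more direct than the paper's: where the paper routes the computation at $v\notin U'$ through Proposition~\ref{prop:Unip-Rad-Descent} (the Weil restriction $\Bcal$ and the Edixhoven filtration $\Frm^1\Bcal$), you simply take $G_w$-invariants of the obvious short exact sequence $0\to\mfr_w\Lie(\Acal_L)(\Ocal_w)\to\Lie(\Acal_L)(\Ocal_w)\to\Lie(\Acal_L)(k_w)\to 0$, using tameness to supply the needed vanishing of $\Hrm^1\big(G_w,\mfr_w\Lie(\Acal_L)(\Ocal_w)\big)$. This is a legitimate and pleasant shortcut for this proposition (though the paper still needs Proposition~\ref{prop:Unip-Rad-Descent} elsewhere, e.g.\ in Remark~\ref{rmk:vol}). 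One phrasing issue: your sentence ``every $\Ocal_v$-torsion-free $\Ocal_v[G_w]$-module \dots\ is cohomologically trivial'' is false as stated, since $|G_w/I_w|$ may be divisible by $p$; what Proposition~\ref{prop:Noether} gives you is cohomological triviality for semilinear $G_w$-representations over $\Ocal_w$, which is exactly what $\mfr_w\Lie(\Acal_L)(\Ocal_w)$ is. Your application is correct, only the general claim needs tightening.

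There is, however, one genuine gap. The proposition asserts that the cokernel of the first inclusion is supported \emph{exactly} on $X\setminus U'$, and you only establish the containment: you show the cokernel at $v\notin U'$ is a subquotient of $\Lie(\Acal)(k_v)$, but you never verify it is nonzero. Concretely, you must rule out $\mfr_v\Lie(\Acal)(\Ocal_v)=(\mfr_w\Lie(\Acal_L)(\Ocal_w))^{G_w}$ at $v\notin U'$. The paper closes this gap with a Nakayama argument: if that equality held, your exact sequence would give $\Lie(\Acal)(k_v)\cong\big(\Lie(\Acal_L)(k_{\tilde v})\big)^{G_{\tilde v}}$, whence the natural map $\Lie(\Acal)(k_v)\otimes_{k_v}k_{\tilde v}\hookrightarrow\Lie(\Acal_L)(k_{\tilde v})$ is an isomorphism for dimension reasons, and then Nakayama forces $\pi^\ast\Lie(\Acal)\to\Lie(\Acal_L)$ to be an isomorphism at $\tilde v$, contradicting $v\notin U'$ via Proposition~\ref{prop:descent-conn-Neron-models}(\ref{prop:descent-conn-Neron-models:coker}). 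You should add this step.
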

\begin{rmk}\label{rmk:Lie-inv}
    The last displayed equation is independent of the choice of $\tilde v$ since we have 
\[\big(\Lie(\Acal_L)(k_{\tilde v})\big)^{G_{\tilde v}}\cong \Big(\bigoplus_{w\mid v}\Lie(\Acal_L)(k_w)\Big)^G.\]
\end{rmk}
\begin{proof}[Proof of {Proposition~\ref{prop:descent-Neron}}]
    We  use the notation from Proposition~\ref{prop:Unip-Rad-Descent} and its proof. By Proposition~\ref{prop:descent-conn-Neron-models}(\ref{prop:descent-conn-Neron-models:tame}), we have a natural isomorphism $\Lie(\Acal)(-Z)|_{U'}\cong\big(\Lie(\Acal_L)(-Z_L)\big)^G|_{U'}$. 
    For any $v\notin U'$ we have
    \[
    \frac{\big(\Lie(\Acal)(-(Z\cap U')\big)\widehat{_v}}{\big(\Lie(\Acal)(-Z))\big)\widehat{_v}} \cong\frac{\Lie(\Acal)(\Ocal_v)}{\Lie(\Acal)(\mfr_v)} \cong \Lie(\Acal)(k_v),
    \]
    and the preimage of $\Lie(\Frm^1\Bcal_{k_{\tilde v}})^{G_{\tilde v}}$ in $\Lie(\Acal)(\Ocal_v)$ can be naturally identified with the completed stalk of $\big(\Lie(\Acal_L)(-Z_L)\big)^G$ at $v$. Thus we get the desired inclusions of vector bundles. Furthermore, we have 
        \[\frac{\Lie(\Acal)(-(Z\cap U'))}{\big(\Lie(\Acal_L)(-Z_L)\big)^G} \cong \bigoplus_{v\notin U'}\frac{\Lie(\Acal)(k_v)}{\Lie(\Frm^1\Bcal_{k_{\tilde v}})^{G_{\tilde v}}}\cong \bigoplus_{v\notin U'}\big(\Lie(\Acal_L)(k_{\tilde v})\big)^{G_{\tilde v}},\]
        where the last isomorphism follows from Proposition~\ref{prop:Unip-Rad-Descent}. 
        
        Lastly, we show that $\Lie(\Acal)(k_v)\not\cong\big(\Lie(\Acal_L)(k_{\tilde v})\big)^{G_{\tilde v}}$ for any $v\notin U'$ and $\tilde v \mid v$; i.e., the cokernel of $\Lie(\Acal)(-Z) \hookrightarrow \big(\Lie(\Acal_L)(-Z_L)\big)^G$ is supported exactly on  $X\setminus U'$. Indeed, if we have $\Lie(\Acal)(k_v)\cong\big(\Lie(\Acal_L)(k_{\tilde v})\big)^{G_{\tilde v}}$, then it follows that the following composition
       \[
       \Lie(\Acal)(k_v)\otimes_{k_v}k_{\tilde v} \cong \big(\Lie(\Acal_L)(k_{\tilde v})\big)^{G_{\tilde v}}\otimes_{k_v}k_{\tilde v} \hookrightarrow \Lie(\Acal_L)(k_{\tilde v})
       \]
        is an isomorphism for the dimension reason. Hence, by the Nakayama lemma, the natural map $\pi^\ast\Lie(\Acal)\to\Lie(\Acal_L)$ is isomorphic at $\tilde v$, so $v\in U'$ by Proposition~\ref{prop:descent-conn-Neron-models}(\ref{prop:descent-conn-Neron-models:coker}). This concludes the proof.
\end{proof}

     Under the same setting as in Proposition~\ref{prop:descent-conn-Neron-models}(\ref{prop:descent-conn-Neron-models:tame}), choose integers $r_{w,i}\in\{0,\cdots,|I_w|-1\}$ for any $w\in Z'_L$ so that $\Lie(\Acal_L)(k_w)\cong\bigoplus_i (\mfr_w^{-r_{w,i}}/\mfr_w^{-r_{w,i}+1})$; \emph{cf.} Lemma~\ref{lem:tame-inertial-action}. Then one can shows that
\begin{equation}\label{eq:rw}
\dim_{k_v}\big(\Lie(\Acal_L)(k_{\tilde v})\big)^{G_{\tilde v}} = d_{\tilde v}'= \left|\{i \text{ such that } r_{w,i}=0\} \right|. 
\end{equation}

Let us record the following immediate corollary.
\begin{cor}\label{cor:descent-Neron}
Under the same assumption as in Proposition~\ref{prop:descent-Neron}, the following properties hold.
    \begin{enumerate}
        \item\label{cor:descent-Neron:EC} If $A$ is an elliptic curve, then we have  $\big(\Lie(\Acal_L)(-Z_L)\big)^G \riso \Lie(\Acal)(-(Z\cap U'))$.
        \item\label{cor:descent-Neron:deg} We have
        \begin{multline*}
            \deg\Big(\big(\Lie(\Acal_L)(-Z_L)\big)^G\Big) = \deg\Big(\Lie(\Acal)(-(Z\cap U'))\Big) \\
            -\sum_{v\notin U'}[k_v:k]\cdot\dim_{k_v}\big(\Lie(\Acal_L)(k_{\tilde v})\big)^{G_{\tilde v}},
        \end{multline*}
        where we choose $\tilde v\in \pi^{-1}(v)$ for each $v\notin U'$.
        \item\label{cor:descent-Neron:perf} Suppose additionally that $L/K$ is weakly ramified everywhere. Then the $G$-equivariant vector bundle $\Lie(\Acal_L)(-Z_L)$ satisfies the condition  \eqref{eq:Ew}, so Theorem~\ref{th:EquivRR} holds. Furthermore, we can compute  $l_{w,i}$'s defined in \eqref{eq:lw} as follows:
        \[
        l_{w,i} = \left\{
        \begin{array}{ll}
         |I_w/P_w|-1,    & \forall\,i \text{ if } w\in Z^{\ram}_L\setminus Z'_L; \\
         r_{w,i} -1      & \text{ if } w\in Z'_L\text{ and }r_{w,i}\ne0;\\
         |I_w/P_w|-1     & \text{ if } w\in Z'_L\text{ and }r_{w,i}=0,
        \end{array}
        \right.
        \]
        where $r_{w,i}$'s are defined in \eqref{eq:rw}.
    \end{enumerate}
\end{cor}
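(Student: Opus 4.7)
The plan is to read off all three claims from Proposition~\ref{prop:descent-Neron} by combining the exact sequence
\[
0 \to (\Lie(\Acal_L)(-Z_L))^G \to \Lie(\Acal)(-(Z\cap U')) \to \bigoplus_{v\notin U'}(\Lie(\Acal_L)(k_{\tilde v}))^{G_{\tilde v}} \to 0
\]
with the local structure of $\Lie(\Acal_L)$ at each $w \in Z_L^{\ram}$, as furnished by Lemma~\ref{lem:tame-inertial-action} together with the standing hypothesis that $L/K$ is tame at all $v\notin U'$. The key observation, which will be used repeatedly, is that in the tame case one has a semilinear $I_w$-isomorphism $\widehat{\Lie(\Acal_L)}_w \cong \bigoplus_{i=1}^{\dim A} \mfr_w^{-r_{w,i}}$ with $r_{w,i} \in \{0,\ldots,|I_w|-1\}$, and direct inspection of the inclusion $\pi^\ast\widehat{\Lie(\Acal)}_w \hookrightarrow \widehat{\Lie(\Acal_L)}_w$ (computed after identifying the $I_w$-invariants on both sides via Proposition~\ref{prop:descent-conn-Neron-models}(\ref{prop:descent-conn-Neron-models:tame})) shows that this inclusion is an isomorphism if and only if every $r_{w,i}$ vanishes.

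For part~(\ref{cor:descent-Neron:EC}), the elliptic case has rank one, hence a single exponent $r_w$; the invariant space $(\Lie(\Acal_L)(k_{\tilde v}))^{I_{\tilde v}}$ is therefore nonzero only when $r_w = 0$, but by the observation above this would force $\pi^\ast\Lie(\Acal) \riso \Lie(\Acal_L)$ at $\tilde v$, contradicting $v\notin U'$ via Proposition~\ref{prop:descent-conn-Neron-models}(\ref{prop:descent-conn-Neron-models:coker}). Part~(\ref{cor:descent-Neron:deg}) is then purely formal: take degrees in the exact sequence \eqref{eq:descent-Neron:coker} and use that the skyscraper at $v$ contributes $[k_v:k]\cdot\dim_{k_v}(\Lie(\Acal_L)(k_{\tilde v}))^{G_{\tilde v}}$ to the total degree.

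For part~(\ref{cor:descent-Neron:perf}), the main step is the verification of condition~\eqref{eq:Ew} at each wild place $w \in Z_L^{\wild}$. By Assumption~\ref{ass:intro}, $A$ has semistable reduction at $v = \pi(w)$, so Proposition~\ref{prop:descent-conn-Neron-models}(\ref{prop:descent-conn-Neron-models:sst}) puts $v$ in $U'$ and yields a $G_w$-equivariant identification $\widehat{\Lie(\Acal_L)}_w \cong \pi^\ast\widehat{\Lie(\Acal)}_w \cong \Ocal_w^{\oplus \dim A}$, with the semilinear action coming entirely from the natural $G_w$-action on $\Ocal_w$. Twisting by $-Z_L$ replaces $\Ocal_w^{\oplus \dim A}$ by $\mfr_w^{\oplus \dim A}$, so $n_{w,i}=-1$ for all $i$, and the congruence $-1 \equiv -1 \pmod{|P_w|}$ is automatic. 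The same semistable-reduction argument gives $n_{w,i}=-1$ at every $w \in Z_L^{\ram}\setminus Z'_L$, whereas at $w \in Z'_L$ one has $|P_w|=1$ by Assumption~\ref{ass:intro}, and Lemma~\ref{lem:tame-inertial-action} applied after twisting by $-Z_L$ yields $n_{w,i}=r_{w,i}-1$. Substituting each of these into the defining congruence~\eqref{eq:lw} then produces the three cases in the formula for $l_{w,i}$ by direct arithmetic. The main subtlety is purely bookkeeping: the semistability hypothesis in Assumption~\ref{ass:intro} is precisely what ensures wild ramification never occurs at places where the N\'eron model changes, so the ``hard'' case (wild with nontrivial $r_{w,i}$) never arises and the condition~\eqref{eq:Ew} is forced.
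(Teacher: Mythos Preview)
Your argument is correct in outline and follows the paper's approach: parts~(\ref{cor:descent-Neron:EC}) and~(\ref{cor:descent-Neron:deg}) are read off from the cokernel description in Proposition~\ref{prop:descent-Neron}, and part~(\ref{cor:descent-Neron:perf}) is the direct stalk computation showing $n_{w,i}=-1$ at places in $U'_L$ and $n_{w,i}\equiv r_{w,i}-1$ at the tame places in $Z'_L$, followed by plugging into~\eqref{eq:lw}.

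One point needs tightening. In part~(\ref{cor:descent-Neron:perf}) you repeatedly invoke Assumption~\ref{ass:intro} (semistability at wild places) to conclude that a wildly ramified $v$ lies in $U'$. But the Corollary's actual hypothesis is the weaker one inherited from Proposition~\ref{prop:descent-Neron}: that $L/K$ is \emph{tame at every $v\notin U'$}. Its contrapositive already gives $Z_L^{\wild}\subseteq U'_L$ directly, with no appeal to semistability. Likewise, for $w\in Z_L^{\ram}\setminus Z'_L$ the identification $\widehat{\Lie(\Acal_L)(-Z_L)}_w\cong\mfr_w^{\oplus d}$ follows simply from $w\in U'_L$ (so that $\pi^\ast\Lie(\Acal)\riso\Lie(\Acal_L)$ there), not from a ``semistable-reduction argument''. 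As written, your proof only establishes the special case where Assumption~\ref{ass:intro} holds; replacing those two citations with the tame-at-$X\setminus U'$ hypothesis recovers the full statement with no other change. (For part~(\ref{cor:descent-Neron:EC}), your route via $r_w\ne 0$ is equivalent to the paper's, which instead notes that the cokernel of the \emph{first} inclusion in~\eqref{eq:descent-Neron:incl} is nonzero at each $v\notin U'$ and uses that in rank one this exhausts the one-dimensional quotient $\Lie(\Acal)(k_v)$.)
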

\begin{proof}   
    Claim~(\ref{cor:descent-Neron:EC}) can be deduced from the statement on the cokernels of the inclusions in  \eqref{eq:descent-Neron:incl} in  Proposition~\ref{prop:descent-Neron}, noting that all the vector bundles involved are line bundles. Claim~(\ref{cor:descent-Neron:deg}) is clear from Proposition~\ref{prop:descent-Neron}. 

    Suppose that $L/K$ is weakly ramified everywhere and we want to verify \eqref{eq:Ew} for $\Ecal = \Lie(\Acal_L)(-Z_L)$, which is a local condition at each $w\in Z_L^{\wild}$. Since the natural inclusion $\Lie(\Acal_{X_L})(-Z_L) \hookrightarrow \Lie(\Acal_L)(-Z_L)$ restricts to an isomorphism over $U'_L$, the completed stalk of $\Lie(\Acal_L)(-Z_L)$ at each $w\in U'_L$ is $G_w$-equivariantly isomorphic to $\mfr_w^{\oplus d}$ with $d=\dim(A)$; \emph{cf.} \eqref{eq:nw-div-twist}. Now, the condition \eqref{eq:Ew} follows since the assumption of Proposition~\ref{prop:descent-Neron} implies that $Z_L^{\wild}\subseteq Z_L\cap U'_L$. It also implies that $l_{w,i}\equiv -1 \bmod{|P_w|}$ for any $w\in Z^{\ram}_L\cap U'_L$. The computation of $l_{w,i}$ is clear for $w\notin U'_L$, so Claim~(\ref{cor:descent-Neron:perf}) now follows.
\end{proof}

\begin{exa}\label{exa:descent-semistable}
    The assumption in Corollary~\ref{cor:descent-Neron}(\ref{cor:descent-Neron:perf}) is satisfied if $L/K$ is weakly ramified everywhere, $A$ has semistable reduction at all places of $L$, and $L/K$ is tame at all places in $K$ where $A$ does \emph{not} have semistable reduction. In that case, $|X\setminus U'|$ is precisely the set of places of non-semistable reduction for $A$. As a special case, if $A$ has semistable reduction at all places in $K$ then by Proposition~\ref{prop:descent-Neron} we have $\Lie(A)(-Z)\riso\big(\Lie(\Acal_L)(-Z_L)\big)^G$.
\end{exa}

\section{Review of Equivariant BSD and Hasse--Weil--Artin $L$-values}\label{sec:BKK}
We introduce a certain perfect $\widehat\ZZ G$-complex encoding the integral Galois module structure of the arithmetic invariants of $A/L$, and review the main result of \cite{BurnsKakdeKim:EquivBSDTame} on the equivariant BSD conjecture.
We maintain the setting of \S\ref{sec:Neron}, and additionally assume that $k$ is a \emph{finite field} of characteristic $p$. In particular, $L/K$ is an arbitrary finite Galois extension of global function fields. 

For each $w\in Z_L$, we set
    \[\Acal_L^\circ(\mfr_w)\coloneqq\ker\left(\Acal_L^\circ(\Ocal_w) \to \Acal_L^\circ(k_w)\right),\]
which is a $G_w$-stable pro-$p$ open subgroup of $A(L_w)$. Following \cite[\S2.2]{KatoTrihan:BSD}, we let $\RGamma_{\ari,Z_L}(U_L,\Acal_{L,\tors})\in D(\ZZ G)$ denote the mapping fibre of
\begin{equation}\label{eq:arith-coho}
    \begin{tikzcd}[column sep = small]
        \RGamma_{\fl}(U_L, \Acal_{L,\tors})\oplus \left( \stackbin[w\in Z_L]{}{\bigoplus} \Acal_L^\circ(\mfr_w)\stackbin{\Lrm}{\otimes}\QQ/\ZZ\right)[-1] \arrow{r}& \stackbin[w\in Z_L]{}{\bigoplus}  \RGamma_{\fl} (\Spec L_w, \Acal_{L,\tors})
    \end{tikzcd}.
\end{equation}

 We will often write $\whSC_{Z_L} = \whSC_{Z_L}(A,L/K)$.

\begin{defn}\label{def:SC}
    We set 
    \[\whSC_{Z_L}(A,L/K)\coloneqq\left(\RGamma_{\ari,Z_L}(U_L,\Acal_{L,\tors})\right)^\vee[-2] \in \Drm(\widehat\ZZ G),\] 
    where $(-)^\vee$ is the Pontryagin dual.
    For any prime $\ell$ (allowing $\ell = p$), we write 
    \[\SC_{Z_L,\ell}(A,L/K)\coloneqq \whSC_{Z_L}(A,L/K)\otimes_{\widehat\ZZ}\ZZ_\ell \in D(\ZZ_\ell G).\]
    We will often write $\whSC_{Z_L} = \whSC_{Z_L}(A,L/K)$ and $\SC_{Z_L,\ell} = \SC_{Z_L,\ell}(A,L/K)$ for simplicity.
\end{defn}

\begin{prop}[{\emph{cf.} \cite[\S2.5]{KatoTrihan:BSD}}]\label{prop:SC}
    We have $\Hrm^i(\whSC_{Z_L})=0$ for $i\notin [0,2]$. Furthermore, if $\Sha(A/L)$ is finite, then we have 
    \[\Hrm^0(\whSC_{Z_L}) \cong A^t(L)\otimes\widehat\ZZ\] 
    and a long exact sequence
    \[\xymatrix@1@C=1em{0\ar[r]&\Sel_{\QQ/\ZZ}(A/L)^\vee \ar[r]& \Hrm^1(\whSC_{Z_L})\ar[r]& \stackbin[w\in Z_L]{}{\bigoplus}\left(\Acal_L(k_w)\right)^\vee \ar[r]& \left(A(L)_{\tors}\right)^\vee\ar[r]&\Hrm^2(\whSC_{Z_L})\ar[r]&0}.\]
\end{prop}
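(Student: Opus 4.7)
The plan is to compute the cohomology of $\RGamma_{\ari, Z_L}(U_L, \Acal_{L, \tors})$ directly from its definition as a mapping fibre \eqref{eq:arith-coho}, and then to read off $\Hrm^i(\whSC_{Z_L})$ via Pontryagin dualising and shifting by $-2$; equivalently, I use the identity $\Hrm^i(\whSC_{Z_L}) \cong \Hrm^{2-i}(\RGamma_{\ari, Z_L}(U_L, \Acal_{L,\tors}))^\vee$.

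The first step is to apply the long exact cohomology sequence of \eqref{eq:arith-coho}. The global flat cohomology $\Hrm^i_{\fl}(U_L, \Acal_{L, \tors})$ and each local flat cohomology $\Hrm^i_{\fl}(\Spec L_w, \Acal_{L, \tors})$ are concentrated in degrees $[0, 2]$ (flat cohomology of a smooth, resp.\ finite flat, commutative group scheme over a smooth curve, resp.\ over a local field of positive characteristic), and the formal-group term $\Acal_L^\circ(\mfr_w) \otimes^{\Lrm}_{\ZZ} \QQ/\ZZ$ lives in degrees $[-1, 0]$, hence in $[0, 1]$ after the shift appearing in \eqref{eq:arith-coho}. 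A priori this gives $\RGamma_{\ari, Z_L}$ supported in degrees $[-1, 3]$; to sharpen the range to $[0, 2]$ one invokes a Poitou--Tate type exact sequence for $\Acal_L[n]$ with local conditions, which simultaneously forces surjectivity of the degree-$2$ boundary map (killing $\Hrm^3$) and injectivity on the left (killing $\Hrm^{-1}$). Dualising and shifting then yields the first claim.

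To obtain the stated identification and long exact sequence assuming $\Sha(A/L)$ is finite, I analyse the pieces of the fibre in low degree. In degree $2$, flat Artin--Verdier duality yields $\Hrm^2_{\fl}(U_L, \Acal_{L,\tors})^\vee \cong A^t(L) \otimes \widehat\ZZ$ and local Tate duality yields $\Hrm^2_{\fl}(\Spec L_w, \Acal_{L, \tors})^\vee \cong A^t(L_w)$; dualising the top piece of the mapping-fibre sequence, the local and global contributions collapse to give $\Hrm^0(\whSC_{Z_L}) \cong A^t(L) \otimes \widehat\ZZ$. In the lower degrees, the local Kummer sequence together with the isomorphism $A(L_w)/\Acal_L^\circ(\mfr_w) \cong \Acal_L(k_w)$ and local Tate duality identify the quotient of $\Hrm^1_{\fl}(\Spec L_w, \Acal_{L,\tors})$ by the image of $\Acal_L^\circ(\mfr_w) \otimes^{\Lrm} \QQ/\ZZ$ with the Pontryagin dual of $\Acal_L(k_w)$, while globally $\Hrm^1_{\fl}(U_L, \Acal_{L, \tors})$ is described by $A(L) \otimes \QQ/\ZZ$ together with a Selmer-type quotient. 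Splicing these into the mapping-fibre long exact sequence, using finiteness of $\Sha(A/L)$ to recognise the Selmer piece as $\Sel_{\QQ/\ZZ}(A/L)$, and Pontryagin dualising produces the asserted five-term exact sequence.

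The main obstacle is the correct execution of flat duality for $p$-power torsion subgroup schemes in equal characteristic $p$, where \'etale cohomology is inadequate and one must work with the full flat site, using syntomic / Dieudonn\'e-type inputs in an essential way. All the required ingredients have been carried out in \cite[\S2.5]{KatoTrihan:BSD}, so my plan is essentially to transcribe their computation into the present notation and perform the final dualisation.
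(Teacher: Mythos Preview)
Your proposal is correct and follows the same approach as the paper: both reduce the statement to the computations carried out in \cite[\S2.3, \S2.5]{KatoTrihan:BSD} for the choice $V = (\Acal_L^\circ(\mfr_w))_{w\in Z_L}$, together with the identification $A(L_w)/\Acal_L^\circ(\mfr_w) \cong \Acal_L(k_w)$. The paper's proof is a terse two-line citation, while you have unpacked what lies behind that citation (the mapping-fibre long exact sequence, flat Artin--Verdier and local Tate duality, the Kummer sequence), but the substance is identical.
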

\begin{proof}
    Apply \cite[\S2.5, \S2.3]{KatoTrihan:BSD} to $V = (\Acal_L^\circ(\mfr_w))_{w\in Z_L}$, noting that $A(L_w)/\Acal_L^\circ(\mfr_w) \cong \Acal_L(\Ocal_w)/\Acal_L^\circ(\mfr_w) \cong \Acal_L(k_w)$.
\end{proof} 

By Schneider's result \cite[p~509]{Schneider:BSD}, we have a non-degenerate $G$-equivariant pairing
\begin{equation}\label{eq:NT-height}
    \langle\, ,\rangle_{A/L}\coloneqq(\log p)^{-1}\cdot\langle\, ,\rangle_{A/L,\NT}\colon A(L)\times A^t(L)\to \QQ,
\end{equation}
where $\langle\, ,\rangle_{A/L,\NT}$ is the N\'eron--Tate height pairing.
\begin{defn} 
    For any prime $\ell$ (allowing $\ell = p$), we write $h_{\ell}\colon A^t(L)\otimes\QQ_\ell \to \left(A(L)\otimes\QQ_\ell\right)^\ast$ for the $\QQ_\ell G$-isomorphism induced by $\langle\,,\rangle_{A/L}$. If in addition $\Sha(A/L)$ is finite, then we interpret $h_\ell$ as a $\QQ_\ell G$-isomorphism \[h_\ell\colon\xymatrix@1{\Hrm^0(\SC_{Z_L,\ell}\otimes_{\ZZ_\ell}\QQ_\ell)\ar[r]^-{\cong}& \Hrm^1(\SC_{Z_L,\ell}\otimes_{\ZZ_\ell}\QQ_\ell)}.\]
\end{defn}
Therefore, $h_\ell$ defines the $\QQ_\ell$-trivialisation in the sense of \eqref{eq:ht-pairing} if $\Sha(A/L)$ is finite \emph{and} $\SC_{Z_L,\ell}$ is a perfect $\ZZ_\ell G$-complex. The following proposition gives a sufficient condition for the $\ZZ_\ell G$-perfectness.
\begin{prop}\label{prop:c-t-SC} \begin{enumerate}
    \item\label{prop:c-t-SC:ell} If $\ell\ne p$ then we have $\SC_{Z_L,\ell}\in\Dperf(\ZZ_\ell G)$.
    \item\label{prop:c-t-SC:p} Suppose that $L/K$ is weakly ramified everywhere, and if $L/K$ is wildly ramified at $v\in Z$ then the natural map $\Acal^\circ\times_{X}\Spec\Ocal_w\to \Acal^\circ_L\times_{X_L}\Spec\Ocal_w$ is an isomorphism. (In other words, the condition for Corollary~\ref{cor:descent-Neron}(\ref{cor:descent-Neron:perf}) is valid.) Then we have $\whSC_{Z_L}(A,L/K)\in\Dperf(\widehat\ZZ G)$, and hence, $\SC_{Z_L,p}\in\Dperf(\ZZ_p G)$.
\end{enumerate} 
\end{prop}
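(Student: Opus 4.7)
The plan is to prove (\ref{prop:c-t-SC:ell}) and (\ref{prop:c-t-SC:p}) separately, since the cases $\ell\neq p$ and $\ell=p$ call on rather different tools. Following the general strategy of \cite[Proposition~3.7(i)]{BurnsKakdeKim:EquivBSDTame}, the unifying reduction is that a complex with bounded, finitely generated cohomology over the Noetherian semilocal ring $\ZZ_\ell G$ is perfect exactly when its derived base change to $\FF_\ell G$ is a perfect $\FF_\ell G$-complex. By Proposition~\ref{prop:SC} we already know $\whSC_{Z_L}$ has bounded, finitely generated cohomology concentrated in degrees $[0,2]$, so it is only the Tor-amplitude modulo $\ell$ that is at stake.

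For claim (\ref{prop:c-t-SC:ell}), I would exploit the fact that, away from $p$, everything becomes \'etale. Since $\Acal_L|_{U_L}$ is an abelian scheme, $\Acal_L[\ell^n]|_{U_L}$ is a finite \'etale group scheme, so $\RGamma_\fl(U_L,\Acal_L[\ell^n])$ coincides with \'etale cohomology of a constructible $\ZZ_\ell$-sheaf on a smooth curve over a finite field, which by standard finiteness theorems assembles into a perfect $\ZZ_\ell G$-complex (the $G$-equivariance being supplied by the cover $U_L\to U$). The local terms $\RGamma_\fl(\Spec L_w,\Acal_L[\ell^n])$ reduce to Galois cohomology of the $\ell$-adic Tate module of $A_{L_w}$, which is perfect over $\ZZ_\ell G_w$ and hence, after induction, over $\ZZ_\ell G$. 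Finally $\Acal^\circ_L(\mfr_w)\otimes^{\Lrm}\QQ/\ZZ$ has trivial $\ell$-primary part because $\Acal^\circ_L(\mfr_w)$ is pro-$p$ and $\ell\neq p$. Assembling these contributions via the mapping fibre \eqref{eq:arith-coho} and then Pontryagin-dualising gives $\SC_{Z_L,\ell}\in\Dperf(\ZZ_\ell G)$.

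For claim (\ref{prop:c-t-SC:p}), the main task is to identify (a shift of) the derived mod-$p$ reduction of $\whSC_{Z_L}$ with $\RGamma(X_L,\Lie(\Acal_L)(-Z_L))^\vee$. The natural bridge is the formal logarithm: for each $w\in Z_L$, the subgroup $\Acal_L^\circ(\mfr_w)$ is the $\mfr_w$-valued points of the formal group $\widehat\Acal_L^\circ$, whose reduction mod $p$ is controlled by $\Lie(\Acal_L)(\mfr_w)/p$; globally, the flat cohomology of $\Acal_L[p]$ on $U_L$ is connected via the syntomic/crystalline machinery of \cite{KatoTrihan:BSD} to the Hodge filtration and hence to $\Lie(\Acal_L)(-Z_L)$ after the support condition at $Z_L$ has been incorporated. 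The hypothesis of (\ref{prop:c-t-SC:p}) is precisely what triggers Corollary~\ref{cor:descent-Neron}(\ref{cor:descent-Neron:perf}), and, combined with Theorem~\ref{th:EquivRR}(\ref{th:EquivRR:perf}), it guarantees that $\RGamma(X_L,\Lie(\Acal_L)(-Z_L))$ is a perfect $\FF_p G$-complex concentrated in degrees $[0,1]$. Local $\ZZ_p G_w$-freeness of the relevant completed stalks of $\widehat\Acal_L^\circ$ then follows from the local integral normal basis theorem (Theorem~\ref{th:int-normal-basis}) together with Proposition~\ref{prop:Noether}, so the local contributions cause no obstruction to perfectness.

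The principal obstacle will lie in (\ref{prop:c-t-SC:p}): making the identification of the derived mod-$p$ reduction of $\whSC_{Z_L}$ with a shift of $\RGamma(X_L,\Lie(\Acal_L)(-Z_L))^\vee$ precise at the level of derived categories, rather than only on cohomology groups. This requires carefully unwinding the definition of $\RGamma_{\ari,Z_L}$ and tracking the local terms $\Acal^\circ_L(\mfr_w)$ through the formal logarithm, in the spirit of the syntomic argument of Kato--Trihan. Once the weakly ramified and descent hypotheses are fed into Theorem~\ref{th:int-normal-basis} and Corollary~\ref{cor:descent-Neron}(\ref{cor:descent-Neron:perf}), however, the global coherent cohomology and all local contributions are perfect simultaneously, so the remaining argument is a standard assembly.
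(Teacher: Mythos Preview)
Your treatment of (\ref{prop:c-t-SC:ell}) is plausible but more laborious than necessary, and the claimed $\ZZ_\ell G$-perfectness of each constituent (especially the local Galois cohomology terms) would still need justification. The paper takes a shortcut: by \cite[Proposition~3.7(i)]{BurnsKakdeKim:EquivBSDTame} there always exists \emph{some} family $V_L=(V_w)_{w\in Z_L}$ of $G_w$-stable open subgroups $V_w\subseteq\Acal^\circ_L(\mfr_w)$ for which $\whSC_{V_L}$ is already perfect over $\widehat\ZZ G$, and there is a distinguished triangle
\[
\whSC_{V_L}\to\whSC_{Z_L}\to\bigoplus_{w\in Z_L}\big(\Acal^\circ_L(\mfr_w)/V_w\big)^\vee[-1]\to(+1).
\]
Each quotient $\Acal^\circ_L(\mfr_w)/V_w$ is a finite $p$-group, so after $-\otimes_{\widehat\ZZ}\ZZ_\ell$ with $\ell\ne p$ the third term vanishes and $\SC_{Z_L,\ell}\simeq\whSC_{V_L}\otimes\ZZ_\ell\in\Dperf(\ZZ_\ell G)$ for free.

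For (\ref{prop:c-t-SC:p}) there is a genuine gap: the proposed identification of the derived mod-$p$ reduction of $\whSC_{Z_L}$ with (a shift of) $\RGamma(X_L,\Lie(\Acal_L)(-Z_L))^\vee$ is false. These two complexes carry entirely different information --- by Proposition~\ref{prop:SC} the cohomology of $\SC_{Z_L,p}$ encodes $A^t(L)$, the Selmer group and $\Sha(A/L)$, none of which is recoverable from the coherent cohomology of $\Lie(\Acal_L)(-Z_L)$ --- and indeed they appear as \emph{separate} terms in the equivariant BSD formula (Theorem~\ref{th:BKK}(\ref{th:BKK:p})). The syntomic machinery of \cite{KatoTrihan:BSD} relates the two objects through a distinguished triangle, not an isomorphism.

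The paper's argument for (\ref{prop:c-t-SC:p}) stays entirely local and never touches global coherent cohomology. Again invoking \cite[Proposition~3.7(i)]{BurnsKakdeKim:EquivBSDTame}, one only has to check that the specific choice $V_w=\Acal^\circ_L(\mfr_w)$ is cohomologically trivial for $G_w$ at every $w\in Z_L$. Via the formal-group filtration (as in the proof of \cite[Lemmas~6.1,~6.2]{KatoTrihan:BSD}) this is equivalent to cohomological triviality of $\Lie(\Acal_L)(\mfr_w)$. Under the stated hypothesis, at each wildly ramified $w$ one has $\Lie(\Acal_L)(\mfr_w)\cong\mfr_w^{\oplus\dim A}$ as a semilinear $G_w$-representation (\emph{cf.}~Corollary~\ref{cor:descent-Neron}(\ref{cor:descent-Neron:perf})), and K\"ock's local integral normal basis theorem (Theorem~\ref{th:int-normal-basis}) gives the required cohomological triviality; at tamely ramified $w$ it is automatic by Proposition~\ref{prop:Noether}. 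You correctly isolated this local ingredient in your last paragraph, but it is the whole proof, not a side condition.
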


Before we prove the proposition, let us make the following remark.

\begin{rmk}\label{rmk:c-t-SC}
    Proposition~\ref{prop:c-t-SC} is a special case of \cite[Proposition~3.7(i)]{BurnsKakdeKim:EquivBSDTame}, built upon the argument in \cite[\S6]{KatoTrihan:BSD}. To explain, by \cite[Proposition~3.4, Proposition~3.7(i)]{BurnsKakdeKim:EquivBSDTame} one constructs a \emph{perfect} $\widehat\ZZ G$-complex $\whSC_{V_L}$ using carefully chosen family of $G_w$-stable open compact subgroups $V_L\coloneqq (V_w\subset \Acal^\circ_L(\mfr_w))_{w\in Z_L}$, equipped with a distinguished triangle in $D(\widehat\ZZ G)$
    \[
    \xymatrix@1{ \whSC_{V_L} \ar[r] & \whSC_{Z_L} \ar[r] & \stackbin[w\in Z_L]{}{\bigoplus} \big(\Acal^\circ_L(\mfr_w)/V_w\big)^\vee[-1] \ar[r]& (+1)}.
    \]
    (In \cite[Proposition~3.7(i)]{BurnsKakdeKim:EquivBSDTame}, $\whSC_{V_L}$ is denoted as $\RGamma_{\ari,V_L}(U_L,\Acal_{L,\tors})^\vee[-2]$.) As $\Acal^\circ_L(\mfr_w)/V_w$ is a $p$-group for each $w\in Z_L$, it easily follows that the natural map $\whSC_{V_L}\otimes_{\widehat\ZZ}\ZZ_\ell\to \SC_{Z_L,\ell}$ is a quasi-isomorphism, proving Proposition~\ref{prop:c-t-SC}(\ref{prop:c-t-SC:ell}). To prove Proposition~\ref{prop:c-t-SC}(\ref{prop:c-t-SC:p}), we have to show that the choice $V_L=(\Acal^\circ_L(\mfr_w))_{w\in Z_L}$ makes $\whSC_{V_L}$ a perfect $\widehat\ZZ G$-complex under the additional assumption in the statement, which we explain in the proof.

    Without the additional assumption in Proposition~\ref{prop:c-t-SC}(\ref{prop:c-t-SC:p}), the choice of $V_L$ that make $\whSC_{V_L}$ perfect is quite \emph{inexplicit} and hard to work with. Therefore, we state Proposition~\ref{prop:c-t-SC} in a restrictive setting where there is a preferred explicit choice of $V_L$. See Remark~\ref{rmk:necessity-weak-ram} for further discussions.
\end{rmk}

\begin{proof}[Proof of Proposition~\ref{prop:c-t-SC}]
    Claim~(\ref{prop:c-t-SC:ell}) is already proven in Remark~\ref{rmk:c-t-SC}, so let us prove claim~(\ref{prop:c-t-SC:p}) in the setting of (\ref{prop:c-t-SC:p}).
    By the proof of Proposition~3.7(i) in \cite{BurnsKakdeKim:EquivBSDTame}, we need to show that the continuous $G_w$-action on $\Acal^\circ_L(\mfr_w)$ is cohomologically trivial, which is equivalent to the cohomological triviality of  $\Lie(\Acal_L)(\mfr_w)$ for $G_w$ by the proof of Lemma~6.1 and Lemma~6.2 in \cite{KatoTrihan:BSD}. (See also the proof of Lemma~3.4 in \cite{BurnsKakdeKim:EquivBSDTame}.) But by  Corollary~\ref{cor:descent-Neron}(\ref{cor:descent-Neron:perf}), we have $G_w$-equivariant isomorphisms of $\Ocal_w$-modules
    \[\Lie(\Acal_L)(\mfr_w) \cong \Lie(\Acal)(\Ocal_v)\otimes_{\Ocal_v}\mfr_w \cong \mfr_w^{\oplus \dim(A)},\]
    for any $w\in Z_L$ where $L/K$ is weakly and wildly ramified, so the desired cohomological triviality follows from K\"ock's local integral normal basis theorem (\emph{cf.} Theorem~\ref{th:int-normal-basis}).
%
\end{proof}

\begin{notn}\label{not:psi}
Let $\Ir(G)$ denote the set of isomorphism classes of (complex) irreducible $G$-representations, and choose a number field $E\subset \CC$ over which any $\psi\in\Ir(G)$ is defined. For each $\psi\in\Ir(G)$, let $V_\psi$ be the corresponding $EG$-module, and choose a $G$-stable $\Ocal_{E}$-lattice $T_\psi\subset V_\psi$. For any place $\lambda$ of $E$, we write 
\[
    V_{\psi,\lambda}\coloneqq V_\psi\otimes_{E} E_{\lambda} \quad \& \quad T_{\psi,\lambda}\coloneqq T_\psi\otimes_{\Ocal_E}\Ocal_{E,\lambda}.
\]
We obviously extend the above notation for any $G$-representation $\psi$.
\end{notn}

For any $G$-representation $\psi$, let $L_U(A,\psi,s)$ be the Hasse--Weil--Artin $L$-series for $(A,\psi)$ without Euler factors away from $U$;\footnote{Sometimes it can be convenient to allow $\psi$ to be reducible as we do (such as the regular representation), which is harmless as we have $L_U(A,\psi'\oplus\psi',s) = L_U(A,\psi',s)\cdot L_U(A,\psi'',s)$.} i.e., choosing a place $\lambda\mid\ell$ of $E$ with $\ell\ne p$ we have
\begin{equation}\label{eq:L-ftn}
    L_U(A,\psi,s) \coloneqq\prod_{v\in|U|} \det{}_{E_\lambda}\left(1 - |k_v|^{1-s}\Frob_v\mid V_\ell(A)\otimes_{\QQ_\ell} V_{\psi,\lambda}\right)^{-1},
\end{equation}
where $\Frob_v$ is the \emph{geometric} Frobenius at $v$.
It is sometimes useful to apply the change of variable $t=p^{-s}$ and set $Z_U(A,\psi,t)=L_U(A,\psi,s)$. 

Recall that by the Lefschetz trace formula we have
\[Z_U(A,\psi,t) = \prod_{i=0}^2 \det{}_{E_\lambda}\left(1-pt\cdot\Frob_p \mid \Hrm^i_{\et,c} (U\times_{\Spec\FF_p}\Spec\overline\FF_p, V_\ell(A)\otimes_{\QQ_\ell} V_{\psi,\lambda})\right)^{(-1)^{i+1}},
\]
where $\Frob_p$ is the \emph{geometric} $p$-Frobenius. Moreover,  we have $Z_U(A,\psi,t)\in E(t)$ that is independent of the choice of $\lambda$, and there is an analogous formula for $\lambda\mid p$ recovering $Z_U(A,\psi,t)$ via rigid cohomology. (For more details, see  \cite[Theorem~8.2]{BurnsKakdeKim:EquivBSDTame}.) 

For any $G$-representation $\psi$, set 
\begin{align}\label{eq:rk-LT}
    r_{\an}(\psi) &\coloneqq \ord_{s=1}L_U(A,\psi,s) = \ord_{t=p^{-1}}Z_U(A,\psi, t)\qquad\text{and}\\
    \Lscr_U(A,\psi)&\coloneqq \frac{L^\ast_U(A,\psi,1)}{(\log p)^{r_{\an}(\psi)}} = \lim_{t\to p^{-1}}\frac{Z_U(A,\psi,t)}{(1-pt)^{r_{\an}(\psi)}} \in E^\times.\notag
\end{align}

We recall the following result; \emph{cf.} \cite[Proposition~5.6]{BurnsKakdeKim:EquivBSDTame}.
\begin{prop}\label{prop:equiv-L-value}
\begin{enumerate}
    \item \label{lem:equiv-L-value:L-series}
    For any field automorphism $\tau$ of $\CC$, we have 
    \[\tau(Z_U(A,\psi,t)) = Z_U(A,\tau\circ\psi,t)\quad \text{and} \quad \tau(\Lscr_U(A,\psi)) = \Lscr_U(A,\tau\circ\psi),\] 
    where we view $Z_U(A,\psi,t)\in \CC(t)$. 
    \item \label{lem:equiv-L-value:L-value}
    There exists an element $\Lscr_U(A,L/K) \in \Krm_1(\QQ G)$ interpolating $\Lscr_U(A,\psi)$'s in the following sense: for any $G$-representation $\psi$, we have
    \[\Nrd^\psi(\Lscr_U(A,L/K)) = \Lscr_U(A,\psi).\]
\end{enumerate}
\end{prop}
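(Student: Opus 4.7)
The plan is to establish part~(\ref{lem:equiv-L-value:L-series}) directly from the Euler product definition, and then deduce part~(\ref{lem:equiv-L-value:L-value}) from part~(\ref{lem:equiv-L-value:L-series}) and the Hasse--Schilling--Maass theorem, in the spirit of \cite[Proposition~5.6]{BurnsKakdeKim:EquivBSDTame}.

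For part~(\ref{lem:equiv-L-value:L-series}), I would work one local Euler factor at a time. Fix $v\in|U|$, $\lambda\mid\ell$ with $\ell\ne p$, and consider the polynomial $P_{v,\psi}(t) = \det_{E_\lambda}\bigl(1-|k_v|\cdot t\cdot\Frob_v \mid V_\ell(A)\otimes_{\QQ_\ell}V_{\psi,\lambda}\bigr)$. Its coefficients can be expressed, via Newton's identities, as polynomials over $\QQ$ in the power sums $\tr(\Frob_v^k\mid V_\ell(A))\cdot\psi(\Frob_v^k)$. The first factor is a rational integer independent of $\ell$ by the Weil conjectures, and the second lies in $E$, so the coefficients of $P_{v,\psi}(t)$ themselves lie in $E$ and are independent of $\ell$. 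Applying a field automorphism $\tau$ of $\CC$ to these coefficients replaces $\psi(\Frob_v^k)$ by $(\tau\circ\psi)(\Frob_v^k)$, giving $\tau P_{v,\psi}(t) = P_{v,\tau\circ\psi}(t)$. Taking the product over $v\in|U|$ yields $\tau(Z_U(A,\psi,t)) = Z_U(A,\tau\circ\psi,t)$ as elements of $\CC(t)$. Since $1-pt\in\QQ[t]$ is $\tau$-fixed, comparing the leading behaviour at $t=p^{-1}$ of the factorisation $Z_U(A,\psi,t) = (1-pt)^{r_{\an}(\psi)}h_\psi(t)$ with $h_\psi\in E(t)$ regular and non-vanishing at $t=p^{-1}$ then yields both $r_{\an}(\tau\circ\psi) = r_{\an}(\psi)$ and $\tau(\Lscr_U(A,\psi)) = \Lscr_U(A,\tau\circ\psi)$.

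For part~(\ref{lem:equiv-L-value:L-value}), I would view the tuple $(\Lscr_U(A,\psi))_{\psi\in\Ir(G)}$ as an element of $\zeta(\CC G)^\times\cong\prod_{\psi\in\Ir(G)}\CC^\times$. By part~(\ref{lem:equiv-L-value:L-series}) applied to all $\tau\in\Aut(\CC/\QQ)$, this tuple is Galois-stable, where the $\Aut(\CC/\QQ)$-action on $\zeta(\CC G)^\times$ permutes the $\psi$-components via $\psi\mapsto\tau\circ\psi$, so it descends to an element $\xi\in\zeta(\QQ G)^\times$. It remains to verify $\xi\in\Nrd(\Krm_1(\QQ G))$; by the Hasse--Schilling--Maass theorem, this reduces to showing that $\Lscr_U(A,\psi)$ is a \emph{positive} real number for every irreducible symplectic $\psi$.

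The expected main obstacle is precisely this positivity. Self-duality of a symplectic $\psi$ together with part~(\ref{lem:equiv-L-value:L-series}) applied to complex conjugation already forces $\Lscr_U(A,\psi)\in\RR^\times$, so only the sign remains. To pin it down I would use the functional equation of $L_U(A,\psi,s)$, whose global root number is $+1$ for a symplectic representation in the function field setting, and then express the leading term as the determinant of a non-degenerate pairing on a finite-dimensional real vector space, which is positive up to a square. This is essentially the argument carried out in \cite[Proposition~5.6]{BurnsKakdeKim:EquivBSDTame}, which I would transcribe with small normalisation adjustments accounting for our convention $\Lscr_U(A,\psi) = L^\ast_U(A,\psi,1)/(\log p)^{r_{\an}(\psi)}$.
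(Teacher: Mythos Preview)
Your overall strategy matches the paper's: both parts ultimately rest on \cite[Proposition~5.6]{BurnsKakdeKim:EquivBSDTame}, and the paper's own proof is little more than a pointer to that reference together with the observation that the tuple $(\Lscr_U(A,\psi))_{\psi\in\Ir(G)}$ lands in $\zeta(\QQ G)^\times\cap\Krm_1(\RR G)$. Your treatment of part~(\ref{lem:equiv-L-value:L-series}) via Newton's identities on each Euler factor is a correct and more explicit version of what the paper cites.

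One caution on your positivity sketch in part~(\ref{lem:equiv-L-value:L-value}). The heuristic you offer---root number $+1$ for symplectic $\psi$, then ``determinant of a non-degenerate pairing on a real vector space''---does not track the actual mechanism. The relevant self-dual representation is $V_\ell(A)\otimes V_\psi$, which is \emph{orthogonal} (symplectic tensor symplectic), so the root-number-for-symplectic-representations statement is being applied to the wrong object; and in the function-field setting there is no archimedean regulator pairing to invoke. The genuine argument in \cite[Proposition~5.6]{BurnsKakdeKim:EquivBSDTame} proceeds differently (via the cohomological expression of $Z_U$ and weight considerations for the Frobenius eigenvalues). Since you explicitly say you would transcribe that argument, this is a defect of the gloss rather than of the proof plan, but you should not present the root-number-plus-pairing story as the reason positivity holds.
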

\begin{proof}
    Claim~(\ref{lem:equiv-L-value:L-series}) follows from Eq~(2) in the proof of Proposition~2.2 in \cite{BurnsKakdeKim:EquivBSDTame} (or alternatively, see the proof of Proposition~5.6 in \cite{BurnsKakdeKim:EquivBSDTame}). Identifying $\zeta(\CC G) \cong \prod_{\psi\in\Ir(G)} \CC$, we set 
    \[\Lscr_U(A,L/K) \coloneqq \big(\Lscr_U(A,\psi)\big)_{\psi\in\Ir(G)}\in \zeta(\CC G)^\times.\] 
    It follows from \cite[Proposition~5.6]{BurnsKakdeKim:EquivBSDTame} that $\Lscr_U(A,L/K)\in \Krm_1(\QQ G) = \zeta(\QQ G)^\times\cap\Krm_1(\RR G)$. This element clearly satisfies the interpolation property for any $\psi\in\Ig(G)$, hence for any $G$- representation $\psi$.
 \end{proof}

For $\psi\in\Ir(G)$, the \emph{algebraic $\psi$-rank} of $A$ is defined as follows:
\begin{equation}\label{eq:psi-MW-rk}
    r_{\alg}(\psi) \coloneqq \dim_E\Big(\Hom_{EG} (V_{\psi}, E\otimes A^t(L) )\Big) .
\end{equation}
Recall the following standard result.
\begin{thm}[{\emph{cf.} \cite{KatoTrihan:BSD}, \cite[Theorem~8.2]{BurnsKakdeKim:EquivBSDTame}}]
    The following are equivalent.
    \begin{enumerate}
        \item The $\ell_0$-primary part of $\Sha(A/L)$ is finite for some prime $\ell_0$. 
        \item $\Sha(A/L)$ is finite.
        \item We have $r_{\an}(\psi) = r_{\alg}(\psi)$ for any $\psi\in\Ir(G)$.
    \end{enumerate}
\end{thm}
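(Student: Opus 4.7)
The plan is to establish the three implications via the chain $(2) \Rightarrow (1)$ (trivial), $(1) \Rightarrow (2)$ (using the main theorem of \cite{KatoTrihan:BSD}), and $(2) \Leftrightarrow (3)$ (combining Artin formalism with a pointwise comparison, in the spirit of \cite[Theorem~8.2]{BurnsKakdeKim:EquivBSDTame}).

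For $(1) \Rightarrow (2)$, the key input is the function-field analogue of Schneider's formula: for every prime $\ell$ (proved in \cite{Schneider:BSD} for $\ell \neq p$ and extended to $\ell = p$ in \cite{KatoTrihan:BSD}),
\[
\ord_{s=1} L(A/L, s) = \rk A(L) + \corank_{\ZZ_\ell} \Sha(A/L)[\ell^\infty].
\]
Since the left-hand side is independent of $\ell$, so is the corank. Assuming $\Sha(A/L)[\ell_0^\infty]$ is finite forces this common corank to vanish, so $\Sha(A/L)[\ell^\infty]$ has trivial divisible part for every $\ell$ and is hence finite (being cofinitely generated as a $\ZZ_\ell$-module). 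A standard boundedness argument then upgrades this to finiteness of $\Sha(A/L)$ itself.

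For $(2) \Leftrightarrow (3)$, Artin formalism applies cleanly on $U$ (where $L/K$ is unramified and $A$ has good reduction) to give
\[
L_U(A \times_K L, s) = \prod_{\psi \in \Ir(G)} L_U(A, \psi, s)^{\dim \psi},
\]
while decomposing $\QQ \otimes_\ZZ A^t(L)$ as a $\QQ G$-module yields $\rk A(L) = \rk A^t(L) = \sum_{\psi} \dim\psi \cdot r_{\alg}(\psi)$. Comparing orders of vanishing at $s = 1$ and invoking the trivial-extension case ($G = 1$) of the equivalence $(2) \Leftrightarrow r_{\an}(A/L) = \rk A(L)$, finiteness of $\Sha(A/L)$ is equivalent to the weighted equality
\[
\sum_{\psi \in \Ir(G)} \dim\psi \cdot r_{\an}(\psi) = \sum_{\psi \in \Ir(G)} \dim\psi \cdot r_{\alg}(\psi).
\]
This already gives $(3) \Rightarrow (2)$.

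For the converse $(2) \Rightarrow (3)$, the essential ingredient is the individual inequality $r_{\an}(\psi) \geq r_{\alg}(\psi)$ for each $\psi \in \Ir(G)$; once this is granted, equality of the weighted sums forces term-by-term equality. I would establish the pointwise inequality by fixing a prime $\ell \nmid |G|$, at which $\ZZ_\ell G$ is sufficiently well-behaved (Maschke) that the Selmer complex $\SC_{Z_L, \ell}$ of Definition~\ref{def:SC} decomposes cleanly into $\psi$-isotypic summands. Using the description of $\Hrm^0$ and $\Hrm^1$ of $\SC_{Z_L, \ell}$ from Proposition~\ref{prop:SC} together with the $\ell$-adic Lefschetz-trace identification of $L_U(A, \psi, s)$, one identifies $r_{\an}(\psi) - r_{\alg}(\psi)$ with the $E_\lambda$-dimension of the $\psi$-isotypic component of $T_\ell \Sha(A/L) \otimes_{\ZZ_\ell} \QQ_\ell$ (for $\lambda \mid \ell$), which is manifestly non-negative and vanishes precisely under $(2)$. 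The main obstacle throughout is this pointwise comparison; once granted, the circle of equivalences closes via routine Artin formalism.
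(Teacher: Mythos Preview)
The paper does not supply its own proof of this theorem; it is simply recalled with citations to \cite{KatoTrihan:BSD} and \cite[Theorem~8.2]{BurnsKakdeKim:EquivBSDTame}, so your proposal goes beyond what the text offers. Your treatment of $(1)\Leftrightarrow(2)$ via the $\ell$-independence of $\corank_{\ZZ_\ell}\Sha(A/L)[\ell^\infty]$ is the standard argument and is correct, and your $(3)\Rightarrow(2)$ via Artin formalism and the $G=1$ case is fine.

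The sketch for $(2)\Rightarrow(3)$ has a real gap. You assert that $r_{\an}(\psi)-r_{\alg}(\psi)$ equals the $\psi$-multiplicity in $T_\ell\Sha(A/L)\otimes\QQ_\ell$, but you do not explain how $r_{\an}(\psi)$ is read off from $\SC_{Z_L,\ell}$. The Lefschetz trace formula gives $\ord_{t=p^{-1}}Z_U(A,\psi,t)$ as an alternating sum of dimensions of \emph{generalised} $1$-eigenspaces of $p\cdot\Frob_p$ on $\Hrm^i_{\et,c}(\overline U, V_\ell(A)\otimes V_{\psi,\lambda})$; to pass from generalised to genuine eigenspaces (and hence to the $\psi$-part of $A^t(L)\otimes\QQ_\ell$ and of $\Sha$) you need the semisimplicity of Frobenius at the eigenvalue~$1$. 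This is not formal: it is precisely the substantive content of \cite[\S3]{KatoTrihan:BSD} and of the equivariant statement \cite[Theorem~8.2]{BurnsKakdeKim:EquivBSDTame}, ultimately resting on the non-degeneracy of the height pairing \eqref{eq:NT-height}. A secondary issue is that your appeal to Proposition~\ref{prop:SC} for the cohomology of $\SC_{Z_L,\ell}$ is stated there \emph{under} the finiteness hypothesis on $\Sha$; for the unconditional inequality $r_{\an}(\psi)\geqslant r_{\alg}(\psi)$ you would need the more general description from \cite[\S2.5]{KatoTrihan:BSD}. Once these inputs are supplied the strategy does close, but as written you have deferred the genuinely hard step to the phrase ``one identifies'', which is exactly where the cited references do their work.
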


We conclude this section by recalling the following theorem from \cite{BurnsKakdeKim:EquivBSDTame}. For any prime $\ell$, we write $\partial^G_\ell\coloneqq \partial_{\ZZ_\ell G,\QQ_\ell}\colon \Krm_1(\QQ_\ell G)\to\Krm_0(\ZZ_\ell G,\QQ_\ell G)$ and   $\chi^G_\ell(C^\bullet,h)\coloneqq \chi_{\ZZ_\ell G,\QQ_\ell}(C^\bullet, h)\in \Krm_0(\ZZ_\ell G,\QQ_\ell G)$; \emph{cf.} \eqref{eq:conn-hom} and Def~\ref{def:Euler-Char-Rel-K0}.
\begin{thm}[{\emph{cf.}~\cite[Theorem~4.9]{BurnsKakdeKim:EquivBSDTame}}]\label{th:BKK}
    Suppose that the $\ell_0$-primary part of $\Sha(A/L)$ is finite for some prime $\ell_0$. 
    \begin{enumerate}
        \item\label{th:BKK:ell} For any prime $\ell\ne p$, the following formula 
    \[
    \partial^G_\ell\big(\Lscr_U(A,L/K)\big) - \chi^G_\ell\big(\SC_{Z_L,\ell},h_\ell\big) 
    \]
     defines a torsion element in $\Krm_0(\ZZ_\ell G,\QQ_\ell G)$.
     \item\label{th:BKK:p}   Under the assumption as in  Corollary~\ref{cor:descent-Neron}(\ref{cor:descent-Neron:perf}), the following formula
     \[  \partial^G_p\big(\Lscr_U(A,L/K)\big) - \chi^G_p\big(\SC_{Z_L,\ell},h_p\big) + \chi^G_p\big(\Lie(\Acal_L)(-Z_L)\big)  \]
    defines a torsion element in $\Krm_0(\ZZ_p G,\QQ_p G)$. Here, $\chi^G_p(\Ecal)$ for a $G$-equivariant vector bundle $\Ecal$ is defined in \eqref{eq:Euler-Char-Rel-K0-VB}. 
    \end{enumerate}
\end{thm}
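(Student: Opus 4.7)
The plan is to reduce the statement directly to \cite[Theorem~4.9]{BurnsKakdeKim:EquivBSDTame}, which establishes the equivariant refinement of the BSD conjecture modulo $\Krm_0$-torsion under a finiteness hypothesis on the Tate--Shafarevich group. What remains is essentially (i) to verify that all the relevant complexes are perfect, so that the Euler characteristic terms are defined in the expected relative $\Krm_0$-group, and (ii) to match the ingredients in our formulation with those of the reference.

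For part (\ref{th:BKK:ell}), Proposition~\ref{prop:c-t-SC}(\ref{prop:c-t-SC:ell}) gives $\SC_{Z_L,\ell}\in\Dperf(\ZZ_\ell G)$ unconditionally for $\ell\ne p$. The assumed finiteness of $\Sha(A/L)\{\ell_0\}$ implies, via the equivalence recalled before Theorem~\ref{th:BKK}, that $r_{\an}(\psi)=r_{\alg}(\psi)$ for every $\psi\in\Ir(G)$, so that $h_\ell$ is genuinely a $\QQ_\ell G$-isomorphism between the even and odd cohomology groups of $\SC_{Z_L,\ell}\otimes\QQ_\ell$ and the Euler characteristic $\chi^G_\ell(\SC_{Z_L,\ell},h_\ell)$ is well-defined. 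Since $\ell\ne p$, the coherent-cohomology correction is not needed, and the claim is precisely the $\ell$-component of the reference's main result after projecting to $\Krm_0(\ZZ_\ell G,\QQ_\ell G)$ and using that the target is a retract of the global relative $\Krm_0$-group up to torsion.

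For part (\ref{th:BKK:p}), at the residue characteristic $p$ the coherent-cohomology correction $\chi^G_p(\Lie(\Acal_L)(-Z_L))$ enters. The formulation in \cite{BurnsKakdeKim:EquivBSDTame} involves \emph{some} $G$-stable sub-bundle $\Lcal\subseteq\Lie(\Acal_L)(-Z_L)$ such that $\RGamma(X_L,\Lcal)\in\Dperf(\FF_p G)$. Under the weakly-ramified hypothesis and the compatibility of N\'eron models stated in the theorem, Corollary~\ref{cor:descent-Neron}(\ref{cor:descent-Neron:perf}) together with Theorem~\ref{th:EquivRR}(\ref{th:EquivRR:perf}) shows that one may take $\Lcal=\Lie(\Acal_L)(-Z_L)$ itself; and Proposition~\ref{prop:c-t-SC}(\ref{prop:c-t-SC:p}) gives $\SC_{Z_L,p}\in\Dperf(\ZZ_p G)$. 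Thus all Euler characteristic terms are well-defined in $\Krm_0(\ZZ_p G,\QQ_p G)$, and the asserted torsion statement is the $p$-component of \cite[Theorem~4.9]{BurnsKakdeKim:EquivBSDTame}.

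The main obstacle in writing out the proof is purely bookkeeping: one must verify that the normalisation conventions used here --- in particular the factor $(\log p)^{-1}$ built into the height pairing \eqref{eq:NT-height} and the matching factor $(\log p)^{r_{\an}(\psi)}$ in the definition \eqref{eq:rk-LT} of $\Lscr_U(A,\psi)$, as well as the sign convention for dualising $\RGamma$ in Definition~\ref{def:SC} --- are consistent with those adopted in \cite{BurnsKakdeKim:EquivBSDTame}, so that the equality holds on the nose (rather than up to an explicit, a priori non-trivial, correction). Once this is checked, no further conceptual input is required beyond what is already collected in \S\ref{sec:Neron} and \S\ref{sec:BKK}.
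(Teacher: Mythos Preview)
Your proposal is correct and takes essentially the same approach as the paper's own proof, namely reducing to \cite[Theorem~4.9, Proposition~5.6]{BurnsKakdeKim:EquivBSDTame} and verifying via Proposition~\ref{prop:c-t-SC} and Corollary~\ref{cor:descent-Neron}(\ref{cor:descent-Neron:perf}) that the relevant complexes are perfect so the Euler characteristics make sense. The paper is only slightly more explicit in two bookkeeping respects you left implicit: it records that the sign term $\chi^{\sgn}_G(A)$ appearing in \cite[Proposition~5.6]{BurnsKakdeKim:EquivBSDTame} is $2$-torsion by definition (hence absorbed into the torsion ambiguity), and it notes that the construction of $\Lcal_L$ in \cite[\S3.5]{BurnsKakdeKim:EquivBSDTame} was stated for $V_w\subseteq\Acal^\circ_{X_L}(\mfr_w)$ but extends verbatim to the choice $V_w=\Acal^\circ_L(\mfr_w)$ used here.
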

\begin{proof}
This theorem essentially follows from \cite[Theorem~4.9, Proposition~5.6]{BurnsKakdeKim:EquivBSDTame}, which we now explain in details. 
Using the notation of \cite[Proposition~5.6]{BurnsKakdeKim:EquivBSDTame}, the image of $\chi^{\BSD}_{G,\QQ}(A,V_L)$ in $\Krm_0(\ZZ_\ell G,\QQ_\ell G)$ is equal to $\chi^G_\ell\big(\SC_{Z_L,\ell},h_\ell\big)$ by Proposition~\ref{prop:c-t-SC} and Remark~\ref{rmk:c-t-SC}. (We allow $\ell = p$ under the addition assumption as in the statement.) The image of $\chi^{\coh}_G(A,V_L)$ in $\Krm_0(\ZZ_\ell G,\QQ_\ell G)$ is equal to $\chi^G_\ell\big(\Lie(\Acal_L)(-Z_L)\big)$ since the vector bundle $\Lcal_L$ attached to $V_L = (\Acal^\circ_L(\mfr_w))_{w\in Z_L}$ in \cite[\S3.5]{BurnsKakdeKim:EquivBSDTame} is exactly $\Lie(\Acal_L)(-Z_L)$. (In \emph{loc.~cit.} we assumed that $V_w\subseteq \Acal^\circ_{X_L}(\mfr_w)$ due to the way we construct cohomologically trivial $V_w$'s, but the same proof can be extended \emph{verbatim} to $V_w = \Acal_L^\circ(\mfr_w)$ and $\Lcal_L = \Lie(\Acal_L)(-Z_L)$.) Lastly,  $\chi^{\sgn}_G(A)$ is $2$-torsion by definition. By \cite[Theorem~4.9]{BurnsKakdeKim:EquivBSDTame}, the formula in \cite[Proposition~5.6(ii)]{BurnsKakdeKim:EquivBSDTame} holds up to torsion, and hence the theorem follows.
\end{proof}

We apply Theorem~\ref{th:BKK} to deduce a result on the normalised leading term $\Lscr_U(A,\psi)$. To state it, let us introduce some notation, which is a slight adaptation of \S\ref{sec:K-thy}.  Fix a place $\lambda\mid\ell$ of $E$ and a $G$-representation $\psi$. Set
\[
 \rho^\psi_\lambda \colon \xymatrix@1{\Krm_0(\ZZ_\ell G,\QQ_\ell G)\ar[r]& \Krm_0(\Ocal_{E,\lambda},E_\lambda) \ar[r]^-{v_\lambda}& \ZZ},
\]
where the first map is induced by $\Afr = \ZZ_\ell G \xrightarrow{\psi}\End_{\Ocal_{E,\lambda}}(T_{\psi,\lambda})$. 

We also introduce the following integers
\[
\chi^{\BSD}_{Z_L,\lambda}(A,\psi)\coloneqq \rho^\psi_\lambda\big(\chi^G_\ell(\SC_{Z_L,\ell},h_\ell) \big)\quad \&
\quad \chi^{\coh}_{Z_L,\lambda}(A,\psi)\coloneqq \rho^\psi_\lambda \big(\chi^G_{\ell}\big(\Lie(\Acal_L)(-Z_L)\big)\big),
\]
where we set $\chi^G_{\ell}\big(\Lie(\Acal_L)(-Z_L)\big)=0$ for $\ell\ne p$.

\begin{cor}\label{cor:BKK}
    Suppose that the $\ell_0$-primary part of $\Sha(A/L)$ is finite for some prime $\ell_0$. 
    Fix a place $\lambda \mid \ell$ of $E$, and let $\psi$ be a $G$-representation. Then we have
    \[
    v_\lambda\big(\Lscr_U(A,\psi)\big) = \chi^{\BSD}_{Z_L,\lambda}(A,\psi) -  \chi^{\coh}_{Z_L,\lambda}(A,\psi)
    \]
    if either $\ell \ne p$, or $\ell = p$ and the same assumption as in Corollary~\ref{cor:descent-Neron}(\ref{cor:descent-Neron:perf}) holds.
\end{cor}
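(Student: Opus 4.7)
The plan is to derive the identity by applying the map $\rho^\psi_\lambda$ to the relevant equality in $\Krm_0(\ZZ_\ell G,\QQ_\ell G)$ provided by Theorem~\ref{th:BKK}, and then to observe that the torsion ambiguity vanishes after landing in $\ZZ$.

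First, I would note that by Theorem~\ref{th:BKK}, the element
\[
\partial^G_\ell\bigl(\Lscr_U(A,L/K)\bigr) - \chi^G_\ell\bigl(\SC_{Z_L,\ell},h_\ell\bigr) + \chi^G_\ell\bigl(\Lie(\Acal_L)(-Z_L)\bigr)
\]
(where by convention the last term vanishes when $\ell\ne p$) is a torsion element of $\Krm_0(\ZZ_\ell G,\QQ_\ell G)$, under the stated hypotheses. Applying $\rho^\psi_\lambda$ to this element produces an element of $\ZZ$ (via $\Krm_0(\Ocal_{E,\lambda},E_\lambda)\xrightarrow{v_\lambda}\ZZ$). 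Since $\ZZ$ is torsion-free, the image of any torsion class under $\rho^\psi_\lambda$ must vanish; this is the key mechanism that eliminates the torsion ambiguity of Theorem~\ref{th:BKK}.

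Next, I would identify the image of the leading-term term under $\rho^\psi_\lambda$. By Lemma~\ref{lem:rho-psi}(\ref{lem:rho-psi:conn-hom}) applied with $\Afr = \ZZ_\ell G$ and $R' = \Ocal_{E,\lambda}$, we have the commutative diagram
\[
\rho^\psi_\lambda\circ \partial^G_\ell = v_\lambda\circ \Nrd^\psi.
\]
Combined with the interpolation property of Proposition~\ref{prop:equiv-L-value}(\ref{lem:equiv-L-value:L-value}), which asserts $\Nrd^\psi(\Lscr_U(A,L/K)) = \Lscr_U(A,\psi)$, this gives
\[
\rho^\psi_\lambda\bigl(\partial^G_\ell(\Lscr_U(A,L/K))\bigr) = v_\lambda\bigl(\Lscr_U(A,\psi)\bigr).
\]
The other two terms transform into $\chi^{\BSD}_{Z_L,\lambda}(A,\psi)$ and $\chi^{\coh}_{Z_L,\lambda}(A,\psi)$ by definition.

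Putting these ingredients together, the torsion vanishing gives the equality
\[
v_\lambda\bigl(\Lscr_U(A,\psi)\bigr) - \chi^{\BSD}_{Z_L,\lambda}(A,\psi) + \chi^{\coh}_{Z_L,\lambda}(A,\psi) = 0,
\]
which is the claimed formula. There is no real obstacle here: the entire argument is a formal deduction from Theorem~\ref{th:BKK} once one checks that $\rho^\psi_\lambda$ kills torsion (trivial, as its target is $\ZZ$) and that the commutative square of Lemma~\ref{lem:rho-psi}(\ref{lem:rho-psi:conn-hom}) combined with the interpolation property converts $\partial^G_\ell(\Lscr_U(A,L/K))$ into $v_\lambda(\Lscr_U(A,\psi))$.
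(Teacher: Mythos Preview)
Your proposal is correct and follows essentially the same approach as the paper: apply $\rho^\psi_\lambda$ to the formula in Theorem~\ref{th:BKK}, use Lemma~\ref{lem:rho-psi}(\ref{lem:rho-psi:conn-hom}) together with the interpolation property of $\Lscr_U(A,L/K)$ to identify the leading-term contribution, and observe that the torsion ambiguity vanishes in the torsion-free target $\ZZ$. The paper's proof is just a terser version of exactly this argument.
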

\begin{proof}
    By construction of $\Lscr_U(A,L/K)$ and Lemma~\ref{lem:rho-psi}(\ref{lem:rho-psi:conn-hom}), we have
    \[(\rho^{\psi}_\lambda\circ\partial^G_\ell)\big(\Lscr_U(A,L/K)\big) = v_\lambda\big(\Lscr_U(A,\psi)\big). \]
    The corollary can now be obtained by applying $\rho^{\psi}_\lambda$ to the formulae in Theorem~\ref{th:BKK}.
\end{proof}

\begin{rmk}\label{rmk:HS-spec}
In the above setting, $\chi^{\coh}_{Z_L,\lambda}(A,\psi)$ has already been computed in Corollary~\ref{cor:psi-part}(\ref{cor:psi-part:formula}), so to make $v_\lambda\big(\Lscr_U(A,\psi)\big)$ explicit, it remains to compute $ \chi^{\BSD}_{Z_L,\lambda}(A,\psi)$ in terms of \emph{arithmetic invariants} of $A/K$ (as integral Galois modules). This step turns out to be quite subtle, and in the next section we carry it out under certain \emph{simplifying assumptions} depending on $\ell$; \emph{cf.} Assumptions~\ref{ass:BC}. In particular, even though a more general version of Theorem~\ref{th:BKK}(\ref{th:BKK:p}) is obtained in \cite[Theorem~4.9, Proposition~5.6]{BurnsKakdeKim:EquivBSDTame} involving some inexplicit choice $V_L$ as in Remark~\ref{rmk:c-t-SC}, the resulting general formula for $v_\lambda\big(\Lscr_U(A,\psi)\big)$ seems difficult to make explicit. See Remark~\ref{rmk:necessity-weak-ram} for further discussions.
\end{rmk}

\section{The BSD-like formula for Hasse--Weil--Artin $L$-values}\label{sec:main}
Assuming the finiteness of $\Sha(A/L)$, we shall express $\chi^{\BSD}_{Z_L,\lambda}(A,\psi)$ in terms of the Galois module structure of $A(L)$, $A^t(L)$ and $\Sha(A/L)$ under a certain set of assumptions satisfied for almost all primes $\ell$ under $\lambda$, and thereby obtain the formula for $v_\lambda\big(\Lscr_U(A,\psi)\big)$. We also introduce a stronger assumption to handle $\ell = p$. We closely follow the proof in Burns--Castillo \cite[Proposition~7.3]{BurnsMaciasCastillo:EquivBSD}, which proves an analogous result over a number field.

\begin{notn}
\setcounter{equation}{\value{equation}-1}
\begin{subequations}
   For a $\ZZ_\ell G$-module $M$ we set
\begin{align}\label{eq:psi-part:coinv}
    [M]_{\psi,\lambda} &\coloneqq \Hom_{\Ocal_{E,\lambda}}(T_{\psi,\lambda},\Ocal_{E,\lambda}\otimes_{\ZZ_\ell} M)_G \cong T^\ast_{\psi,\lambda}\otimes_{\ZZ_\ell G}M, \qquad\text{and}\\
    [M]^\psi_\lambda &\coloneqq \Hom_{\Ocal_{E,\lambda}}(T_{\psi,\lambda},\Ocal_{E,\lambda}\otimes_{\ZZ_\ell} M)^G = 
    \Hom_{\Ocal_{E,\lambda}[G]}(T_{\psi,\lambda},\Ocal_{E,\lambda}\otimes_{\ZZ_\ell} M).
\end{align}
We extend these definitions to $\ZZ_\ell G$-complexes.
If $V$ is a $\QQ_\ell G$-module and $\psi\in\Ir(G)$ then we have $[V]_{\psi,\lambda}\cong [V]^\psi_\lambda$ and its $E_\lambda$-dimension is the multiplicity of $\psi$ in $V$. 

If $M$ is a finitely generated $\ZZ G$-module, then we abusively write $[M]_{\psi,\lambda}$ for $[M\otimes\ZZ_\ell]_{\psi,\lambda}$ and similarly define $[M]^\psi_\lambda$. 

Lastly, we set
\begin{equation}\label{eq:Sha-psi}
    \Sha^\vee_{\psi,\lambda}(A/L)\coloneqq \ker \Big([\Sel_{\QQ_\ell/\ZZ_\ell}(A/L)^\vee]_{\psi,\lambda} \twoheadrightarrow [(A(L)\otimes\QQ_\ell/\ZZ_\ell)^\vee]_{\psi,\lambda} \Big).
\end{equation}
To motivate the notation, note that by right exactness of $[-]_{\psi,\lambda}$ we have a natural right exact sequence
\[
\begin{tikzcd}[column sep=scriptsize]
    {[\Sha(A/L)^\vee]_{\psi,\lambda}} \arrow[r]& {[\Sel_{\QQ_\ell/\ZZ_\ell}(A/L)^\vee]_{\psi,\lambda}} \arrow[r] & {[(A(L)\otimes\QQ_\ell/\ZZ_\ell)^\vee]_{\psi,\lambda}}  \arrow[r] & 0
\end{tikzcd},
\]
and $ \Sha^\vee_{\psi,\lambda}(A/L)$ is the image of $[\Sha(A/L)^\vee]_{\psi,\lambda}$ in $[\Sel_{\QQ_\ell/\ZZ_\ell}(A/L)^\vee]_{\psi,\lambda}$. In particular, if $\ell$ is prime to $|G|$ then we have $[\Sha(A/L)^\vee]_{\psi,\lambda} \riso \Sha^\vee_{\psi,\lambda}(A/L)$. Note also that $\Sha^\vee_{\psi,\lambda}(A/L)$ is finite if $\Sha(A/L)$ is finite, in which case $\Sha^\vee_{\psi,\lambda}(A/L)$ is the torsion part of $[\Sel_{\QQ_\ell/\ZZ_\ell}(A/L)^\vee]_{\psi,\lambda}$.
\end{subequations}
\end{notn}

To compute $\chi^{\BSD}_{Z_L,\lambda}(A,\psi)$, we need to compute the cohomology of $[\SC_{Z_L,\ell}]_{\psi,\lambda}$ in terms of the arithmetic invariants of $A/L$, which naturally involves some Hochschield--Serre-type spectral sequence. To make the spectral sequence \emph{sufficiently degenerate}, we introduce the following conditions for $(A,L/K,Z_L)$ and a prime $\ell$.

\begin{ass}\label{ass:BC} 
For $(A,L/K,Z_L)$ and $\ell$ as above, suppose the following conditions hold.
\begin{enumerate}
    \item\label{ass:BC:MW} Neither $A(L)$ nor $A^t(L)$ have any non-trivial $\ell$ -torsion.
    \item\label{ass:BC:loc-vol} For any $w\in Z_L$, there is no non-trivial $\ell$-torsion in $\Acal_L(k_w)\cong A(L_w)/\Acal^\circ_L(\mfr_w)$. 
    \item\label{ass:BC:p} If $\ell = p$, then we assume that $L/K$ is weakly ramified everywhere, $A$ has semistable reduction at all places of $L$, and $L/K$ is tamely ramified at all places $v\in Z$ where $A$ does \emph{not} have semistable reduction. (Cf. Example~\ref{exa:descent-semistable}.)
\end{enumerate}
Condition~(\ref{ass:BC:loc-vol}) can be rephrased as $\ell$ dividing neither $|\Acal^\circ_L(k_w)|$ nor the local Tamagawa number $|A(L_w)/\Acal^\circ_L(\Ocal_w)|$.
\end{ass}

Given $(A,L/K,Z_L)$, Assumption~\ref{ass:BC} is clearly satisfied for all but finitely many primes $\ell$, but it is most interesting for $\ell = p$, especially when $p$ divides $[L:K]$. In \S\ref{sec:exa} we will present some non-trivial examples of $(A,L/K,Z_L)$ where Assumption~\ref{ass:BC} is satisfied for $\ell = p$ and $\Sha(A/L)$ is finite.

Condition~(\ref{ass:BC:p}) of Assumption~\ref{ass:BC} may look stronger than the assumption to ensure $\SC_{Z_L,p}\in\Dperf(\ZZ_p G)$ in Proposition~\ref{prop:c-t-SC}(\ref{prop:c-t-SC:p}), but the following lemma shows that these two conditions are equivalent under Assumption~\ref{ass:BC}(\ref{ass:BC:loc-vol}) for $\ell=p$.
\begin{lem}\label{lem:ass}
    \begin{enumerate}
        \item\label{lem:ass:sst} Assumption~\ref{ass:BC}(\ref{ass:BC:loc-vol}) for $\ell = p$ implies that $A_L$ has semistable reduction at all places of $L$.
        \item\label{lem:ass:torus}  For any torus $T$ over a finite field $k'$ of characteristic~$p$, the order of $T(k')$ is prime to $p$. In particular, if $A_L$ has totally toric degeneration at all places in $Z_L$, then Assumption~\ref{ass:BC}(\ref{ass:BC:loc-vol}) is satisfied for $\ell = p$ if $p$ does not divide $|A(L_w)/\Acal^\circ_L(\Ocal_w)|$ for any $w\in Z_L$.
    \end{enumerate}
\end{lem}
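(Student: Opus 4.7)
The two claims are proven separately via standard structure theory of smooth connected group schemes over finite fields.

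For claim~(\ref{lem:ass:sst}), the plan is to argue by contraposition. Places outside $Z_L$ are of good reduction by the choice of $Z$, so it suffices to treat $w\in Z_L$. If $A_L$ fails to be semistable at such $w$, then the Chevalley decomposition of the neutral component $\Acal^\circ_{L,k_w}$ produces a non-trivial smooth connected unipotent subgroup $U$ defined over the perfect field $k_w$. By the $k_w$-splitness of smooth connected commutative unipotent groups over perfect fields (already used in the paper via \cite[Corollary~B.2.7]{ConradGabberPrasad:PRedGp2} in the proof of Proposition~\ref{prop:Unip-Rad-Descent}), the group $U$ is an iterated extension of copies of $\Ga$ over $k_w$, so $|U(k_w)| = |k_w|^{\dim U}$ is a non-trivial power of $p$. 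The inclusions $U(k_w)\subseteq\Acal^\circ_L(k_w)\subseteq\Acal_L(k_w)$ then produce non-trivial $p$-torsion in $\Acal_L(k_w)$, contradicting Assumption~\ref{ass:BC}(\ref{ass:BC:loc-vol}) for $\ell=p$.

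For claim~(\ref{lem:ass:torus}), the first statement follows by choosing a finite extension $k''/k'$ over which $T$ splits: then $T(k'')\cong(k''^\times)^{\dim T}$ has order $(|k''|-1)^{\dim T}$, coprime to $p$, and since $T(k')\subseteq T(k'')$ is a subgroup, its order is also coprime to $p$. For the ``in particular'' statement, assume $A_L$ has totally toric degeneration at $w\in Z_L$, so that $\Acal^\circ_{L,k_w}$ is a torus over $k_w$; the torus claim then gives $|\Acal^\circ_L(k_w)|$ prime to $p$. Smoothness of $\Acal_L$ provides the natural isomorphism $A(L_w)/\Acal^\circ_L(\Ocal_w)\cong\Acal_L(k_w)/\Acal^\circ_L(k_w)$, so the hypothesis on the Tamagawa number gives $|\Acal_L(k_w)/\Acal^\circ_L(k_w)|$ prime to $p$ as well. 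Combining the two bounds verifies Assumption~\ref{ass:BC}(\ref{ass:BC:loc-vol}) for $\ell=p$.

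Both arguments are routine, with no significant obstacle; the essential input is the $k_w$-splitness of smooth connected unipotent groups over the perfect field $k_w$, which forces a non-trivial unipotent part in the special fibre of $\Acal^\circ_L$ to contribute non-trivial $p$-torsion to $\Acal_L(k_w)$, while the structure of $\Gm$ over finite fields trivially rules out any $p$-contribution from the toric part.
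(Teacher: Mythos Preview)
Your proof is correct. Claim~(\ref{lem:ass:sst}) follows the same line as the paper: both identify the unipotent radical of $\Acal^\circ_{L,k_w}$ as a vector group over the perfect field $k_w$ via \cite[Corollary~B.2.7]{ConradGabberPrasad:PRedGp2}, and observe that its $k_w$-points form a non-trivial $p$-group.

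For the torus assertion in claim~(\ref{lem:ass:torus}) you take a slightly different and more elementary route: you embed $T(k')$ into $T(k'')\cong(k''^\times)^{\dim T}$ for a splitting field $k''$, whereas the paper instead realises $T$ as a \emph{quotient} of a Weil restriction $S=\Res_{k''/k'}\Gm^d$ and invokes surjectivity of the Lang isogeny to see that $T(k')$ is a quotient of $S(k')=(k''^\times)^d$. Your argument is shorter and avoids Lang's theorem; the paper's quotient presentation is a touch more structural but relies on an external reference for the exact sequence. The ``in particular'' part is handled identically in both, using the rephrasing of Assumption~\ref{ass:BC}(\ref{ass:BC:loc-vol}) in terms of $|\Acal^\circ_L(k_w)|$ and the local Tamagawa number.
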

\begin{proof}
    Set $\Acal^\circ_{L,k_w}\coloneqq \Acal^\circ_L\times_{X_K}\Spec k_w$, which is a semi-abelian variety if and only if the unipotent radical $\Rscr_u(\Acal^\circ_{L,k_w})$ is trivial. Since any connected commutative unipotent algebraic group over a perfect field is a vector group (\emph{cf.} \cite[Corollary~B.2.7]{ConradGabberPrasad:PRedGp2}), $\Rscr_u(\Acal^\circ_{L,k_w})(k_w)$ is a non-trivial $p$-group whenever $\Rscr_u(\Acal^\circ_{L,k_w})$ is non-trivial. This shows Claim (\ref{lem:ass:sst}). 
    
    To prove (\ref{lem:ass:torus}), recall that for any $k'$-torus $T$ we have a short exact sequence of $k'$-tori
    \[
    1\to T'\to S \to T\to 1
    \]
    where $S = \Res_{k''/k'}\GG_m^d$ for some finite extension $k''/k'$. (This is a standard fact; see \cite[pp~8--9]{HamacherKim:Gisoc} for the proof.) It now follows that $T(k')$ is of prime-to-$p$ order since it is a quotient of $S(k') = (k''^\times)^d$ by surjectivity of the Lang isogeny. If $A_L$ has totally toric degeneration at $w\in Z_L$, then we just showed that $p \nmid |\Acal^\circ_{L}(k_w)|$ since $\Acal^\circ_{L,k_w}$ is a torus. 
\end{proof}

Let us now record the effect of Assumption~\ref{ass:BC} on the cohomology of $[\SC_{Z_L,\ell}]_{\psi,\lambda}$.
\begin{lem}\label{lem:SC-two-term}
    Suppose that the $\ell_0$-primary part of $\Sha(A/L)$ is finite for some $\ell_0$, and Assumption~\ref{ass:BC} is satisfied for $\ell$. 
    Then $\SC_{Z_L,\ell}$ can be represented by a two-term complex $[P^0\xrightarrow{d} P^1]$ of finitely generated projective $\ZZ_\ell G$-modules concentrated in degrees $[0,1]$. Furthermore, the following properties are valid for any $G$-representation $\psi$.
    \begin{enumerate}
        \item\label{lem:SC-two-term:0} $\Hrm^0([\SC_{Z_L,\ell}]_{\psi,\lambda}) \cong [A^t(L)]^\psi_\lambda$, which is torsion-free.
        \item\label{lem:SC-two-term:1} $\Hrm^1([\SC_{Z_L,\ell}]_{\psi,\lambda}) \cong [\Sel_{\QQ_\ell/\ZZ_\ell}(A/L)^\vee]_{\psi,\lambda}$,
         whose  torsion part and maximal torsion-free quotient are respectively $\Sha^\vee_{\psi,\lambda}(A/L)$ and $ [(A(L)\otimes\QQ_\ell/\ZZ_\ell)^\vee]_{\psi,\lambda}$; \emph{cf.} \eqref{eq:Sha-psi}.
    \end{enumerate}
\end{lem}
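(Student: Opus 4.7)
The plan is to first establish the structural claims for $\SC_{Z_L,\ell}$ (perfectness and two-term projective representation), then extract the cohomology of its $\psi$-twist by applying $T^\ast_{\psi,\lambda}\otimes_{\ZZ_\ell G}(-)$ to this explicit presentation.

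First I would verify that $\SC_{Z_L,\ell}\in\Dperf(\ZZ_\ell G)$: Proposition~\ref{prop:c-t-SC}(\ref{prop:c-t-SC:ell}) handles the case $\ell\ne p$, while for $\ell=p$ the conditions in Assumption~\ref{ass:BC}(\ref{ass:BC:p}) directly supply the hypotheses of Corollary~\ref{cor:descent-Neron}(\ref{cor:descent-Neron:perf}) and hence of Proposition~\ref{prop:c-t-SC}(\ref{prop:c-t-SC:p}). Next, I would combine Proposition~\ref{prop:SC} with the finiteness of $\Sha(A/L)$ to identify $\Hrm^0(\SC_{Z_L,\ell})$ with $A^t(L)\otimes\ZZ_\ell$ and to place $\Hrm^1(\SC_{Z_L,\ell})$ into a long exact sequence involving $\Sel_{\QQ_\ell/\ZZ_\ell}(A/L)^\vee$, the local terms $\bigoplus_{w\in Z_L}\Acal_L(k_w)^\vee\{\ell\}$, $A(L)_{\tors}^\vee\{\ell\}$, and $\Hrm^2(\SC_{Z_L,\ell})$. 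Under Assumption~\ref{ass:BC}(\ref{ass:BC:MW}) the $A(L)_{\tors}$ term vanishes and $A^t(L)\otimes\ZZ_\ell$ becomes torsion-free, while Assumption~\ref{ass:BC}(\ref{ass:BC:loc-vol}) kills each $\Acal_L(k_w)^\vee\{\ell\}$; the long exact sequence then collapses to $\Hrm^1(\SC_{Z_L,\ell})\cong \Sel_{\QQ_\ell/\ZZ_\ell}(A/L)^\vee$ and $\Hrm^2(\SC_{Z_L,\ell})=0$. A perfect complex with cohomology concentrated in $[0,1]$ is quasi-isomorphic to a two-term complex $[P^0\xrightarrow{d} P^1]$ of finitely generated projective $\ZZ_\ell G$-modules in degrees $[0,1]$, by the standard argument of iteratively splitting off the top term of a bounded projective resolution.

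With this presentation in hand, $[\SC_{Z_L,\ell}]_{\psi,\lambda}$ is quasi-isomorphic to the two-term complex $[T^\ast_{\psi,\lambda}\otimes_{\ZZ_\ell G} P^0\to T^\ast_{\psi,\lambda}\otimes_{\ZZ_\ell G} P^1]$. Claim~(\ref{lem:SC-two-term:1}) then follows from right-exactness of $T^\ast_{\psi,\lambda}\otimes_{\ZZ_\ell G}(-)$ applied to the exact sequence $P^0\to P^1\to \Sel_{\QQ_\ell/\ZZ_\ell}(A/L)^\vee\to 0$. The torsion/torsion-free decomposition of the resulting module is extracted by applying $[-]_{\psi,\lambda}$ to the Pontryagin dual of the Kummer-style exact sequence $0\to A(L)\otimes\QQ_\ell/\ZZ_\ell\to \Sel_{\QQ_\ell/\ZZ_\ell}(A/L)\to \Sha(A/L)\{\ell\}\to 0$ and comparing with the definition of $\Sha^\vee_{\psi,\lambda}(A/L)$ in~\eqref{eq:Sha-psi}.

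For claim~(\ref{lem:SC-two-term:0}), I would compute $\Hrm^0([\SC_{Z_L,\ell}]_{\psi,\lambda})=\ker$ using the two short exact sequences $0\to A^t(L)\otimes\ZZ_\ell\to P^0\to \im(d)\to 0$ and $0\to \im(d)\to P^1\to \Sel_{\QQ_\ell/\ZZ_\ell}(A/L)^\vee\to 0$; projectivity of $P^0$ and $P^1$ forces all higher $\Tor^{\ZZ_\ell G}_i(T^\ast_{\psi,\lambda},-)$ contributions to reduce to those of $A^t(L)\otimes\ZZ_\ell$ and of $\Sel^\vee$. The main obstacle will be the final identification of this kernel with $[A^t(L)]^\psi_\lambda$ (the invariant form) rather than with the naive coinvariant form $[A^t(L)]_{\psi,\lambda}$, since these two modules differ in general by the $G$-norm map on Tate cohomology. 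Following the strategy of \cite[Proposition~7.3(ii)]{BurnsMaciasCastillo:EquivBSD}, I expect to leverage the perfectness of $\SC_{Z_L,\ell}$ (which gives appropriate cohomological triviality of the resolution) together with the torsion-freeness established above to show that the relevant Tate-cohomological discrepancy vanishes at the $\Hrm^0$-level, yielding the claimed isomorphism and the torsion-freeness of $\Hrm^0([\SC_{Z_L,\ell}]_{\psi,\lambda})$.
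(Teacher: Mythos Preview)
Your setup and the proof of the two-term representation and of claim~(\ref{lem:SC-two-term:1}) are essentially the same as the paper's. The only substantive difference is in how you handle claim~(\ref{lem:SC-two-term:0}), and here your proposal is vaguer than it needs to be.

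You propose to chase $\Tor$-sequences coming from the two short exact sequences $0\to A^t(L)\otimes\ZZ_\ell\to P^0\to\im(d)\to 0$ and $0\to\im(d)\to P^1\to\Sel^\vee\to 0$, and you correctly identify that the resulting kernel is not obviously $[A^t(L)]^\psi_\lambda$ but differs from $[A^t(L)]_{\psi,\lambda}$ by something you hope to control via Tate cohomology. The paper short-circuits this entirely: since each $P^i$ is projective over $\ZZ_\ell G$, the module $T^\ast_{\psi,\lambda}\otimes_{\ZZ_\ell}P^i$ is projective over $\Ocal_{E,\lambda}G$ and hence cohomologically trivial, so the norm map $N_G\colon [P^i]_{\psi,\lambda}\to [P^i]^\psi_\lambda$ is an \emph{isomorphism} for $i=0,1$. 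This gives an isomorphism of two-term complexes $[P^\bullet]_{\psi,\lambda}\riso [P^\bullet]^\psi_\lambda$, and now the functor $[-]^\psi_\lambda = \Hom_{\Ocal_{E,\lambda}G}(T_{\psi,\lambda},-)$ is \emph{left}-exact, so $\Hrm^0([P^\bullet]^\psi_\lambda)=[A^t(L)\otimes\ZZ_\ell]^\psi_\lambda$ immediately. The point is to apply the norm comparison at the level of the projective terms $P^i$, where it is automatically an isomorphism, rather than at the level of $\Hrm^0$, where it is not. Your $\Tor$-chasing approach would eventually recover this, but only after unwinding extensions that the norm trick avoids from the start.
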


\begin{proof} \emph{(Compare with the proof of Proposition~7.3(ii) in \cite{BurnsMaciasCastillo:EquivBSD}.)}
    By Proposition~\ref{prop:SC} and Assumption~\ref{ass:BC} for $\ell$, we have $\Hrm^0(\SC_{Z_L,\ell}) \cong A^t(L)\otimes\ZZ_\ell$, which is torsion-free, and $\Hrm^i(\SC_{Z_L,\ell})=0$ for $i\ne 0,1$. The $\ZZ_\ell G$-perfectness (\emph{cf.} Prop.~\ref{prop:c-t-SC}) now implies that $\SC_{Z_L,\ell}$ can be represented by a two-term perfect $\ZZ_\ell G$-complex $[P^0\xrightarrow{d} P^1]$.

    As $T_{\psi,\lambda}^\ast\otimes_{\ZZ_\ell} P^i$ is also a cohomologically trivial $\Ocal_{E,\lambda}G$-module, the norm map induces a natural $\Ocal_{E,\lambda}$-linear isomorphism $N_G\colon [P^i]_{\psi,\lambda} \riso [P^i]^\psi_\lambda$ for $i=0,1$. Therefore, we have the following commutative diagram with exact rows
    \begin{equation}\label{eq:SC-two-term}
        \xymatrix@C=1.5em{
        &&[P^0]_{\psi,\lambda} \ar[rr]^-{[d]_{\psi,\lambda}} \ar[d]_-{N_G}^-{\cong} & &  [P^1]_{\psi,\lambda} \ar[r] \ar[d]_-{N_G}^-{\cong}& [\Hrm^1]_{\psi,\lambda} \ar[r] &0\\
        0\ar[r]& [\Hrm^0]^\psi_\lambda\ar[r]& [P^0]^\psi_\lambda\ar[rr]_-{[d]^\psi_\lambda} & &[P^1]^\psi_\lambda& & & &
        },
    \end{equation}
    where $\Hrm^i\coloneqq \Hrm^i(\SC_{Z_L,\ell})$. Now the lemma follows, noting that $[(A(L)\otimes\QQ_\ell/\ZZ_\ell)^\vee]_{\psi,\lambda} \cong \big([A(L)\otimes\ZZ_\ell]^{\check\psi}_\lambda\big)^\ast$ is torsion-free.
\end{proof}

\begin{rmk}\label{rmk:norm-triv}
    For any $E_\lambda G$-module $V_\lambda$ and $\psi\in \Ir(G)$, we have an isomorphism
\begin{equation}\label{eq:isom-inv-coinv}
    \begin{tikzcd}
         (V_{\psi,\lambda}^\ast\otimes_{E_\lambda}V_{\lambda})^G  \arrow[hook]{r} \arrow[bend left=15,"\cong"]{rr} &  V_{\psi,\lambda}^\ast\otimes_{E_\lambda}V_{\lambda} \arrow[two heads]{r} & (V_{\psi,\lambda}^\ast\otimes_{E_\lambda}V_{\lambda})_G,
   \end{tikzcd}
\end{equation}   
where the first map is the natural inclusion and the second the natural projection. Then the norm map $N_G\colon V_\lambda\to V_\lambda$ induces 
    \[
    \begin{tikzcd}
        (V_{\psi,\lambda}^\ast\otimes_{E_\lambda}V_{\lambda})_G \arrow["N_G"']{dr} & & \arrow["\cong", "\text{\eqref{eq:isom-inv-coinv}}"']{ll}   (V_{\psi,\lambda}^\ast\otimes_{E_\lambda}V_{\lambda})^G  \arrow["|G|"]{dl}\\
       &  (V_{\psi,\lambda}^\ast\otimes_{E_\lambda}V_{\lambda})^G , & 
    \end{tikzcd}
    \]
    where the right diagonal map is multiplication by $|G|$.

    Applying this observation to $V_\lambda = \Hrm^i(\SC_{Z_L,\ell})\otimes_{\ZZ_\ell}E_\lambda$, we obtain the following commutative diagram
    \begin{equation}\label{eq:norm-triv}
            \begin{tikzcd}
       \Hrm^i([\SC_{Z_L,\ell}]_{\psi,\lambda}) \arrow["\cong"',"N_G"]{d} \arrow[hook]{r} &  
       \Hrm^i([\SC_{Z_L,\ell}]_{\psi,\lambda})\otimes E_\lambda   \arrow["N_G","\cong"']{d} \arrow["\cong"', "\text{\eqref{eq:isom-inv-coinv}}"]{r} &  \Hrm^i([\SC_{Z_L,\ell}]^\psi_\lambda) \otimes E_\lambda \arrow["|G|"]{dl}\\
        \Hrm^i([\SC_{Z_L,\ell}]^\psi_\lambda) \arrow[hook]{r} &  
         \Hrm^i([\SC_{Z_L,\ell}]^\psi_\lambda) \otimes E_\lambda. & 
    \end{tikzcd}
    \end{equation}
    where the left vertical isomorphism is induced by the isomorphism
    \[
    \begin{tikzcd}
        {[\SC_{Z_L,\ell}]}_{\psi,\lambda} \arrow["\cong"', "N_G"]{r} & {[\SC_{Z_L,\ell}]}^\psi_\lambda
    \end{tikzcd}
    \]
     given by \eqref{eq:SC-two-term}. For $i=0$ the left horizontal arrow in \eqref{eq:norm-triv} coincides with the isomorphism 
     \[N_G\colon 
     \begin{tikzcd}
         \Hrm^0([\SC_{Z_L,\ell}]_{\psi,\lambda}) \arrow["\cong"]{r} & {[A^t(L)]}^\psi_\lambda
     \end{tikzcd}
     \]
     in Lemma~\ref{lem:SC-two-term}(\ref{lem:SC-two-term:0}).
     We use this observation in the computation of $\chi^{\BSD}_{Z_L,\lambda}(A,\psi)$; \emph{cf.} Proposition~\ref{prop:BC}. 
\end{rmk}

We now introduce the $\psi$-twisted regulator, following \cite[\S7.2.2]{BurnsMaciasCastillo:EquivBSD}.
\begin{defn}\label{def:psi-ht}
    We maintain the setting of Lemma~\ref{lem:SC-two-term}, and fix a place $\lambda\mid\ell$ of $E$. Given $\psi\in\Ir(G)$, choose $\Ocal_{E,\lambda}$-bases $(e_i)_{i = 1,\cdots,r_{\alg}(\psi)}$ of $[A(L)]^{\check\psi}_\lambda$, and $(\check e_j)_{j = 1,\cdots,r_{\alg}(\psi)}$ $[A^t(L)]^{\psi}_\lambda$, respectively. (We refer to \S\ref{not:psi} for the abuse of notation $[M]_{\psi,\lambda}$ and $[M]^\psi_\lambda$ when $M$ is a finitely generated $\ZZ G$-module.)
    
    we define the \emph{$\psi$-twisted regulator} to be 
    \[\Reg^\psi_\lambda\coloneqq \det\left( \langle e_i,\check  e_j \rangle_{A/L}\right).\]
    Note that $\Reg^\psi_\lambda$ is independent of the choice of $\Ocal_{E,\lambda}$-bases only up to $\Ocal_{E,\lambda}^\times$-multiple, so $v_\lambda(\Reg^\psi_\lambda)$ is a well-defined integer.

    Consider an $E_\lambda$-linear isomorphism  
    \[h^\psi\coloneqq [A^t(L)]^\psi_\lambda \otimes E_\lambda \riso \big([A(L)]^{\check\psi}_\lambda\big)^\ast \otimes E_\lambda \cong [A(L)^\ast]_{\psi,\lambda}\otimes E_\lambda\]
    by sending $\check e_j$ to the functional $\langle -,\check e_j\rangle_{A/L}$. If the $\ell$-primary part of $\Sha(A/L)$ is finite, then we can interpret $h^\psi$ as an $E_\lambda$-trivialisation of $[\SC_{Z_L,\ell}]_{\psi,\lambda}$ by Lemma~\ref{lem:SC-two-term}.
\end{defn}

\begin{subequations}
\setcounter{equation}{\value{equation}-1}
\begin{prop}\label{prop:BC}
    Suppose that the $\ell_0$-primary part of $\Sha(A/L)$ is finite for some $\ell_0$, and fix a place $\lambda$ of $E$ above a prime number $\ell$ that satisfies Assumption~\ref{ass:BC}. Then for any  $\psi\in\Ir(G)$, we have 
    \[
        \chi^{\BSD}_{Z_L,\lambda}(A,\psi) = v_\lambda\big(\Reg^\psi_\lambda/|G|^{r_{\alg}(\psi)}\big)+\lth_{\Ocal_{E,\lambda}}\big(\Sha^\vee_{\psi,\lambda}(A/L)\big).
    \]
\end{prop}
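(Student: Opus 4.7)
The plan is to compute $\chi^{\BSD}_{Z_L,\lambda}(A,\psi)$ directly, following closely the analogous number-field argument of \cite[Proposition~7.3(ii)]{BurnsMaciasCastillo:EquivBSD}. By the functoriality of $\rho^\psi_\lambda$ applied to Euler characteristics, we immediately reduce to
\[
\chi^{\BSD}_{Z_L,\lambda}(A,\psi) = v_\lambda\Big(\chi_{\Ocal_{E,\lambda}, E_\lambda}\big([\SC_{Z_L,\ell}]_{\psi,\lambda},\, [h_\ell]_{\psi,\lambda}\big)\Big).
\]
By Lemma~\ref{lem:SC-two-term}, $[\SC_{Z_L,\ell}]_{\psi,\lambda}$ is represented by a two-term complex of finitely generated free $\Ocal_{E,\lambda}$-modules whose cohomology, identified via the $N_G$-isomorphisms of diagram~\eqref{eq:SC-two-term}, is $\Hrm^0\cong [A^t(L)]^\psi_\lambda$ (torsion-free of rank $r\coloneqq r_{\alg}(\psi)$) and $\Hrm^1\cong [\Sel_{\QQ_\ell/\ZZ_\ell}(A/L)^\vee]_{\psi,\lambda}$, whose torsion submodule is $\Sha^\vee_{\psi,\lambda}$ and whose torsion-free quotient is $\big([A(L)]^{\check\psi}_\lambda\big)^*$.

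Next, I would apply the additivity of the Euler characteristic along the distinguished triangle
\[
\Sha^\vee_{\psi,\lambda}[-1] \lto [\SC_{Z_L,\ell}]_{\psi,\lambda} \lto \tilde C^\bullet
\]
induced by the torsion/torsion-free decomposition of $\Hrm^1$, where $\tilde C^\bullet$ is a perfect complex with only free cohomology $[A^t(L)]^\psi_\lambda$ in degree $0$ and $\big([A(L)]^{\check\psi}_\lambda\big)^*$ in degree $1$. The torsion contribution is $v_\lambda\big(\chi(\Sha^\vee_{\psi,\lambda}[-1],0)\big) = \lth_{\Ocal_{E,\lambda}}\big(\Sha^\vee_{\psi,\lambda}\big)$ by Example~\ref{exa:Euler-char-tor}. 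The torsion-free piece $v_\lambda(\chi(\tilde C^\bullet,(\,\cdot\,)))$ is then evaluated in the bases $\{\check e_j\}$ of $[A^t(L)]^\psi_\lambda$ and the dual basis $\{e_i^\vee\}$ of $\big([A(L)]^{\check\psi}_\lambda\big)^*$ from Definition~\ref{def:psi-ht}; in these bases the matrix of $h^\psi$ has determinant exactly $\Reg^\psi_\lambda$.

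The factor $|G|^{-r_{\alg}(\psi)}$ in the final formula is what requires the most care. It arises from the discrepancy between the coinvariant integral structure on which $[h_\ell]_{\psi,\lambda}$ naturally lives and the invariant integral structure in which $\Reg^\psi_\lambda$ is defined: by Remark~\ref{rmk:norm-triv}, on each cohomology group the norm isomorphism $N_G$ equals $|G|$ times the canonical semi-simple identification between coinvariants and invariants. Tracking this discrepancy across the $r$-dimensional trivialization on both degrees produces the factor $|G|^{-r}$ in the determinant, yielding $v_\lambda\big(\Reg^\psi_\lambda/|G|^{r_{\alg}(\psi)}\big)$ for the torsion-free contribution. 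Summing the two pieces gives the asserted formula. The main technical obstacle is precisely this last bookkeeping step---carrying out the basis computation while correctly extracting the $|G|^{r_{\alg}(\psi)}$ factor via Remark~\ref{rmk:norm-triv}.
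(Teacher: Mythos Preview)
Your approach is essentially the same as the paper's, with the direct-sum decomposition of the projective modules $[P^i]_{\psi,\lambda}$ in the paper replaced by an equivalent distinguished-triangle formulation. Both arguments reduce to the same computation: the torsion contribution $\lth_{\Ocal_{E,\lambda}}\big(\Sha^\vee_{\psi,\lambda}\big)$ via Example~\ref{exa:Euler-char-tor}, and the torsion-free contribution evaluated in the bases of Definition~\ref{def:psi-ht} using Remark~\ref{rmk:norm-triv}.

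One point deserves sharpening. Your explanation that the $|G|^{-r_{\alg}(\psi)}$ factor comes from tracking the norm discrepancy ``on both degrees'' is misleading: if the same $|G|$-discrepancy appeared symmetrically in degrees $0$ and $1$, the contributions would cancel. The asymmetry is that in degree~$1$ the integral structure $\Hrm^1_{\psi,\mathrm{tf}} \cong [(A(L)\otimes\QQ_\ell/\ZZ_\ell)^\vee]_{\psi,\lambda} \cong \big([A(L)]^{\check\psi}_\lambda\big)^\ast$ already agrees with the lattice used in $\Reg^\psi_\lambda$, so no factor arises there; the entire $|G|^{-r}$ comes from degree~$0$, where under the canonical identification \eqref{eq:isom-inv-coinv} the lattice $\Hrm^0_\psi$ sits as $|G|^{-1}\cdot[A^t(L)]^\psi_\lambda$ inside $[A^t(L)]^\psi_\lambda\otimes E_\lambda$. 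The paper makes this explicit via a commutative diagram relating $\tilde h_\psi$, $N_G$, and $h^\psi$, and then reads off $\big[\Hrm^0_\psi,\tilde h_\psi,\Hrm^1_{\psi,\mathrm{tf}}\big] = \big[[A^t(L)]^\psi_\lambda,\,|G|^{-1}h^\psi,\,[A(L)^\ast]_{\psi,\lambda}\big]$. You have correctly flagged this as the delicate step; carrying it out carefully requires precisely this degree-$0$ bookkeeping rather than a symmetric treatment of both degrees.
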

\begin{proof}
    Recall that $\chi^{\BSD}_{Z_L,\lambda}(A,\psi) = v_\lambda\big( \chi_{\Ocal_{E,\lambda},E_\lambda}([\SC_{Z_L,\ell}]_{\psi,\lambda},[h_\ell]_{\psi,\lambda}) \big)$. So we proceed by making explicit $[\SC_{Z_L,\ell}]_{\psi,\lambda}$ and $[h_\ell]_{\psi,\lambda}$.
    
    By Lemma~\ref{lem:SC-two-term} we represent $\SC_{Z_L,\ell} \cong [P^0\xrightarrow{d}P^1]$,
    and we have 
    \[[\SC_{Z_L,\ell}]_{\psi,\lambda}\cong \bigg[\xymatrix@1{[P^0]_{\psi,\lambda} \ar[r]^-{d_\psi} & [P^1]_{\psi,\lambda}}\bigg],\]
    where $d_\psi = [d]_{\psi,\lambda}$.     
    Write $\Hrm^i_\psi\coloneqq \Hrm^i([\SC_{Z_L,\ell}]_{\psi,\lambda})$, and set $\Hrm^{1}_{\psi,\tf}$ to be the maximal torsion-free quotient of $\Hrm^1_\psi$. Choose a decomposition
    \[
    [P^0]_{\psi,\lambda}  = \Hrm^0_\psi \oplus\, Q^0_\psi \quad \text{and} \quad [P^1]_{\psi,\lambda}  = \Hrm^1_\tf \oplus\, Q^1_\psi, 
    \]
    so that $d_\psi$ factorises as follows:
    \[d_\psi\colon
    \begin{tikzcd}
        {[P^0]}_{\psi,\lambda} \arrow[two heads]{r} & \, Q^0_\psi\,  \arrow["d_{Q_\psi}"]{r}& \,Q^1_\psi\, \arrow[hook]{r} & {[P^1]}_{\psi,\lambda},
    \end{tikzcd}\]
    where $d_Q$ is an injective map with $\coker(d_{Q_\psi})\cong \Sha^\vee_{\psi,\lambda}(A/L)$. Now, we can express $[h_\ell]_{\psi,\lambda}$ as follows
    \[
    [h_\ell]_{\psi,\lambda}\colon 
    \begin{tikzcd}
        \Hrm^0_{\psi,E_\lambda}\oplus\, Q^0_{\psi,E_\lambda} \arrow["{(\tilde h_\psi,d_{Q_\psi})}"]{rr} & &\Hrm^1_{\psi,E_\lambda}\oplus\, Q^1_{\psi,E_\lambda}
    \end{tikzcd}
    \]
    for some $E_\lambda$-isomorphism $\tilde h_\psi\colon \Hrm^0_{\psi,E_\lambda} \riso \Hrm^1_{\psi,E_\lambda}$, where the subscript $E_\lambda$ stands for the scalar extension. Therefore, we have
    \[
    \chi^{\BSD}_{Z_L,\lambda}(A,\psi) = [\Hrm^0_\psi, \tilde h_\psi, \Hrm^1_{\psi,\tf}] + [Q^0_\psi,d_{Q_\psi},Q^1_\psi].
    \]
    
    Recall that by Example~\ref{exa:Euler-char-tor} we have 
    \[v_\lambda([Q^0_\psi, d_{Q_\psi},Q^1_\psi]) = \lth_{\Ocal_{E,\lambda}}\big(\coker(d_{Q_\lambda})\big) = \lth_{\Ocal_{E,\lambda}}\big(\Sha^\vee_{\psi,\lambda}(A/L) \big),\] 
    so to prove the proposition it remains to compute $v_\lambda([\Hrm^0_\psi, \tilde h_\psi, \Hrm^1_{\psi,\tf}])$. By Lemma~\ref{lem:SC-two-term} and Remark~\ref{rmk:norm-triv}, we have the following commutative diagram of isomorphisms
    \[
    \begin{tikzcd}
       {[A^t(L)]}^\psi_\lambda \otimes E_\lambda \arrow["|G|"']{dr} 
       & \Hrm^0_{\psi, E_\lambda}   \arrow["N_G"]{d} \arrow["\text{\eqref{eq:isom-inv-coinv}}"']{l} \arrow["\tilde h_\psi"]{r} &  \Hrm^1_{\psi, E_\lambda} \arrow["\cong"]{d}\\
       &  {[A^t(L)]}^\psi_\lambda \otimes E_\lambda \arrow["h_\psi"]{r} &  {[A(L)^\ast]}_{\psi,\lambda}\otimes E_\lambda.
    \end{tikzcd}
    \]
    Since \eqref{eq:isom-inv-coinv} identifies $\Hrm^0_\psi$ with $|G|^{-1}\cdot [A^t(L)]_{\psi,\lambda}$ in $[A^t(L)]_{\psi,\lambda}\otimes E_\lambda$, we have
    \begin{align*}
        \big[\Hrm^0_\psi, \tilde h_\psi, \Hrm^1_{\psi,\tf}\big] 
        & = \big[ [A^t(L)]^\psi_\lambda,\, |G|^{-1}\cdot h_\psi,\, [A(L)^\ast]_{\psi,\lambda} \big] \\
        & = \big[ \Ocal_{E,\lambda},\, |G|^{-r_\alg(\psi)}\cdot \Reg^\psi_\lambda,\, \Ocal_{E,\lambda}\big],
    \end{align*}
    where the second equality uses the choice of $\Ocal_{E,\lambda}$-bases as in Def~\ref{def:psi-ht}.
\end{proof}    
\end{subequations}

For any finite torsion $\Ocal_{E,\lambda}$-module $M_\lambda$, we let $\Char_\lambda(M_\lambda)$ denote the characteristic ideal of $M_\lambda$. One can show that
\begin{equation}\label{eq:char-ideal}
    \Char_\lambda(M_\lambda) = \pfr_\lambda^{\lth_{\Ocal_{E,\lambda}}(M_\lambda)},
\end{equation}
where $\pfr_\lambda$ is the maximal ideal of $\Ocal_{E,\lambda}$.

Let us write $Z = Z_1\sqcup Z_2$ where $Z_2$ is the reduced complement of $U'$ defined in Proposition~\ref{prop:descent-conn-Neron-models}; i.e., $v\notin Z_2$ if and only if $\Acal^\circ_{X_L}$ and $\Acal^\circ_L$ are isomorphic at $v$. If $A$ has semistable reduction at all places of $L$ then $Z_2$ is the set of places of $K$ where $A$ has non-semistable reduction.  

We are now ready to state our main result.  
\begin{thm}\label{th:main} 
\setcounter{equation}{\value{equation}-1}
\begin{subequations}
  Fix a place $\lambda$ of $E$ above a prime number $\ell$, and suppose that the same assumption as in Corollary~\ref{cor:BKK} is valid (that is,  we assume that the $\ell_0$-primary part of $\Sha(A/L)$ is finite for some $\ell_0$, and if $\ell = p$ then we assume that $L/K$ is weakly ramified everywhere and tamely ramified over $Z_2$). Then for any  $\psi\in\Ir(G)$, we have the following equality of fractional ideals:
    \begin{equation}\label{eq:main:trivial}
        \Lscr_U(A,\psi)\cdot \Ocal_{E,\lambda} = 
        \left(\frac{\vol_{Z_1}(A/K)}{\prod_{v\in Z_2}\big|\Lie(\Acal_L)(k_{\tilde v})^{G_{\tilde v}} \big|} \right)^{\deg\psi}\cdot \lo_{Z_L}(A,\psi)\cdot \pfr_\lambda^{\chi^{\BSD}_{Z_L,\lambda}(A,\psi)}
    \end{equation}
    where we choose $\tilde v\in \pi^{-1}(v)$ for each $v\in Z_2$. Here, $\lo_{Z_L}(A,\psi),\vol_{Z_1}(A/K)\in p^\ZZ$ are respectively defined by 
    \begin{align}
        \log_p\big(\lo_{Z_L}(A,\psi)\big) &\coloneqq \ra_{\Lie(\Acal_L)(-Z_L)}(\psi)\quad\text{and} \label{eq:main:loc}\\
        \vol_{Z_1}(A/K)&\coloneqq \bmu\big(\Lie(A)(\AA_K)/\Lie(A)(K)\big)^{-1}\cdot \prod_{v\in Z_1}\bmu_v\big(\Acal^\circ(\mfr_v)\big) \label{eq:main:volume}
    \end{align}
    with respect to the Haar measure $\bmu_v$ on $\Lie(A)(K_v)$ and $\bmu\coloneqq\stackbin[v]{}{\prod}\bmu_v$ as in \cite[\S1.6, \S1.7]{KatoTrihan:BSD}.
    
    In particular, if Assumption~\ref{ass:BC} is valid for $\ell$ then we have
    \begin{multline}\label{eq:main:formula}
        \Lscr_U(A,\psi)\cdot \Ocal_{E,\lambda} = \\
        \left(\frac{\vol_{Z_1}(A/K)}{\prod_{v\in Z_2}\big|\Lie(\Acal_L)(k_{\tilde v})^{G_{\tilde v}} \big|} \right)^{\deg\psi}\cdot \lo_{Z_L}(A,\psi)\cdot \frac{\Reg^\psi_\lambda}{|G|^{r_{\alg}(\psi)}}
        \cdot\Char_{\lambda}(\Sha^\vee_{\psi,\lambda}(A/L)).
    \end{multline}
\end{subequations}
\end{thm}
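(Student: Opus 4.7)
The strategy is to combine Corollary~\ref{cor:BKK} -- which reduces $v_\lambda\big(\Lscr_U(A,\psi)\big)$ to $\chi^{\BSD}_{Z_L,\lambda}(A,\psi)-\chi^{\coh}_{Z_L,\lambda}(A,\psi)$ -- with the explicit computations of the $\psi$-part of the coherent Euler characteristic developed in \S\ref{sec:K-thy} and the analysis of N\'eron models of \S\ref{sec:Neron}. The formula \eqref{eq:main:formula} will then be deduced from \eqref{eq:main:trivial} by a direct application of Proposition~\ref{prop:BC}. Throughout I view the displayed identities as identities of $\lambda$-adic valuations (equivalently, of fractional ideals of $\Ocal_{E,\lambda}$).

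When $\ell\neq p$, the coherent term $\chi^{\coh}_{Z_L,\lambda}(A,\psi)$ vanishes by construction, while each of $\vol_{Z_1}(A/K)$, $|\Lie(\Acal_L)(k_{\tilde v})^{G_{\tilde v}}|$, and $\lo_{Z_L}(A,\psi)$ is a power of $p$ and hence a unit at $\lambda$. Thus \eqref{eq:main:trivial} reduces to the identity $v_\lambda(\Lscr_U(A,\psi))=\chi^{\BSD}_{Z_L,\lambda}(A,\psi)$ supplied by Corollary~\ref{cor:BKK}. So the real content lies in the case $\ell=p$.

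In that case Assumption on $L/K$ together with Corollary~\ref{cor:descent-Neron}(\ref{cor:descent-Neron:perf}) shows that Theorem~\ref{th:EquivRR} applies to $\Ecal\coloneqq \Lie(\Acal_L)(-Z_L)$, so Corollary~\ref{cor:psi-part}(\ref{cor:psi-part:formula}) yields
\[
-\chi^{\coh}_{Z_L,\lambda}(A,\psi)=v_\lambda(p)\Big([k:\FF_p]\cdot\deg\psi\cdot \chi_k(\Ecal^G)+\ra_{\Ecal}(\psi)\Big),
\]
using $\dim_{\bar k}(\overline T_\psi^\ast\otimes_k \chi_k(\Ecal^G))=\deg\psi\cdot\chi_k(\Ecal^G)$. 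The second summand contributes precisely $-v_\lambda(\lo_{Z_L}(A,\psi))$ by the definition \eqref{eq:main:loc}. For the first, Proposition~\ref{prop:descent-Neron} provides a short exact sequence of vector bundles on $X$
\[
0\to \Lie(\Acal_L)(-Z_L)^G \to \Lie(\Acal)(-Z_1) \to \bigoplus_{v\in Z_2}\Lie(\Acal_L)(k_{\tilde v})^{G_{\tilde v}}\to 0,
\]
so $\chi_k(\Ecal^G)=\chi_k(\Lie(\Acal)(-Z_1))-\sum_{v\in Z_2}[k_v:k]\dim_{k_v}\Lie(\Acal_L)(k_{\tilde v})^{G_{\tilde v}}$. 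The classical adelic Riemann--Roch computation underlying the function-field BSD formula (as in \cite[\S1.6--\S1.7]{KatoTrihan:BSD}) identifies $\vol_{Z_1}(A/K)$ with $q^{\chi_k(\Lie(\Acal)(-Z_1))}$, and $|\Lie(\Acal_L)(k_{\tilde v})^{G_{\tilde v}}|=p^{[k_v:\FF_p]\,\dim_{k_v}\Lie(\Acal_L)(k_{\tilde v})^{G_{\tilde v}}}$ by definition. Substituting these into the formula above yields
\[
\chi^{\coh}_{Z_L,\lambda}(A,\psi)=\deg\psi\cdot v_\lambda\!\left(\frac{\prod_{v\in Z_2}|\Lie(\Acal_L)(k_{\tilde v})^{G_{\tilde v}}|}{\vol_{Z_1}(A/K)}\right)-v_\lambda(\lo_{Z_L}(A,\psi)),
\]
and feeding this back into Corollary~\ref{cor:BKK} gives \eqref{eq:main:trivial}. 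Finally, assuming Assumption~\ref{ass:BC}, Proposition~\ref{prop:BC} identifies $\pfr_\lambda^{\chi^{\BSD}_{Z_L,\lambda}(A,\psi)}$ with $\frac{\Reg^\psi_\lambda}{|G|^{r_{\alg}(\psi)}}\cdot\Char_{\lambda}(\Sha^\vee_{\psi,\lambda}(A/L))$, giving \eqref{eq:main:formula}.

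The main technical obstacle is the careful bookkeeping of signs and measure normalisations in the volume identity $\vol_{Z_1}(A/K)=q^{\chi_k(\Lie(\Acal)(-Z_1))}$. This combines the formal-logarithm identification $\bmu_v(\Acal^\circ(\mfr_v))=q_v^{-\dim A}\bmu_v(\Lie(\Acal)(\Ocal_v))$ (valid since $\Acal^\circ_{\Ocal_v}$ has connected smooth special fibre) with the adelic Riemann--Roch relation: for the local measure normalised by $\bmu_v(\Lie(\Acal)(\Ocal_v))=1$, the covolume $\bmu(\Lie(A)(\AA_K)/\Lie(A)(K))$ equals $q^{-\chi_k(\Lie(\Acal))}$. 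Everything else is a combination of previously established ingredients.
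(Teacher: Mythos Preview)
Your argument is correct and follows the same route as the paper: apply Corollary~\ref{cor:BKK}, then compute $-\chi^{\coh}_{Z_L,\lambda}(A,\psi)$ via Corollary~\ref{cor:psi-part} combined with the degree computation of Proposition~\ref{prop:descent-Neron}/Corollary~\ref{cor:descent-Neron}(\ref{cor:descent-Neron:deg}) and the identification $\vol_{Z_1}(A/K)=q^{\chi_k(\Lie(\Acal)(-Z_1))}$ (which the paper simply cites from \cite[\S3.7]{KatoTrihan:BSD}), and finish with Proposition~\ref{prop:BC}. One cosmetic slip: in the sentence ``the second summand contributes precisely $-v_\lambda(\lo_{Z_L}(A,\psi))$'' the sign is wrong --- that summand equals $+v_\lambda(\lo_{Z_L}(A,\psi))$ on the right-hand side of your displayed formula for $-\chi^{\coh}$ --- but your subsequent displayed expression for $\chi^{\coh}_{Z_L,\lambda}(A,\psi)$ is correct, so the conclusion is unaffected.
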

\begin{proof}
    By Corollary~\ref{cor:BKK} we have $ \Lscr_U(A,\psi)\cdot \Ocal_{E,\lambda} = \pfr_\lambda^{\chi^{\BSD}_{Z_L,\lambda}(A,\psi)-\chi^{\coh}_{Z_L,\lambda}(A,\psi)}$. Since we have $\vol_{Z_1}(A/K) = \frac{|\Hrm^0(X,\Lie\Acal(-Z_1))|}{|\Hrm^1(X,\Lie\Acal(-Z_1))|}$ by \cite[\S3.7]{KatoTrihan:BSD}, it follows from Corollary~\ref{cor:psi-part} and Corollary~\ref{cor:descent-Neron}(\ref{cor:descent-Neron:deg}) that 
    \[-\chi^{\coh}_{Z_L,\lambda}\big(A,\psi\big)  = v_\lambda\left(\left(\frac{\vol_{Z_1}(A/K)}{\prod_{v\in Z_2}\big|\Lie(\Acal_L)(k_{\tilde v})^{G_{\tilde v}} \big|} \right)^{\deg\psi}\cdot \lo_{Z_L}(A,\psi)\right)\]
    for any place $\lambda$ of $E$ over $\ell$. If Assumption~\ref{ass:BC} is valid for $\ell$ then the formula \eqref{eq:main:formula}  immediately follows from the computation of $\chi^{\BSD}_{Z_L,\lambda}(A,\psi)$ in Proposition~\ref{prop:BC}.
\end{proof}

If $\ell$ does not divide $|G|$ then any $\ZZ_\ell G$-module is cohomologically trivial so the functor $[-]_{\psi,\lambda}$ is exact for any place $\lambda$ above $\ell$. In particular, we have 
\[
\Hrm^i\big([\SC_{Z_L,\ell}]_{\psi,\lambda}\big) \cong [\Hrm^i(\SC_{Z_L,\ell})]_{\psi,\lambda}\quad\text{and}\quad  \Sha^\vee_{\psi,\lambda}(A/L)\coloneqq [\Sha(A/L)^\vee]_{\psi,\lambda}.
\]
Therefore, we immediately obtain the following proposition even when Assumption~\ref{ass:BC} does not hold for $\ell$.
\begin{prop}\label{prop:main-c-t}
\setcounter{equation}{\value{equation}-1}
\begin{subequations}
   Suppose that the $\ell_0$-primary part of $\Sha(A/L)$ is finite for some $\ell_0$, and we fix a place $\lambda$ of $E$ over a prime $\ell$ not dividing $|G|$. Then for any  $\psi\in\Ir(G)$, we have 
    \begin{multline}\label{eq:main-c-t:BC}
    \chi^{\BSD}_{Z_L,\lambda}(A,\psi) = v_\lambda\big(\Reg^\psi_\lambda/|G|^{r_{\alg}(\psi)}\big)+\lth_{\Ocal_{E,\lambda}}\big(\Sha^\vee_{\psi,\lambda}(A/L)\big)  \\ - \lth_{\Ocal_{E,\lambda}}\Big([A(L)^\vee_{\tors}]_{\psi,\lambda}\Big)-\lth_{\Ocal_{E,\lambda}}\Big([A^t(L)_{\tors}]_{\psi,\lambda}\Big) \\+\sum_{v\in Z}\lth_{\Ocal_{E,\lambda}}\Big(\big[\bigoplus_{w \mid v}\Acal_L(k_w)^\vee\big]_{\psi,\lambda}\Big).
\end{multline}
    Furthermore, we have 
    \begin{multline}\label{eq:main-c-t:formula}
        \Lscr_U(A,\psi)\cdot \Ocal_{E,\lambda} = 
        \left(\frac{\vol_{Z_1}(A/K)}{\prod_{v\in Z_2}\big|\Lie(\Acal_L)(k_{\tilde v})^{G_{\tilde v}} \big|} \right)^{\deg\psi}\cdot \lo_{Z_L}(A,\psi)\cdot \frac{\Reg^\psi_\lambda}{|G|^{r_{\alg}(\psi)}}
        \\
        \times\frac{\Char_{\lambda}(\Sha^\vee_{\psi,\lambda}(A/L))\cdot\prod_{v\in Z}\Char_\lambda\Big(\big[\bigoplus_{w \mid v}\Acal_L(k_w)^\vee\big]_{\psi,\lambda}\Big)}{\Char_\lambda([A(L)^\vee_{\tors}]_{\psi,\lambda})\cdot\Char_\lambda([A^t(L)_{\tors}]_{\psi,\lambda})},
    \end{multline}
\end{subequations}
\end{prop}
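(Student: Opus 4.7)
The plan is to adapt the proof of Proposition~\ref{prop:BC} to the weaker hypothesis $\ell\nmid|G|$ in place of Assumption~\ref{ass:BC}. The decisive simplification is that $\ell\nmid|G|$ makes every $\ZZ_\ell G$-module cohomologically trivial for $G$, so the functor $[-]_{\psi,\lambda}$ is exact and one has $\Sha^\vee_{\psi,\lambda}(A/L)=[\Sha(A/L)^\vee]_{\psi,\lambda}$. Furthermore, Proposition~\ref{prop:c-t-SC}(\ref{prop:c-t-SC:ell}) directly gives $\SC_{Z_L,\ell}\in\Dperf(\ZZ_\ell G)$ for $\ell\ne p$; and in the remaining case $\ell=p$ the hypothesis $p\nmid|G|$ forces $L/K$ to be tamely ramified everywhere, so the condition of Corollary~\ref{cor:descent-Neron}(\ref{cor:descent-Neron:perf}) is automatically fulfilled and both Theorem~\ref{th:BKK}(\ref{th:BKK:p}) and Corollary~\ref{cor:BKK} apply without any extra assumption on the reduction of $A$.

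First I would apply the exact functor $[-]_{\psi,\lambda}$ to the four-term exact sequence of Proposition~\ref{prop:SC} (after $\otimes_{\widehat\ZZ}\ZZ_\ell$) to identify $\Hrm^0([\SC_{Z_L,\ell}]_{\psi,\lambda})\cong[A^t(L)]_{\psi,\lambda}$ and the torsion-free part of $\Hrm^1([\SC_{Z_L,\ell}]_{\psi,\lambda})$ as $[(A(L)\otimes\QQ_\ell/\ZZ_\ell)^\vee]_{\psi,\lambda}$, and to derive by additivity of length the identity
\begin{equation*}
\lth_{\Ocal_{E,\lambda}}\!\bigl(\Hrm^1([\SC_{Z_L,\ell}]_{\psi,\lambda})_{\tors}\bigr)-\lth_{\Ocal_{E,\lambda}}\!\bigl(\Hrm^2([\SC_{Z_L,\ell}]_{\psi,\lambda})\bigr)=\lth\bigl(\Sha^\vee_{\psi,\lambda}(A/L)\bigr)+\sum_{v\in Z}\lth\Bigl(\bigl[\bigoplus_{w\mid v}\!\Acal_L(k_w)^\vee\bigr]_{\psi,\lambda}\Bigr)-\lth\bigl([A(L)^\vee_{\tors}]_{\psi,\lambda}\bigr).
\end{equation*}
Combining this with the torsion contribution $-\lth([A^t(L)_{\tors}]_{\psi,\lambda})$ coming from $\Hrm^0$, the alternating sign convention $(-1)^{i+1}$ of Example~\ref{exa:Euler-char-tor}, and the regulator contribution $v_\lambda\bigl(\Reg^\psi_\lambda/|G|^{r_{\alg}(\psi)}\bigr)$ that matches the torsion-free parts of $\Hrm^0$ and $\Hrm^1$ under $h_\ell$ (the $|G|^{r_{\alg}(\psi)}$-denominator arising from the norm comparison of Remark~\ref{rmk:norm-triv} exactly as in the proof of Proposition~\ref{prop:BC}) yields formula~\eqref{eq:main-c-t:BC}. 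Formula~\eqref{eq:main-c-t:formula} then follows by inserting \eqref{eq:main-c-t:BC} into Corollary~\ref{cor:BKK}: for $\ell\ne p$ the coherent term $\chi^{\coh}_{Z_L,\lambda}(A,\psi)$ is zero by definition and the prefactor $\vol_{Z_1}(A/K)^{\deg\psi}\cdot\bigl|\Lie(\Acal_L)(k_{\tilde v})^{G_{\tilde v}}\bigr|^{-\deg\psi}\cdot\lo_{Z_L}(A,\psi)$ is a $p$-power, hence a $\lambda$-adic unit; for $\ell=p$ with $p\nmid|G|$ the computation of $-\chi^{\coh}_{Z_L,\lambda}(A,\psi)$ already carried out in the proof of Theorem~\ref{th:main} (via Corollaries~\ref{cor:psi-part}(\ref{cor:psi-part:formula}) and \ref{cor:descent-Neron}(\ref{cor:descent-Neron:deg})) identifies this prefactor with $\pfr_\lambda^{-\chi^{\coh}_{Z_L,\lambda}(A,\psi)}$. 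Converting lengths to characteristic ideals via \eqref{eq:char-ideal} then produces \eqref{eq:main-c-t:formula}.

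The only point that requires genuine care is the length identity displayed above: since $\Hrm^2([\SC_{Z_L,\ell}]_{\psi,\lambda})$ need not vanish here (in contrast with Proposition~\ref{prop:BC}), its contribution must cancel precisely against a piece of $\Hrm^1_{\tors}$. This is a routine diagram chase using additivity of $\lth_{\Ocal_{E,\lambda}}$ along a short exact sequence, but it is the step at which the local volume factors $\lth\bigl([\bigoplus_{w\mid v}\Acal_L(k_w)^\vee]_{\psi,\lambda}\bigr)$ acquire their correct sign in \eqref{eq:main-c-t:BC}.
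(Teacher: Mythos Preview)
Your overall strategy matches the paper's (very terse) proof: exploit that $\ell\nmid|G|$ makes $[-]_{\psi,\lambda}$ exact, feed Proposition~\ref{prop:SC} through it, and then invoke \eqref{eq:main:trivial}. However, your execution of the length bookkeeping contains a compensating pair of errors that you should fix.

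The first error is the identification of the torsion-free part of $\Hrm^1([\SC_{Z_L,\ell}]_{\psi,\lambda})$ with $S_{\tf}\coloneqq[(A(L)\otimes\QQ_\ell/\ZZ_\ell)^\vee]_{\psi,\lambda}$. From $0\to S\to\Hrm^1\to M\to 0$ with $M$ finite one only gets an \emph{inclusion} $S_{\tf}\hookrightarrow\Hrm^1_{\tf}$ of lattices of the same rank; the index $c\coloneqq\lth(\Hrm^1_{\tf}/S_{\tf})$ need not vanish. Consequently your displayed identity is off by $c$: a short snake-lemma chase gives $0\to\Hrm^1_{\tors}/S_{\tors}\to M\to\Hrm^1_{\tf}/S_{\tf}\to 0$, whence $\lth(\Hrm^1_{\tors})=\lth(S_{\tors})+\lth(M)-c$, not $\lth(S_{\tors})+\lth(M)$. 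The second error is that the determinant of $[h_\ell]_{\psi,\lambda}$ computed with respect to bases of $\Hrm^0_{\tf}$ and $\Hrm^1_{\tf}$ differs from $\Reg^\psi_\lambda/|G|^{r_{\alg}(\psi)}$ (which by Definition~\ref{def:psi-ht} uses $S_{\tf}$ as target lattice) by exactly $+c$. So the two mistakes cancel and \eqref{eq:main-c-t:BC} is correct, but the argument as written is not.

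The clean way to avoid this is to bypass the torsion/free decomposition of $\Hrm^1$ entirely and instead use additivity of the Euler characteristic (equivalently, of determinants) along the $5$-term exact sequence: $\det(\Hrm^1)\otimes\det(\Hrm^2)^{-1}\cong\det(S)\otimes\det(V)\otimes\det(T)^{-1}$, and then split only $S$ as $\det(\Sha^\vee_{\psi,\lambda})\otimes\det(S_{\tf})$. This delivers the regulator term directly against $S_{\tf}$ (hence as $\Reg^\psi_\lambda/|G|^{r_{\alg}(\psi)}$ via Remark~\ref{rmk:norm-triv}) with no spurious index term. This is presumably what the paper means by ``immediate from Proposition~\ref{prop:SC} by the exactness of $[-]_{\psi,\lambda}$''. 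Your derivation of \eqref{eq:main-c-t:formula} from \eqref{eq:main-c-t:BC} and \eqref{eq:main:trivial} is fine.
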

\begin{proof}
    The formula \eqref{eq:main-c-t:BC} is immediate from Proposition~\ref{prop:SC} by the exactness of $[-]_{\psi,\lambda}$, and \eqref{eq:main-c-t:formula} follows from \eqref{eq:main:trivial} and \eqref{eq:main-c-t:BC}. Note that if $p$ does not divide $|G|$ then $L/K$ is tame at all places so \eqref{eq:main:trivial} holds for any place $\lambda\mid p$ of $E$.
\end{proof}

\begin{rmk}\label{rmk:main-thm-simplified}
    We can make $\lo_{Z_L}(A,\psi)$ more explicit in some cases; \emph{cf.} Remark~\ref{rmk:ra-psi}. For example, if $L/K$ is either a $p$-extension or unramified everywhere, then $\lo_{Z_L}(A,\psi) = 1$ for any $\psi\in\Ir(G)$. If $L/K$ is cyclic and tame everywhere $A$ has semistable reduction at all places of $K$, then one gets a simpler formula for $\lo_{Z_L}(A,\psi)$. If $A$ has semistable reduction at all places of $L$ but admits non-semistable reduction at some place of $K$ (so $Z_2\ne\emptyset$), then we need to compute $r_{\tilde v,i}$'s as in \eqref{eq:rw} for a preimage $\tilde v\in\pi^{-1}(v)$ of each $v\in Z_2$. In principle, $\lo_{Z_L}(A,\psi)$ should be computable in any explicit examples. 
\end{rmk}

Let us make a few remarks on the formulae in Theorem~\ref{th:main} and Proposition~\ref{prop:main-c-t}.    
\begin{rmk}\label{rmk:vol}
\setcounter{equation}{\value{equation}-1}
\begin{subequations}
    In the proof of Theorem~\ref{th:main} we used the interpretation of $\vol_{Z_1}(A/K)$ in terms of the Euler characteristic of $\Lie(\Acal)(-Z_1)$, so we have
    \begin{multline}\label{rmk:vol:EC}
    \log_{|k|}\big(\vol_{Z_1}(A/K)\big) = \\ \dim_k\chi_k\big(\Lie\Acal(-Z_1)\big)=\dim(A)\cdot \big(1-\gen_K-\deg(Z_1)\big)+\deg(\Lie\Acal),
    \end{multline}
    where $\gen_K$ is the genus of $X$. The same equality holds for $\vol_Z(A/K)$ with $Z$ in place of $Z_1$. If $A$ is an elliptic curve, then we have $\deg(\Lie\Acal) = -\deg(\Delta)/12$ where $\Delta$ is the global discriminant; \emph{cf.} \cite[p~325, eq~(9)]{Tan:RefindBSD}.

    Next, let us show
    \begin{equation}\label{eq:1-BSD:vol}
        \frac{\vol_{Z_1}(A/K)}{{\stackbin[v\in Z_2]{}{\prod}\big|\Lie(\Acal_L)(k_{\tilde v})^{G_{\tilde v}} \big|}} = \bmu\bigg(\frac{\Lie(A)(\AA_K)}{\Lie(A)(K)}\bigg)^{-1}\cdot \prod_{v\in Z}\bmu_v\left(\Acal^\circ_L(\mfr_{\tilde v})^{G_{\tilde v}}\right)
    \end{equation}
    for the Haar measure as in \cite[\S1.6, \S1.7]{KatoTrihan:BSD}; in other words, this expression roughly measures the volume of $\big(\prod_{w\in Z_L}\Acal^\circ_L(\mfr_{w})\big)^G$. 
    In fact, observe that
    \begin{equation}\label{eq:vol:easy}
    \frac{\vol_{Z_1}(A/K)}{\prod_{v\in Z_2}\big|\Lie(\Acal_L)(k_{\tilde v})^{G_{\tilde v}} \big|} = \vol_{Z}(A/K)\prod_{v\in Z_2}\cdot\frac{\big|\Lie(\Acal)(k_v) \big|}{\big|\Lie(\Acal_L)(k_{\tilde v})^{G_{\tilde v}} \big|}.
    \end{equation}
    Now, for any place $v$ of $K$ where $L/K$ is tamely ramified at worst,  Proposition~\ref{prop:Unip-Rad-Descent} yields
    \begin{equation}\label{eq:1-BSD:index}
           \frac{\big|\Lie(\Acal)(k_v) \big|}{\big|\Lie(\Acal_L)(k_{\tilde v})^{G_{\tilde v}} \big|} = \big|\big(\Lie(\Frm^1\Bcal_{k_v})(k_v)\big)^{G_{\tilde v}}\big| = \big|\big(\Frm^1\Bcal_{k_v}(k_v)\big)^{G_{\tilde v}}\big| = \frac{\big|\Acal(k_v) \big|}{\big|\big(\Acal_L(k_{\tilde v})\big)^{G_{\tilde v}} \big|},
   \end{equation}
   where the first and last equalities follow from the short exact sequences induced by \eqref{eq:Unip-Rad-Descent:Group} on the Lie algebras and $k_v$-rational points respectively, and the second equality holds since $(\Frm^1\Bcal_{k_v})^{G_{\tilde v}}$ is a vector group; \emph{cf.} Proposition~\ref{prop:Unip-Rad-Descent}. Note that \eqref{eq:1-BSD:index} applies to any $v\in Z_2$ in the setting where the formula \eqref{eq:main:formula} or \eqref{eq:main-c-t:formula} can be applied. If $v\notin Z_2$ then the left-most ratio in \eqref{eq:1-BSD:index} is equal to $1$ by Proposition~\ref{prop:descent-conn-Neron-models}. From this observation together with \eqref{eq:vol:easy} and \eqref{eq:1-BSD:index}, we get the following equality
      \begin{align}\label{eq:vol:formula}
       \vol_Z(A/K)\cdot\prod_{v\in Z}\big|\Acal(k_v) \big| &=  \frac{\vol_{Z_1}(A/K)}{\stackbin[v\in Z_2]{}{\prod}\big|\Lie(\Acal_L)(k_{\tilde v})^{G_{\tilde v}} \big|}\cdot \prod_{v\in Z_1}\big|\Acal(k_v) \big|\cdot\prod_{v\in Z_2}\big|\Acal_L(k_{\tilde v})^{G_{\tilde v}} \big| \\
       &=   \frac{\vol_{Z_1}(A/K)}{\stackbin[v\in Z_2]{}{\prod}\big|\Lie(\Acal_L)(k_{\tilde v})^{G_{\tilde v}} \big|}\cdot \prod_{v\in Z} \left|\frac{A(K_v)}{\Acal^\circ_L(\mfr_{\tilde v})^{G_{\tilde v}}} \right|; \notag
   \end{align}
   To see the last equality note that $\Acal^\circ_L(\mfr_{\tilde v})^{G_{\tilde v}} = \Acal(\mfr_v)$ for $v\in Z_1$, as $\Acal^\circ_{L,\Ocal_{\tilde v}}$ is the base change of $\Acal^\circ_{\Ocal_v}$. Now the formula \eqref{eq:1-BSD:vol} follows.
    \end{subequations}
\end{rmk}

\begin{rmk}\label{rmk:1-BSD}
\setcounter{equation}{\value{equation}-1}
\begin{subequations}
    Let us compare the formula \eqref{eq:main:formula} for $\Lscr_U(A,\triv_G)\cdot\ZZ_p$ (with $\triv_G$ denoting the trivial character of $G$)
    with the $p$-part of the classical BSD formula \cite[(1.8.1)]{KatoTrihan:BSD} when $p$ satisfies Assumption~\ref{ass:BC} for $(A,L/K,Z_L)$. Since for any $P\in A(K)$ and $P\in A^t(K)$ we have 
    \begin{equation}
        \langle P, \check P\rangle_{A/L} = |G|\cdot\langle P, \check P\rangle_{A/K},
    \end{equation}
     the discriminant of $\langle\,,\rangle_{A/K}$ coincides with $\Reg^{\triv}_p/|G|^{r_{\alg}(\triv)}$ up to $\ZZ_p^\times$-multiple.  
    
    Recall that $[\SC_{Z_L,p}(A,L/K)]_{\triv_G,p}\cong \big(\SC_{Z_L,p}(A,L/K)\big)_G$, and we have the following distinguished triangle
    \begin{equation}\label{eq:1-BSD:SC-descent}
        \begin{tikzcd}[column sep = small]
    \SC_{Z,p}(A, K/K) \arrow[r] & \big(\SC_{Z_L,p}(A,L/K)\big)_G \arrow{r} &  \bigoplus_{v\in Z} \left(\frac{\Acal_L^\circ(\mfr_{\tilde v})^{G_{\tilde v}}}{\Acal^\circ(\mfr_v)}\right)^\vee{[-1]} \arrow{r} & +1        
    \end{tikzcd},
    \end{equation}
    where we choose $\tilde v\in\pi^{-1}(v)$ for each $v\in Z$. In fact, by \cite[Lemma~6.1]{KatoTrihan:BSD} and its proof $\big(\big(\SC_{Z_L,p}(A,L/K)\big)_G\big)^\vee[-2]$ is the mapping fibre of
    \[
        \begin{tikzcd}[column sep = small, font = \small]
        \RGamma_{\fl}(U, \Acal{[p^\infty]})\oplus \left( \stackbin[v\in Z]{}{\bigoplus} \Acal_L^\circ(\mfr_{\tilde v})^{G_{\tilde v}}\stackbin{\Lrm}{\otimes}\QQ_p/\ZZ_p\right)[-1] \arrow{r}& \stackbin[v\in Z]{}{\bigoplus}  \RGamma_{\fl} (\Spec K_v, \Acal{[p^\infty]})
    \end{tikzcd},
    \]
    so by comparing it with the $p$-primary part of \eqref{eq:arith-coho} for $L=K$ we obtain \eqref{eq:1-BSD:SC-descent}.

    Next we turn to the index $[\Acal^\circ_L(\mfr_{\tilde v})^{G_{\tilde v}}:\Acal^\circ(\mfr_v)]$ for each $v\in Z$. If $v\in Z_1$ then this index is $1$ as explained below \eqref{eq:vol:formula}. Since $L/K$ is tame at all places $v\in Z_2$ in the setting where the formula \eqref{eq:main:formula} or \eqref{eq:main-c-t:formula} can be applied, we have $\frac{\Acal_L^\circ(\mfr_{\tilde v})^{G_{\tilde v}}}{\Acal^\circ(\mfr_v)} \cong \Frm^1\Bcal_{k_v}(k_v)^{G_{\tilde v}}$  for $v\in Z_2$ by the short exact sequence 
    \[
    \begin{tikzcd}[column sep = scriptsize]
        0 \arrow{r} & \Frm^1\Bcal_{k_v}(k_v)^{G_{\tilde v}} \arrow{r} & \Acal(k_v)\arrow{r} & \Acal_L(k_{\tilde v})^{G_{\tilde v}} \arrow{r} &0
    \end{tikzcd}
    \]
    induced by $k_v$-points of \eqref{eq:Unip-Rad-Descent:Group}. Therefore we get
    \begin{equation}
        \prod_{v\in Z}[\Acal^\circ_L(\mfr_{\tilde v})^{G_{\tilde v}}:\Acal^\circ(\mfr_v)] = \prod_{v\in Z_2}[\Acal(k_v):\Acal_L(k_{\tilde v})^{G_{\tilde v}}].
    \end{equation}
    From this together with \eqref{eq:1-BSD:SC-descent} and \eqref{eq:vol:formula}, the formula \eqref{eq:main:formula} for $\Lscr_U(A,\triv_G)\cdot\ZZ_p$ can be reduced to the $p$-part of the the classical BSD formula \cite[(1.8.1)]{KatoTrihan:BSD}. 
\end{subequations}
\end{rmk}


\begin{rmk}\label{rmk:necessity-weak-ram}
    This remark is a continuation of Remark~\ref{rmk:HS-spec}.
    Even if $L/K$ is \emph{not} weakly ramified at some place, one can still obtain a formula for $v_\lambda\big(\Lscr_U(A,\psi)\big)$ at $\lambda\mid p$ analogous to Corollary~\ref{cor:BKK}, at the cost of replacing $\big(\Acal^\circ_L(\mfr_w)\big)_{w\in Z_L}$ with some (usually inexplicit) family $G_w$-stable open compact subgroups $V_L\coloneqq (V_w)_{w\in Z_L}$ where each $V_w$ as ``cohomologically trivial'' $G_w$-action. But then, it would be quite unlikely that $\widehat\SC_{V_L}\otimes_{\widehat\ZZ}\ZZ_p$, introduced in Remark~\ref{rmk:c-t-SC}, can be represented by a perfect $2$-term complex.
    Indeed, at any place $w\in Z_L$ where $\Lie\Acal_L(\mfr_w)$ is not cohomologically trivial for $G_w$ (with $v=\pi(w)$), we should choose $V_w$ to be a proper subgroup of $\Acal_L^\circ(\mfr_w)$, and $\Acal^\circ_L(\mfr_w)/V_w$ is \emph{not} cohomologically trivial as a $\ZZ_p[G_w]$-module; \emph{cf.} \cite[Lemma~3.4, Proposition~3.7]{BurnsKakdeKim:EquivBSDTame}. In particular, it is difficult to control 
    \[\rho^\psi_\lambda\big(\chi^G_p(\widehat\SC_{V_L}\otimes_{\widehat\ZZ}\ZZ_p,h_p)\big),\]
    as $A(L_w)/V_w$ is neither cohomologically trivial for $G_w$ nor prime to $p$. One should also note that the formula for $v_\lambda(\Lscr_U(A,\psi))$ also involves the $G$-equivariant Euler characteristic of some (usually inexplicit) proper $G$-stable subbundle of $\Lie(\Acal_L)(-Z_L)$.
\end{rmk}

\section{Examples}\label{sec:exa}
In Theorem~\ref{th:main} we computed the $\ell$-part of the normalised leading term $\Lscr_U(A,\psi)$ under Assumption~\ref{ass:BC}, which was imposed to simplify the homological algebra especially when $\ell$ divides $|G|$. 
The main novelty lies in obtaining the $p$-part of the normalised leading term when $p$ divides $|G|$. We also have to assume the finiteness of $\Sha(A/L)$, which is still wide open in the general setting.

In this section, we  present examples of $(A,L/K,Z_L)$ that satisfy Assumption~\ref{ass:BC} for $\ell = p$. For all but the last example, $\Sha(A/L)$ is known to be finite so Theorem~\ref{th:main} can be applied unconditionally.  
\begin{exa}
    Suppose that $L/K$ is weakly ramified everywhere, and $A$ is a \emph{constant} abelian variety over $K$; that is, there is an abelian variety $A_0$ over a finite subfield $k_0$ of $K$ such that $A = A_0\times_{\Spec k_0}\Spec K$. Then, $\Sha(A/L)$ is finite by \cite{Milne:Sha} and Assumption~\ref{ass:BC}(\ref{ass:BC:p}) is automatic for $\ell = p$. Since the torsion points of $A(L)$ and $A^t(L)$ are defined over a finite subfield of $L$, to check Assumption~\ref{ass:BC} for $\ell = p$ it suffices to show that there is no place $w\in Z_L$ where $k_w$ contains the field of definition of any non-trivial point in $A_0[p](\bar k_0)$. In particular, Assumption~\ref{ass:BC} holds for $\ell = p$ (with any $Z_L$) if $A$ is a constant supersingular abelian variety.
\end{exa}

Let us now focus on the case where $A$ is a non-constant elliptic curve over $K$. Observe that Assumption~\ref{ass:BC} can be check locally at places $w\in Z_L$ \emph{except} Assumption~\ref{ass:BC}(\ref{ass:BC:MW}), which is on the $\ell$-torsion of the Mordell--Weil groups of $A$ and $A^t$. Let us give a convenient sufficient condition for Assumption~\ref{ass:BC}(\ref{ass:BC:MW}) for $\ell = p$.
\begin{lem}\label{lem:ass-MW}
    Let $A$ be an \emph{ordinary} elliptic curve over $L$. If there is a non-trivial $p$-torsion point of $A$ defined over a finite \emph{separable} extension of $L$, then $A$ can be defined over $L^p$ so the $j$-invariant of $A$ lies in $L^p$. In particular, if the $j$-invariant of $A$ does not lie in $L^p$ then $A(L)$ has no non-trivial $p$-torsion.
\end{lem}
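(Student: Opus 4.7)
The plan is to use the non-trivial $p$-torsion point to produce an $L$-rational \'etale subgroup scheme of order $p$ in $A$, form the corresponding quotient isogeny, and identify its dual with the relative Frobenius via the uniqueness of the connected order-$p$ subgroup scheme in an ordinary elliptic curve. Since $A$ is ordinary, the connected-\'etale sequence of finite flat $L$-group schemes
\[
0 \to A[p]^\circ \to A[p] \to A[p]^{\et} \to 0
\]
has $A[p]^\circ \cong \mu_p$ and $A[p]^{\et}$ \'etale of order $p$. Because $\mu_p$ has only the trivial section over any field of characteristic $p$, every non-trivial $p$-torsion point of $A$ (over any field extension of $L$) is automatically a point of $A[p]^{\et}$, which justifies focusing on the \'etale part.

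First I would show that $A[p]^{\et}$ is the constant group scheme $\ZZ/p$ over $L$. Indeed, the Galois action on $A[p]^{\et}(L^{\sep}) \cong \ZZ/p$ factors through a character $\chi \colon \Gal(L^{\sep}/L) \to \Aut(\ZZ/p) = (\ZZ/p)^\times$, and $(\ZZ/p)^\times$ acts freely on the non-zero elements of $\ZZ/p$; as the hypothesis supplies a non-zero Galois-fixed element, $\chi$ must be trivial. This yields an $L$-rational subgroup scheme $C := A[p]^{\et} \subset A$ of order $p$ with \'etale fibres.

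Next I would form the quotient $\phi \colon A \to A' := A/C$, an isogeny of degree $p$ over $L$ with \'etale kernel. By Cartier duality, the kernel of the dual isogeny $\hat\phi \colon A' \to A$ is the Cartier dual of $C \cong \ZZ/p$, namely $\mu_p$, which is infinitesimal. Since $A'$ is $L$-isogenous to the ordinary curve $A$ it is also ordinary, and so $A'[p]^\circ \cong \mu_p$ is the \emph{unique} closed subgroup scheme of order $p$ in $A'$ with connected fibres. Consequently $\ker\hat\phi = A'[p]^\circ$, and $\hat\phi$ coincides with the relative Frobenius $F_{A'/L} \colon A' \to {A'}^{(p)}$ up to post-composition with an $L$-isomorphism; in particular $A \simeq {A'}^{(p)}$ as elliptic curves over $L$.

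Finally, the Frobenius twist ${A'}^{(p)}$ descends to $L^p$: choosing a model for $A'$ cut out by equations with coefficients in $L$, the curve ${A'}^{(p)}$ is cut out by the same equations with every coefficient raised to the $p$-th power, hence has coefficients in $L^p$. Therefore $A \simeq {A'}^{(p)}$ is $L$-isomorphic to an elliptic curve defined over $L^p$, and the $j$-invariant satisfies $j(A) = j({A'}^{(p)}) = j(A')^p \in L^p$; the ``In particular'' assertion then follows by contraposition. The delicate step is the identification of $\hat\phi$ with the relative Frobenius, which rests on the classification of order-$p$ subgroup schemes of an ordinary elliptic curve.
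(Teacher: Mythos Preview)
Your overall strategy matches the paper's: produce an \'etale order-$p$ subgroup $C \subset A[p]$ over $L$ and identify the complementary degree-$p$ isogeny with a relative Frobenius. Once $C$ is in hand, your dual-isogeny argument is correct and coincides with the paper's factorisation $[p] = \sigma_B \circ \rho^{\et}$.

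The gap is in obtaining $C$. You argue that $\chi$ is trivial and then write ``This yields an $L$-rational subgroup scheme $C := A[p]^{\et} \subset A$'', but $A[p]^{\et}$ is a \emph{quotient} of $A[p]$, and knowing it is constant does not make it a subgroup: over the non-perfect field $L$ the sequence $0 \to \mu_p \to A[p] \to \ZZ/p \to 0$ need not split (the obstruction lies in $L^\times/(L^\times)^p$). Indeed, if this step were automatic your argument would apply to every ordinary elliptic curve over $L$ and force $j(A) \in L^p$ always, which is false. (Separately, the hypothesised point lies over a finite separable $L'/L$ and is only $\Gal(L^{\sep}/L')$-fixed, so $\chi$ need not be trivial either---but triviality of $\chi$ is not what is actually needed.) The genuine content of the hypothesis is scheme-theoretic: the non-identity connected components of $A[p]$ are $\mu_p$-torsors over the components of $A[p]^{\et}$ and a priori have purely inseparable residue fields; a non-trivial $P \in A[p](L')$ with $L'/L$ separable forces these torsors to be trivial, so $P$ and its multiples cut out an \'etale subgroup of $A[p]_{L'}$, which is Galois-stable (as $A[p](\bar L) \cong \ZZ/p$) and hence descends to the required $C$ over $L$. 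This is what the paper's terse ``one can split the connected-\'etale sequence'' encodes.
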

\begin{proof}
    If there is a non-trivial $p$-torsion point of $A$ defined over a separable extension of $L$, then one can split the connected-\'etale sequence $0\to A[p]^\circ\to A[p]\to A[p]^{\et} \to 0$, which in turn enables one to factorise $[p]\colon A\to A$ as
    \[\begin{tikzcd}
        A \arrow[r,"\rho^{\et}"] & B \arrow[r, "\sigma_B"] & A
    \end{tikzcd},\]
    where $\rho^{\et}$ is a degree-$p$ \'etale isogeny and $\sigma_B$ is a degree-$p$ purely inseparable isogeny. Therefore, $\sigma_B$ can be identified with the Frobenius isogeny $B\to B^{(p)}\cong A$, so $A$ can be defined over $L^p$.
\end{proof}

\begin{rmk}
    Indeed, the converse of Lemma~\ref{lem:ass-MW} also holds since the connected-\'etale sequence for $A[p]$ splits after the Frobenius pullback. We do not need this property.
\end{rmk}

\begin{exa}\label{exa:Ulmer}
\setcounter{equation}{\value{equation}-1}
\begin{subequations}
    Let $L = \FF_q(t)$ be a rational function field of characteristic $p>3$, and let $A$ be an \emph{Ulmer elliptic curve}; i.e., the elliptic curve over $L$ given by the following equation:
    \begin{equation}\label{eq:Ulmer}
        y^2+xy = x^3 - t^{d}
    \end{equation}
    for some $d$ coprime to $p$. This elliptic curve has been studied by Ulmer \cite{Ulmer:EllCurveLargeRank}; namely, he showed that $\Sha(A/L)$ is finite \cite[Proposition~6.4]{Ulmer:EllCurveLargeRank} and computed the rank of $A(L)$ \cite[Theorems~1.5, 9.2]{Ulmer:EllCurveLargeRank}.
    
    Since $A$ is defined over $\FF_p(t^d)$, we can choose an intermediate extension $K = \FF_{q'}(t^{d'})$ so that $L/K$ is Galois. (For example, we choose $q'$ and $q$ large enough so that $e\coloneqq d/d'$ divides $q'-1$.) Let $w_0$ and $w_\infty$ respectively denote the places of $L$ corresponding to $t=0$ and $t=\infty$, and write $v_0=\pi(w_0)$ and $v_\infty = \pi(w_\infty)$. Then $L/K$ is unramified away from $\{v_0,v_\infty\}$, and admits tame ramification at worst. We now show that Assumption~\ref{ass:BC} for $\ell = p$ is satisfied for \emph{many} Ulmer elliptic curves $A$ and $L/K$ with ``minimal'' choice of $Z_L$. 
    
    Let us first recall various invariants and local properties of $A/L$, following \cite[\S2]{Ulmer:EllCurveLargeRank}. The discriminant of the model \eqref{eq:Ulmer} is 
    \begin{equation}
        \Delta\coloneqq t^d(1-2^43^3t^d)
    \end{equation}
    and at each place dividing $\Delta$ the reduction of $A$ is semistable with prime-to-$p$ local Tamagawa number (as $p>3$). Lastly, $A$ has good reduction at $w_\infty$ if $6\mid d$, and additive reduction at $w_\infty$ otherwise.

    Since the $j$-invariant of $A$ is $1/\Delta \notin L^p$, Assumption~\ref{ass:BC}(\ref{ass:BC:MW}) is satisfied for $\ell = p$ by Lemma~\ref{lem:ass-MW}. From now on, assume $d \mid 6$ so that $A$ has semistable reduction at all places of $L$, and hence Assumption~\ref{ass:BC}(\ref{ass:BC:p}) is satisfied. Let $Z_L$ be the union of the zeroes of $\Delta$ and $w_\infty$, which is the \emph{minimal} choice if $d'\ne d$. Since the local Tamagawa number at each place of $L$ is prime to $p$, to verify Assumption~\ref{ass:BC}(\ref{ass:BC:loc-vol}) for $\ell = p$ it remains to show that $\Acal_{L}(k_{w_\infty})$ is $p$-torsion free (\emph{cf.} Lemma~\ref{lem:ass}). Indeed, the fibre of $\Acal_L$ at $w_\infty$ is given by the following Weierstra\ss{} equation
    \begin{equation}\label{eq:Ulmer:infty}
       y^2 = x^3-1,
    \end{equation}
    which is the mod~$p$ reduction of an elliptic curve over $\QQ$ with complex multiplication by $\QQ(\sqrt{-3})$. In particular,  $\Acal_{L,k_{w_\infty}}$ is supersingular if $p\equiv 2 \bmod 3$ (and $p>3$), in which case $\Acal_{L}(\overline k_{w_\infty})$ is trivial.

    To summarise, suppose that $p>3$ and $p\equiv 2\bmod 3$. Let $A$ be an elliptic curve defined by the equation \eqref{eq:Ulmer} with $6\mid d$ and $p\nmid d$. We choose a finite extension $\FF_q/\FF_{q'}$ of finite fields of characteristic $p$ and a positive integer  $e$ dividing $\gcd(d,q'-1)$, and set $L\coloneqq \FF_q(t)$ and $K\coloneqq \FF_{q'}(t^{\frac{d}{e}})$. (We can arrange so that $p$ divides $[L:K]$ by manipulating $\FF_q/\FF_{q'}$.) Let $Z_L$ be the disjoint union of the zeroes of $\Delta\coloneqq t^d(1-2^43^3t^d)$ and $w_\infty$. For such $(A,L/K,Z_L)$, Assumption~\ref{ass:BC} holds for $\ell = p$. We also note that $\Sha(A/K)$ is finite, and we can arrange so that $A(L)$ has large rank by the work of Ulmer \cite{Ulmer:EllCurveLargeRank}.
\end{subequations}
\end{exa}

\begin{rmk}
    Let $\Acal_\infty$ be the elliptic curve over $\ZZ[1/6]$ defined by the equation \eqref{eq:Ulmer:infty}. We want to show that there is no non-trivial $p$-torsion in $\Acal_\infty(\FF_p)$ for any $p\geqslant 5$. If $p\equiv 2\bmod 3$, then we already observed in Example~\ref{exa:Ulmer} that $\Acal_\infty$ has supersingular good reduction so the assertion is obvious. So we may assume that $p\equiv 1\bmod 3$, in which case $\Acal_\infty$ has good \emph{ordinary} reduction at $p$. Then there exists $\omega\in\FF_p^\times\setminus\{1\}$ such that $\omega^3=1$. Now over $\FF_p$, we can rewrite \eqref{eq:Ulmer:infty} as follows
    \[y^2 = (x-1)(x-\omega)(x-\omega^2), \]
    so $\Acal_\infty(\FF_p)[2]$ has order $4$. Now suppose by contrary that  $\Acal_\infty(\FF_p)[p]$ is non-trivial, so $4p$ divides $|\Acal_\infty(\FF_p)|$. Then we have
    \[
    1 + p - |\Acal_\infty(\FF_p)| \leqslant 1 - 3p,
    \]
    which clearly violates the Weil bound. Therefore,  $\Acal_\infty(\FF_p)[p]$ should be trivial.

    Let $p$ be a prime satisfying $p\equiv 1\bmod 3$. Then we just showed that the field of definition of the $p$-torsion points of $\Acal_{\infty,\FF_p}$ is a non-trivial extension of $\FF_p$. Returning to the setting of Example~\ref{exa:Ulmer}, choose a positive integer $d$ such that $p\nmid d$ and $6\mid d$. Set $L=\FF_q(t)$ for some finite field $\FF_q/\FF_p$ and consider $A$ and $L/K$ as in Example~\ref{exa:Ulmer}. Then $(A,L/K,Z_L)$ as in Example~\ref{exa:Ulmer} satisfies Assumption~\ref{ass:BC} for $\ell = p$ provided that 
    $[k_0:\FF_p]$ does not divide $[\FF_q:\FF_p]$. Since $[k_0:\FF_p]$ divides $p-1$, one can still produce examples where $p$ divides $[L:K]$ and $p\equiv 1\bmod 3$.
\end{rmk}

\begin{exa}
    Let us now give an example of a  non-constant elliptic curve $A/K$ and a finite Galois extension $L/K$ with \emph{wild} ramification where Assumption~\ref{ass:BC} holds for $\ell = p$.
    Let $K=\FF_q(t)$ be a rational function field of characteristic of characteristic $p$, and consider an Artin--Schreier extension $L\coloneqq K(u)$ with $u^p-u = t$. Then $L/K$ is a cyclic extension of degree $p$ ramified only at the place $ v_\infty$ corresponding to $t=\infty$. For the unique place $w_\infty$ above $v_\infty$ one can check that $L_{w_\infty}/K_{v_\infty}$ is weakly wildly ramified.

    Let $A$ be an elliptic curve defined by the equation \eqref{eq:Ulmer} with $6\mid d$ and $p\nmid d$, and suppose that $p>3$ with $p\equiv 2\bmod 3$. Let $Z_L$ be the disjoint union of the bad reduction places for $A$ and $\{w_\infty\}$. Let us now verify Assumption~\ref{ass:BC}  for $(A,L/K,Z_L)$ and $\ell = p$, using the properties of $A/K$ obtained in Example~\ref{exa:Ulmer}.

    Since $A$ has semistable reduction at all places of $K$ and $L/K$ is weakly ramified everywhere, Assumption~\ref{ass:BC}(\ref{ass:BC:p}) holds. Assumption~\ref{ass:BC}(\ref{ass:BC:MW}) follows from Lemma~\ref{lem:ass-MW} since the $j$-invariant of $A/K$ does not lie in $K^p$. Since $A$ has \emph{good supersingular reduction} at $v_\infty$, $\Acal_L(k_{w_\infty})$ is trivial. 
    And since the fibre of $\Acal$ at each bad reduction place for $A/K$ is either N\'eron $d$-gon (at $t=0$) or of type $I_1$ (away of $t=0$), each of its components is rationally defined and the local Tamagawa number remains prime to $p$ under any unramified extension; \emph{cf.} \cite[\S2.2]{Ulmer:EllCurveLargeRank}. By Lemma~\ref{lem:ass}, it follows that $\Acal_L(k_w)$ has no non-trivial $p$-torsion for any $w\in Z_L$, verifying Assumption~\ref{ass:BC}(\ref{ass:BC:loc-vol}). We are not able to check if $\Sha(A/L)$ is finite.
\end{exa}

\subsection*{Acknowledgement} We thank David~Burns and Daniel~Macias~Castillo for explaining their result \cite{BurnsMaciasCastillo:EquivBSD} to us and clarified on the $\psi$-twisted regulators. The first named author (WK) was supported by the National Research Foundation of Korea(NRF) grant funded by the Korea government(MSIT). (No. RS-2023-00208018). The third named author (FT) was supported by the Japanese Society for Promotion of Sciences (JSPS) (Research grant C/21K03186).

\bibliographystyle{amsalpha}
\bibliography{bib}
\end{document}